\documentclass{amsart}
\usepackage{amssymb}
\usepackage{amsmath, amscd}
\usepackage{amsthm}
\usepackage{perpage}
\usepackage[all,cmtip]{xy}
\usepackage{setspace}
\usepackage{amsbsy}
\usepackage[foot]{amsaddr}
\usepackage{url}
\usepackage[T1]{fontenc}
\usepackage{verbatim}
\usepackage{mathrsfs}
\usepackage{ifthen}
\usepackage{blkarray}
\usepackage{multirow}
\usepackage{enumerate}
\usepackage{tikz-cd}
\usepackage{mathtools}
\usepackage[left=2cm,head=13pt,
top=1.5cm,right=2cm, bottom=1.5cm]{geometry}

\AtEndDocument{\bigskip{\footnotesize%
\noindent (Wushi Goldring)
  \textsc{Department of Mathematics, Stockholm University, Stockholm SE-10691, Sweden} \par  
 \noindent \texttt{wushijig@gmail.com} \par
  \addvspace{\medskipamount}
\noindent (Jean-Stefan Koskivirta) \textsc{Department of Mathematics, South Kensington Campus, Imperial College London, London SW7 2AZ, UK} \par 
\noindent \texttt{jeanstefan.koskivirta@gmail.com}
}}

\numberwithin{equation}{subsection}

\newtheorem{theorem}{Theorem}[subsection]
\newtheorem{lemma}[theorem]{Lemma}

\newtheorem{example}[theorem]{Example}
\newtheorem{corollary}[theorem]{Corollary}
\newtheorem{definition}[theorem]{Definition}

\newtheorem{proposition}[theorem]{Proposition}
\newtheorem{condition}[theorem]{Condition}

\newtheorem{thmi}{Theorem}

\newtheorem{cor}[theorem]{Corollary}
\newtheorem*{cor1}{Corollary 1}

\newtheorem*{ques}{Question}

\theoremstyle{remark}

\newtheorem{rmk}[theorem]{Remark}

\title[Zips and flags]{Stratifications of flag spaces and functoriality}

\newcommand{\GZip}{\mathop{\text{$G$-{\tt Zip}}}\nolimits}
\newcommand{\GpZip}{\mathop{\text{$G'$-{\tt Zip}}}\nolimits}

\newcommand{\GoneZip}{\mathop{\text{$G_1$-{\tt Zip}}}\nolimits}
\newcommand{\GtwoZip}{\mathop{\text{$G_2$-{\tt Zip}}}\nolimits}

\newcommand{\GtiloneZip}{\mathop{\text{$\tilde{G}_1$-{\tt Zip}}}\nolimits}
\newcommand{\GLsixZip}{\mathop{\text{$GL_6$-{\tt Zip}}}\nolimits}

\newcommand{\GF}{\mathop{\text{$G$-{\tt ZipFlag}}}\nolimits}
\newcommand{\GpF}{\mathop{\text{$G'$-{\tt ZipFlag}}}\nolimits}

\DeclareMathOperator{\Gal}{Gal}

\newskip\procskipamount
\procskipamount=6pt plus1pt minus1pt

\newskip\interskipamount
\interskipamount=12pt plus1pt minus1pt

\newskip\refskipamount
\refskipamount=6pt plus1pt minus1pt

\newcommand{\procskip}{\vskip\procskipamount}
\newcommand{\interskip}{\vskip\interskipamount}
\newcommand{\refskip}{\vskip\refskipamount}

\newcommand{\procbreak}{\par
   \ifdim\lastskip<\procskipamount\removelastskip
   \penalty-100
   \procskip\fi
   \noindent\ignorespaces}

\newcommand{\titlebreak}{\par%
\ifdim\lastskip<\interskipamount\removelastskip%
\penalty10000%
\interskip\fi%
\noindent}%

\newcommand{\interbreak}{\par%
\ifdim\lastskip<\interskipamount\removelastskip%
\penalty-100%
\interskip\fi%
\noindent\ignorespaces}%

\newcommand{\refbreak}{\par%
\ifdim\lastskip<\refskipamount\removelastskip%
\penalty-100%
\refskip\fi%
\noindent\ignorespaces}%

\newcounter{listcounter}
\newcounter{deflistcounter}
\newcounter{equivcounter}

\newskip{\itemsepamount}
\itemsepamount=0pt plus1pt minus0pt

\newskip{\topsepamount}
\topsepamount=0pt plus2pt minus0pt


\newenvironment{assertionlist}{%
  \begin{list}
    {\upshape (\arabic{listcounter})}
    {\setlength{\leftmargin}{18pt}
     \setlength{\rightmargin}{0pt}
     \setlength{\itemindent}{0pt}
     \setlength{\labelsep}{5pt}
     \setlength{\labelwidth}{13pt}
     \setlength{\listparindent}{\parindent}
     \setlength{\parsep}{0pt}
     \setlength{\itemsep}{\itemsepamount}
     \setlength{\topsep}{\topsepamount}
     \usecounter{listcounter}}}
  {\end{list}}



\newenvironment{definitionlist}{%
  \begin{list}
    {\upshape (\alph{deflistcounter})}
    {\setlength{\leftmargin}{18pt}
     \setlength{\rightmargin}{0pt}
     \setlength{\itemindent}{0pt}
     \setlength{\labelsep}{5pt}
     \setlength{\labelwidth}{13pt}
     \setlength{\listparindent}{\parindent}
     \setlength{\parsep}{0pt}
     \setlength{\itemsep}{\itemsepamount}
     \setlength{\topsep}{\topsepamount}
     \usecounter{deflistcounter}}}
  {\end{list}}


%


\newenvironment{equivlist}{%
  \begin{list}
    {\upshape (\roman{equivcounter})}
    {\setlength{\leftmargin}{18pt}
     \setlength{\rightmargin}{0pt}
     \setlength{\itemindent}{0pt}
     \setlength{\labelsep}{5pt}
     \setlength{\labelwidth}{13pt}
     \setlength{\listparindent}{\parindent}
     \setlength{\parsep}{0pt}
     \setlength{\itemsep}{\itemsepamount}
     \setlength{\topsep}{\topsepamount}
     \usecounter{equivcounter}}}
  {\end{list}}






\newcommand{\Acal}{{\mathcal A}}

\newcommand{\Ccal}{{\mathcal C}}
\newcommand{\Dcal}{{\mathcal D}}

\newcommand{\Fcal}{{\mathcal F}}
\newcommand{\Gcal}{{\mathcal G}}

\newcommand{\Ical}{{\mathcal I}}
\newcommand{\Jcal}{{\mathcal J}}

\newcommand{\Lcal}{{\mathcal L}}

\newcommand{\Ocal}{{\mathcal O}}
\newcommand{\Pcal}{{\mathcal P}}

\newcommand{\Rcal}{{\mathcal R}}
\newcommand{\Scal}{{\mathcal S}}

\newcommand{\Ucal}{{\mathcal U}}
\newcommand{\Vcal}{{\mathcal V}}
\newcommand{\Wcal}{{\mathcal W}}
\newcommand{\Xcal}{{\mathcal X}}

\newcommand{\Zcal}{{\mathcal Z}}

\newcommand{\gfr}{{\mathfrak g}}

\newcommand{\pfr}{{\mathfrak p}}

\newcommand{\Pfr}{{\mathfrak P}}

\newcommand{\Sfr}{{\mathfrak S}}

\renewcommand{\AA}{\mathbf{A}}

\newcommand{\CC}{\mathbf{C}}

\newcommand{\FF}{\mathbf{F}}
\newcommand{\GG}{\mathbf{G}}
\newcommand{\HH}{\mathbf{H}}

\newcommand{\NN}{\mathbf{N}}

\newcommand{\PP}{\mathbf{P}}
\newcommand{\QQ}{\mathbf{Q}}
\newcommand{\RR}{\mathbf{R}}
\renewcommand{\SS}{\mathbf{S}}

\newcommand{\ZZ}{\mathbf{Z}}

\DeclareMathOperator{\Pic}{Pic}

\newcommand{\Ascr}{{\mathscr A}}

\newcommand{\Dscr}{{\mathscr D}}

\newcommand{\Lscr}{{\mathscr L}}

\newcommand{\Sscr}{{\mathscr S}}

\newcommand{\Vscr}{{\mathscr V}}


\newcommand{\fp}{\mathbf F_p}








%
%

\newcommand{\stab}{{\rm Stab}}

\newcommand{\Ad}{\mbox{Ad }}





%


%


\newcommand{\Th}{{\rm Th.}}
\newcommand{\Ths}{{\rm Ths.}}
\newcommand{\Rmk}{{\rm Rmk.}}

\newcommand{\Cor}{{\rm Cor.}}

\newcommand{\Lem}{{\rm Lem.}}

\newcommand{\Chap}{{\rm Chap.}}

\newcommand{\Def}{{\rm Def.}}

\newcommand{\Prop}{{\rm Prop.}}

\newcommand{\loccit}{{\em loc.\ cit. }}
\newcommand{\loccitn}{{\em loc.\ cit.}}
\newcommand{\cf}{{\em cf. }}

\newcommand{\diag}{{\rm diag}}


\newcommand{\Shk}{\Sscr_{K}}



\DeclareMathOperator{\Lie}{Lie}

\DeclareMathOperator{\Int}{int}
\DeclareMathOperator{\adj}{ad}

\renewcommand{\div}{{\rm div}}

\DeclareMathOperator{\Ker}{Ker}

\newcommand{\GFZP}{\GF^{(\Zcal, P_0)}}

\author{Wushi Goldring, Jean-Stefan Koskivirta}
\pagestyle{plain}
\date{September 11, 2017}

\begin{document}

\begin{abstract}
We define stacks of zip flags, which form towers above the stack of $G$-zips of Moonen, Pink, Wedhorn and Ziegler in \cite{Moonen-Wedhorn-Discrete-Invariants}, \cite{Pink-Wedhorn-Ziegler-zip-data} and \cite{PinkWedhornZiegler-F-Zips-additional-structure}. 
A stratification is defined on the stack of zip flags, and principal purity is established under a mild assumption on the underlying prime $p$. We generalize flag spaces of Ekedahl-Van der Geer \cite{EkedahlGeerEO} and relate them to stacks of zip flags. For large $p$, it is shown that strata are affine. We prove that morphisms with central kernel between stacks of G-zips have discrete fibers. This allows us to prove principal purity of the zip stratification for maximal zip data. The latter provides a new proof of the existence of Hasse invariants for Ekedahl-Oort strata of good reduction Shimura varieties of Hodge-type, first proved in \cite{Goldring-Koskivirta-Strata-Hasse}.
\end{abstract}

\maketitle

\section*{Introduction}
This paper is the second in a series on our program, outlined in \cite{Goldring-Koskivirta-Strata-Hasse}, to connect (A) {\em Automorphic Algebraicity}, (B) {\em $G$-Zip Geometricity} and (C) {\em Griffiths-Schmid Algebraicity}. The two main themes motivating our program are those of {\em geometry-by-groups} and {\em characteristic shifting}. For more details on our program and background on (A), (B), (C), see the introduction of \loccit and the references therein.

The specific focus of this paper is to dig deeper into several aspects of (B) which were studied in \loccit More precisely, four interrelated topics are pursued:
\begin{enumerate}
\item The behavior of the zip stratification under functoriality.
\item The construction of stacks $\GFZP$ of zip partial flags, together with a stratification of Ekedahl-Oort type. The stacks $\GFZP$ lie between a stack of $G$-Zips and the stack of zip flags $\GF^{\Zcal}$ defined in \cite{Goldring-Koskivirta-Strata-Hasse} :
$$\GF^{\Zcal} \to \GFZP \to \GZip^\Zcal.$$
\item The construction of {\em group-theoretical Hasse invariants} on the above stacks, under mild bounds on the characteristic. We view the existence of such Hasse invariants in terms of several strengthened notions of {\em purity}.
\item The construction of {\em partial flag spaces}, which lie in-between a Shimura variety of Hodge type and its flag space.
\end{enumerate}

The main results obtained in each of the four areas singled out above are summarized below. One of the technical innovations in this paper over \cite{Goldring-Koskivirta-Strata-Hasse} is the use of zip data of higher exponent $n$ (corresponding to the $n$th power of Frobenius, see \S\ref{catGzips}), while \loccit only considered zip data of exponent $1$. Passing to a higher exponent is a key method in \Th~\ref{th-intro-discrete-fibers} below.

Another improvement is that a criterion is given for the existence of group-theoretical Hasse invariants on a general zip datum, while \loccit only treated those zip data arising from cocharacters. This generalization is necessary to obtain sections on the strata of $\GFZP$.

The above topics have already had numerous applications. Some applications to the coherent cohomology of Shimura varieties, automorphic forms and the Langlands Program were given in \cite{Goldring-Koskivirta-Strata-Hasse}. Applications to the existence of global sections of automorphic vector bundles, in particular proving a conjecture of Diamond on mod $p$ Hilbert modular forms were given in \cite{Goldring-Koskivirta-Diamond-I}. The second author has applied the theory of partial flag spaces developed here to determine the normalization of Ekedahl-Oort strata closures \cite{Koskivirta-Normalization}. For additional applications to ampleness of automorphic bundles and vanishing theorems for the coherent cohomology of Shimura varieties, see forthcoming work with Brunebarbe and Stroh \cite{Brunebarbe-Goldring-Koskivirta-Stroh-ampleness}. We expect the above applications to be merely the tip of the iceberg: many more will follow.

\subsection*{Stacks of zip flags and their stratification}
 Let $G$ be a connected, reductive $\fp$-group; let $\Zcal=(G,P,L,Q,M,\varphi^n)$ be a zip datum  and let $\GZip^\Zcal$ be the associated stack of $G$-Zips (\S\ref{catGzips}). The following summarizes some of the basic constructions in \S\ref{sec stofzf} and \S\ref{subsec def fine strata}.
\begin{thmi}
\label{th-intro-zip-flags}
Let $P_0$ be a sub-parabolic of $P$. 

\begin{enumerate}
\item There exists a smooth stack $\GFZP$ of dimension $\dim(P/P_0)$ of zip flags of type $P_0$. 
\item There is a natural projection $\GFZP \to \GZip^{\Zcal}$, which is smooth with flag variety fibers $P/P_0$.
\item If $B\subset P_0$  is a Borel subgroup, then the projection $\GF^\Zcal \to \GZip^\Zcal$ factors through $\GF^\Zcal \to \GFZP$, with flag variety fibers $P_0/B$.
\item  
\label{item-intro-fine-strata}
There is a second zip datum $\Zcal_0$ associated to $(\Zcal,P_0)$ and a smooth map $\GFZP \to \GZip^{\Zcal_0}$. It induces a stratification of $\GFZP$ parameterized by $^{I_0} W$, where $I_0$ is the type of $P_0$. 
\end{enumerate}
\end{thmi}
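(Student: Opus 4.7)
The plan is to realize $\GFZP$ explicitly as a quotient stack so that parts (1)--(3) reduce to standard descent and dimension-counting for quotient stacks, and then for part (4) to manufacture a second zip datum $\Zcal_0$ whose zip group $E_0$ acts on $G$ in such a way that $\GFZP \simeq [E_0 \backslash G]$, after which the $^{I_0}W$-stratification comes by pulling back the Pink--Wedhorn--Ziegler stratification of $\GZip^{\Zcal_0}$.

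\textbf{Parts (1)--(3).} Recall $\GZip^\Zcal \simeq [E \backslash G]$, with $E \subset P\times Q$ the zip group acting by $(a,b)\cdot g = a g b^{-1}$. I would define
$$ \GFZP := \bigl[E \,\bigl\backslash\, (G \times P/P_0)\bigr], $$
where $E$ acts diagonally, using the first projection $E \to P$ on the factor $P/P_0$ by left translation. Since $G\times P/P_0$ is smooth and $E$ is smooth, this is a smooth stack. The first-projection map $G\times P/P_0 \to G$ is $E$-equivariant, smooth, with fiber $P/P_0$, so the induced morphism $\GFZP \to \GZip^\Zcal$ has the same properties, giving (2). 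Because $\dim E = \dim G$ (so $\dim \GZip^\Zcal = 0$), one reads off $\dim\GFZP = \dim(P/P_0)$, completing (1). For (3), when $B\subset P_0$ is a Borel, the analogous description $\GF^\Zcal \simeq [E\backslash (G\times P/B)]$ together with the smooth surjection $P/B \twoheadrightarrow P/P_0$ (with fiber $P_0/B$) furnishes the factorization and identifies the fiber of $\GF^\Zcal \to \GFZP$ with $P_0/B$.

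\textbf{Part (4): the second zip datum and its stratification.} The plan is to produce $\Zcal_0 = (G, P_0, L_0, Q_0, M_0, \varphi^n)$ as follows. After replacing $P_0$ by a conjugate under $P$, I may assume that $P_0$ admits a Levi $L_0 \subset L$. Since $\varphi^n : L \to M$ is an isogeny, $M_0 := \varphi^n(L_0)$ is a Levi subgroup of $M$. I then take $Q_0$ to be the parabolic of $G$ with Levi $M_0$ obtained from the Frobenius-twisted pairing of parabolics containing $T$, chosen so that $Q_0 \cap M = M_0 \cdot (R_u(Q)\cap \ldots)$ makes $(G, P_0, L_0, Q_0, M_0, \varphi^n)$ into a zip datum in the sense of \S\ref{catGzips}. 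Its zip group $E_0$ is then a subgroup of $E$, and the natural inclusion $E_0 \hookrightarrow E$ together with the induced isomorphism $E\times^{E_0} G \simeq G\times (P/P_0)$ identifies $\GFZP \simeq [E_0 \backslash G]$. Under this identification, the projection $\GFZP \to \GZip^{\Zcal_0} = [E_0\backslash G]$ becomes the identity, in particular smooth. The partition of $G$ into $E_0$-orbits parameterized by $^{I_0}W$ (Pink--Wedhorn--Ziegler applied to $\Zcal_0$) then yields, by pullback, the stratification of $\GFZP$ claimed in (\ref{item-intro-fine-strata}).

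\textbf{Main obstacle.} The delicate step is constructing $Q_0$ so that $\Zcal_0$ is a bona fide zip datum and simultaneously the identification $\GFZP \simeq [E_0\backslash G]$ holds; this requires verifying that the Frobenius-twisted parabolic pairing behaves correctly when restricted from $L$ to the Levi $L_0$ of the sub-parabolic $P_0$. Once this compatibility is in place, smoothness of $\GFZP\to \GZip^{\Zcal_0}$ and the stratification-by-$^{I_0}W$ are formal consequences of the description as a quotient of $G$ by the action of $E_0$.
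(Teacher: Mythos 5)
Your parts (1)--(3) are essentially the paper's argument: the paper proves $\GFZP \simeq \bigl[\hat{E}_{P_0}\backslash G\bigr]$ (\Th~\ref{comdiagGF}) and also gives the equivalent description $\bigl[E\backslash (G\times L/P_{0,L})\bigr]$, which agrees with your $\bigl[E\backslash (G\times P/P_0)\bigr]$ via $P/P_0 \cong L/P_{0,L}$ (since $R_u(P)\subset P_0$). The dimension count and the fibrations in (2), (3) follow as you say.

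Part (4) contains a real error. You write that the zip group $E_0=E_{\Zcal_0}$ is a subgroup of $E=E_\Zcal$ and that $\GFZP\simeq [E_0\backslash G]$; both statements are false, and the second is in fact inconsistent with your own part (1). The stack $[E_{\Zcal_0}\backslash G]$ is $\GZip^{\Zcal_0}$, which is $0$-dimensional (like every $G$-Zip stack), while $\GFZP$ has dimension $\dim(P/P_0)>0$ when $P_0\subsetneq P$ — so they cannot be the same. Concretely, if $(x,y)\in E_{\Zcal_0}$ with $x=l_0u_0$, $l_0\in L_0$, $u_0\in R_u(P_0)$, the $E_\Zcal$-condition $\varphi^n(\bar x^L)=\bar y^M$ would force $\varphi^n(\bar u_0^L)=\bar v_0^M$, which fails for general $u_0\in R_u(P_0)\cap L$; thus $E_{\Zcal_0}\not\subset E_\Zcal$. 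The correct picture is that the group realizing $\GFZP$ as a quotient of $G$ is the proper subgroup
\[
\hat E_{P_0}:=\{(x,y)\in E_\Zcal : x\in P_0\},
\]
which sits inside \emph{both} $E_\Zcal$ and $E_{\Zcal_0}$, and the map $\Psi_{P_0}:\GFZP\to\GZip^{\Zcal_0}$ is the projection $[\hat E_{P_0}\backslash G]\to [E_{\Zcal_0}\backslash G]$ induced by $\hat E_{P_0}\hookrightarrow E_{\Zcal_0}$; it is \emph{not} an isomorphism but a smooth surjection with affine fibers $E_{\Zcal_0}/\hat E_{P_0}\simeq\AA^{\dim(P/P_0)}$ (the paper's Lemma~\ref{affinebun}). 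Your conclusion that the fine strata are pulled back from the PWZ stratification of $\GZip^{\Zcal_0}$ along a smooth map is right, but the justification ("the map becomes the identity") is not; you need to introduce $\hat E_{P_0}$ and verify $\hat E_{P_0}\subset E_{\Zcal_0}$ together with the affine-bundle structure of the quotient.
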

The stratification in~\eqref{item-intro-fine-strata} is termed {\em fine}, see \S\ref{subsec def fine strata}. We also define a {\em coarse} stratification of $\GFZP$, see \S\ref{subsec coarse}; it generalizes the Bruhat stratification of $\GZip^{\Zcal}$ studied by Wedhorn \cite{Wedhorn-bruhat}, which in turn
generalizes the a-number stratification studied by Oort and others (\cf introduction of \loccitn).

In general, the zip datum $\Zcal_0$ is not attached to a cocharacter, unless $P_0=P$. There seems to be no known algebraic counterpart of the stack $\GZip^{\Zcal_0}$ in the theory of Shimura varieties. Thus we see that the  group-theoretic approach of $G$-Zips reveals genuinely new structure. Nevertheless, following (i) the analogy between stacks of $G$-Zips and Griffiths-Schmid manifolds suggested in \cite{Goldring-Koskivirta-Strata-Hasse} (see esp. \S I.4) and (ii) the approach to Griffiths-Schmid manifolds in \cite{Green-Griffiths-Kerr-CBMS-Texas}, it appears that there may be a complex-geometric analogue of
$\GZip^{\Zcal_0}$ among the objects studied in \loccitn; see \S\ref{sec-analogue-shimura}.

\Th~\ref{th-intro-zip-flags} generalizes the construction of the stack of zip flags $\GF^{\Zcal}$ in \cite{Goldring-Koskivirta-Strata-Hasse} to an intermediate parabolic $B \subset P_0 \subset P$. In particular, when $P_0=B$ one has $\GFZP=\GF^\Zcal$ and when $P_0=P$ one has $\GFZP=\GZip^\Zcal$. In turn, the case of $\GF^{\Zcal}$ is a group-theoretical generalization of the flag space -- and its stratification -- associated to the moduli space of abelian varieties $\Acal_g \otimes \fp$ by Ekedahl and van der Geer  \cite{EkedahlGeerEO}.

\subsection*{Functoriality of zip stratifications}
The association $\Zcal \to \GZip^\Zcal$ is functorial: A morphism $f:\Zcal_1 \to \Zcal_2$ of zip data is a morphism of the underlying groups $f:G_1 \to G_2$, compatible with the additional structure (see \S\ref{catGzips}). Such a morphims induces one of stacks $\tilde{f}:\GoneZip^{\Zcal_1}\to \GtwoZip^{\Zcal_2}$  (Lemma~\ref{functo}). It is then natural to question how the zip stratification varies under functoriality.

\begin{thmi}[\Th~\ref{discfib}]
\label{th-intro-discrete-fibers}
Let $f:\Zcal_1 \to \Zcal_2$ be a morphism of zip data, such that the underlying morphism of groups $f:G_1 \to G_2$ has central kernel. Then the induced morphism of stacks
\begin{equation*}
\tilde{f}:\GoneZip^{\Zcal_1}\to \GtwoZip^{\Zcal_2}
\end{equation*}
has discrete fibers on the underlying topological spaces.
\end{thmi}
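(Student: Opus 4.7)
The plan is to factor $f\colon G_1\to G_2$ through its image as $G_1\twoheadrightarrow f(G_1)\hookrightarrow G_2$ (each a morphism of zip data with the induced structure) and to prove discreteness of fibers in each factor separately; the general case follows because composition preserves discrete fibers. The closed immersion factor is immediate: by the Pink--Wedhorn--Ziegler classification, the underlying topological space of $\GoneZip^{\Zcal_1}$ is the finite set $\leftexp{I_1}{W_1}$, so the induced map to $\GtwoZip^{\Zcal_2}$ already has finite, hence discrete, fibers. The substantive content is the case of a surjection $f\colon G_1\twoheadrightarrow G_2$ with central kernel $Z$, to which I now turn.

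Fix a geometric point $g_2\in G_2$ and write $O=E_2\cdot g_2$ for its $E_2$-orbit. Since $f$ is a smooth surjection with fibers of dimension $\dim Z$, the preimage $f^{-1}(O)$ is equidimensional of dimension $\dim O+\dim Z$. The fiber of $\tilde f$ over $[g_2]$ has underlying topological space the set of $E_1$-orbits in $f^{-1}(O)$, and discreteness is equivalent to each such orbit being open in $f^{-1}(O)$, equivalently of dimension $\dim O+\dim Z$. From the formula $\dim E_i=\dim P_i+\dim U_{Q_i}$, together with $\dim P_1=\dim P_2+\dim Z$ (as $Z$ is central, hence contained in $P_i$) and $\dim U_{Q_1}=\dim U_{Q_2}$ (as $Z$ is diagonalizable, hence meets the unipotent radical trivially), one computes $\dim E_1=\dim E_2+\dim Z$. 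By orbit--stabilizer the desired statement reduces to
$$\dim \stab_{E_1}(g_1)=\dim \stab_{E_2}(f(g_1)) \qquad \text{for all } g_1\in f^{-1}(O).$$

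To establish this I study the morphism $\Phi\colon \stab_{E_1}(g_1)\to \stab_{E_2}(f(g_1))$ induced by $f$. A direct computation from the definition of the zip group identifies $\ker(E_1\to E_2)$ with the graph $\{(z,\varphi^n(z)):z\in Z\}$ of the Frobenius on $Z$; imposing the stabilizer condition $zg_1=g_1\varphi^n(z)$ forces $z=\varphi^n(z)$ by centrality of $Z$, so $\ker \Phi=Z^{\varphi^n}$ is finite. For the image, I pick any lift $(p_1,q_1)\in E_1$ of a given $(p_2,q_2)\in\stab_{E_2}(f(g_1))$ (possible since $E_1\to E_2$ is surjective), observe that $p_1g_1q_1^{-1}=zg_1$ for some $z\in Z$, and correct $(p_1,q_1)$ by an element of $\ker(E_1\to E_2)$; the corrected pair stabilizes $g_1$ precisely when the Lang equation $z_1\varphi^n(z_1)^{-1}=z$ is solvable in $Z$. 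By the Lang--Steinberg theorem, applied to the connected torus $Z^\circ$ with its restricted Frobenius, a solution exists for every $z\in Z^\circ$. Hence the obstruction to lifting descends to a homomorphism $\stab_{E_2}(f(g_1))\to Z/Z^\circ$ into a finite group, whose kernel is the image of $\Phi$, which therefore has finite index in $\stab_{E_2}(f(g_1))$. The dimension equality, and with it the theorem, follows.

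The main obstacle I anticipate is the Lang--Steinberg step when $Z$ is disconnected: one must verify that the residue $z$ arising from any lift depends on $(p_2,q_2)$ only modulo the image of the Lang map on $Z$ (a subgroup of $Z$ containing $Z^\circ$) and that the resulting assignment $\stab_{E_2}(f(g_1))\to Z/Z^\circ$ is a group homomorphism whose kernel is exactly the image of $\Phi$.
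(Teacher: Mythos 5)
Your decomposition of $f$ into a surjection followed by a closed immersion mirrors the paper's reduction, and your analysis of the surjective factor (via dimension counting, the graph of Frobenius on $Z$, and Lang--Steinberg) is a workable, more explicit argument than the paper's one-line citation of \cite[\Prop~3.20]{Koskivirta-Wedhorn-Hasse} — the paper in fact establishes something stronger there, namely a homeomorphism on topological spaces, but your argument suffices for discrete fibers. However, you then dispose of the closed-immersion case in a single sentence, and that sentence contains a genuine error: the essential difficulty of the theorem has been waved away.

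Having the underlying topological space be the \emph{finite set} ${}^{I_1}W_1$ does not make fibers discrete, because the topology on ${}^{I_1}W_1$ is not the discrete one. It is the specialization topology induced by the closure relations among zip strata (Theorem~\ref{thmPWZorder}), i.e.\ the order topology on the poset $({}^{I_1}W_1, \preceq)$. As the paper explains right after stating the theorem, ``discrete fibers'' is equivalent to the assertion that no two strata $S \subset \overline{S'}$ of $\GoneZip^{\Zcal_1}$ map into the same stratum of $\GtwoZip^{\Zcal_2}$ — a nontrivial comparability statement, not an automatic finiteness statement. This is exactly the part where the paper does the real work: one shows that characteristic sections (group-theoretical Hasse invariants) on the target, pulled back to a stratum closure upstairs, would contradict such a specialization (\Lem~\ref{lemmadisc}), and since the required $\Zcal_2$-ample character whose restriction is $p^n$-small may not exist for the given exponent $n$, one is forced to pass to a higher exponent $N$ and invoke \Lem~\ref{lem1N} and \Prop~\ref{relequiv} to transport the hypotheses. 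None of that machinery appears in your argument. To repair the proposal you would need to supply the closed-embedding case in full; as written, the proposal proves the trivial direction of the theorem and omits the substantive one.
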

Concretely, the discreteness of fibers means that, if two strata $S$ and $S'$ of $\GoneZip^{\Zcal_1}$ map into the same stratum of  $\GtwoZip^{\Zcal_2}$, then $S$ and $S'$ are incomparable in the zip stratification of $\GoneZip^{\Zcal_1}$ (neither is contained in the closure of the other).  

An immediate application of \Th~\ref{th-intro-discrete-fibers} is that the same type of functoriality is satisfied by the Ekedahl-Oort stratification of Hodge-type Shimura varieties.

\subsection*{(Uniform) principal purity of strata}
Recall that a stratification on a scheme (or stack) $X$ is principally pure if each stratum $S\subset X$ is the non-vanishing locus of a section of a line bundle on the Zariski closure $\overline{S}$. We call such a section a characteristic section for $S$. In case $S$ is a stratum of $\GFZP$, due to the connection with the classical Hasse invariant of an abelian scheme, we also call such a section a group-theoretical Hasse invariant. 

The  following question is central to this work:

\begin{ques}
For which zip data $\Zcal$ is the zip stratification of $\GZip^\Zcal$ principally pure? More generally, for which $(\Zcal,P_0)$ is the fine stratification of $\GF^{(\Zcal, P_0)}$ principally pure?
\end{ques}

An example where the zip stratification is not principally pure is given in \S\ref{counterex}. However, the following  shows that such examples are extremely rare. In fact, the result below shows that a stronger property than principal purity usually holds. We say that a stratification of a scheme (or stack) is {\em uniformly principally pure} if there exists a line bundle that admits characteristic sections for all strata. 

Let $(\Zcal,P_0)$ be a pair as in \Th~\ref{th-intro-zip-flags}, with associated zip datum $\Zcal_0$. For a character $\lambda\in X^*(P_0)$, we define properties "orbitally $q$-close" (\Def~\ref{def-orb-p-close}) and "$\Zcal_0$-ample" (\Def~\ref{defample}). Then one has:

\begin{thmi}[\Th~\ref{thmonw}]
\label{th-intro-unif-purity}
Let $q=p^n$. If there exists a $\Zcal_0$-ample, orbitally $q$-close character of $P_0$, then the fine stratification of $\GF^{(\Zcal,P_0)}$ is uniformly principally pure.
\end{thmi}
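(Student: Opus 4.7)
The plan is to reduce the claim to uniform principal purity on $\GZip^{\Zcal_0}$, and then to construct the required sections there out of the character $\lambda$. By Theorem~\ref{th-intro-zip-flags}(4), there is a smooth morphism $\pi:\GF^{(\Zcal,P_0)}\to \GZip^{\Zcal_0}$, and the fine stratification of $\GF^{(\Zcal,P_0)}$ is by construction the preimage under $\pi$ of the zip stratification of $\GZip^{\Zcal_0}$. Since $\pi$ is smooth (hence flat and open), stratum closures pull back to stratum closures and vanishing loci pull back to vanishing loci; hence a line bundle $\Lscr$ on $\GZip^{\Zcal_0}$ admitting a characteristic section on the closure of every zip stratum yields, via $\pi^*$, a line bundle on $\GF^{(\Zcal,P_0)}$ admitting characteristic sections for all fine strata. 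It therefore suffices to prove uniform principal purity for the zip stratification of $\GZip^{\Zcal_0}$ itself.

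For this, present $\GZip^{\Zcal_0}=[E\backslash G]$ where $E$ is the zip group attached to $\Zcal_0$, so that line bundles on $\GZip^{\Zcal_0}$ arise from characters of $E$ and their sections from $E$-semi-invariant regular functions on $G$. A character $\lambda\in X^*(P_0)$ extends to a character of $E$ (after passing to a suitable positive power) precisely when the pair consisting of $\lambda$ restricted to the Levi of $P_0$ and its $\varphi^n$-twist on the Levi of $Q_0$ agrees on the identification imposed by $\Zcal_0$; this is exactly what the orbital $q$-closeness hypothesis ensures, and is the key place where allowing exponent $n>1$ enlarges the class of admissible characters. Denote the resulting line bundle by $\Dscr_\lambda$. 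For each zip stratum $S_w\subset \GZip^{\Zcal_0}$ indexed by $w\in{}^{I_0}W$, the closure $\overline{S_w}$ is an $E$-stable closed subscheme of $G$, and its boundary $\overline{S_w}\setminus S_w$ is a union of $E$-stable closures of lower strata. The $\Zcal_0$-ampleness of $\lambda$ then provides, for some $N_w\geq 1$, an $E$-semi-invariant section of $\Dscr_\lambda^{\otimes N_w}$ on $\overline{S_w}$ whose non-vanishing locus is exactly $S_w$. Since ${}^{I_0}W$ is finite, taking a common multiple of the $N_w$ gives a single integer $N\geq 1$ such that $\Dscr_\lambda^{\otimes N}$ admits a characteristic section for every stratum, which is the required uniform principal purity.

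The main obstacle is the construction, for each $w$, of the $E$-semi-invariant characteristic section on $\overline{S_w}$. The zip group $E$ acts on $G$ through a $\varphi^n$-twisted rule, so descending $\lambda$ to a weight of $E$ is obstructed unless $\lambda$ matches its Frobenius conjugate on the opposite Levi; this obstruction is precisely what orbital $q$-closeness removes. On top of this, uniform control across all $w$ requires that $\lambda$ pair positively against every $E$-stable boundary divisor, which is delivered by the $\Zcal_0$-ample assumption. One expects to construct these sections by induction along the closure order of strata of $\GZip^{\Zcal_0}$, in the spirit of the Hasse invariants produced in \cite{Goldring-Koskivirta-Strata-Hasse}, but now adapted to the more general setting of \Th~\ref{th-intro-unif-purity} in which $\Zcal_0$ need not arise from a cocharacter.
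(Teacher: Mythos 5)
Your overall reduction is exactly the paper's: identify the fine strata as pullbacks of the zip strata under the smooth morphism $\Psi_{P_0}\colon \GF^{(\Zcal,P_0)}\to\GZip^{\Zcal_0}$, get a line bundle with characteristic sections on $\GZip^{\Zcal_0}$, and pull it back. The paper then invokes \Th~\ref{sectionszipstrata} for the zip datum $\Zcal_0$ and stops; you instead attempt to re-derive that theorem, and the re-derivation goes astray in a concrete way.

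The problem is the claim that a character $\lambda\in X^*(P_0)$ ``extends to a character of $E$ (after passing to a suitable positive power) precisely when $\lambda$ matches its Frobenius conjugate on the opposite Levi; this obstruction is precisely what orbital $q$-closeness removes.'' There is no such obstruction: the paper identifies $X^*(E_{\Zcal_0})=X^*(L_0)$ via the first projection $E_{\Zcal_0}\to P_0\to L_0$, so every character of $L_0$ already gives a character of $E_{\Zcal_0}$ and a line bundle $\Vscr(\lambda)$, with no positivity or closeness assumption needed. The role of orbital $q$-closeness is entirely different and occurs at a later step: for a given stratum $S=G_w$ the nonzero section $h_{w,\chi}$ of $\Vscr(N\chi)$ on $\left[E\backslash S\right]$ exists automatically (the space is one-dimensional); the content is whether some power of $h_{w,\chi}$ extends to $\left[E\backslash\overline S\right]$ with non-vanishing locus exactly $S$, and by \Prop~\ref{prop HI} this is controlled by the signs of the integers $n_\alpha=\sum_i \langle(zw^{-1})^{(i)}({}^{\sigma^i}\chi),w\alpha^\vee\rangle p^i$. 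It is the arithmetic of these $p$-power expansions that forces the combined hypotheses: $\Zcal_0$-ampleness ensures the leading behaviour is positive, and orbital $q$-closeness bounds the error terms so that the whole sum stays positive. Your draft attributes extension-across-the-boundary to ampleness alone and character descent to $q$-closeness, which interchanges where the two hypotheses actually act and omits the real mechanism (the inequalities on the $n_\alpha$). As written, the ``main obstacle'' paragraph does not supply the missing estimate and ends by deferring back to \cite{Goldring-Koskivirta-Strata-Hasse}, which is precisely what the paper's one-line proof does by quoting \Th~\ref{sectionszipstrata}; you should either do the same or reproduce the $n_\alpha>0$ argument correctly.
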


\Th~\ref{th-intro-discrete-fibers} and \Th~\ref{th-intro-unif-purity} are closely related. To prove the former, we use the latter as a key ingredient. Conversely, \Th~\ref{th-intro-discrete-fibers} has consequences for the existence of characteristic sections for maximal zip data (see below and \S\ref{subsec hodgetype}), and in particular for Hodge-type zip data (\Rmk~\ref{rmk_hodge_maximal}).

We say that a cocharacter datum $(G,\mu)$ is {\em maximal} if there exists a representation $r:G\to GL(V)$ with central kernel such that $\mu\circ r$ has exactly two weights. This implies that the parabolic subgroup of $GL(V)$ attached to $r \circ \mu$ is maximal. A zip datum $\Zcal$ is called maximal if $\Zcal=\Zcal_\mu$ for some maximal $(G,\mu)$. For example, a Shimura datum with good reduction at $p$ gives rise to a maximal cocharacter datum. We prove the following:

\begin{cor1}[\Cor~ \ref{corHT}]
If $(G,\mu)$ is maximal, then the stratification of $\GZip^\mu$ is uniformly principally pure.
\end{cor1}

This gives another proof of the existence of Hasse invariants for good reduction Shimura varieties of Hodge-type, first proved in \cite{Goldring-Koskivirta-Strata-Hasse}. In \loccit \Cor~3.1.3, we proved the special case of \Cor~ 1, when $(G,\mu)$ arises by reduction modulo $p$ from a Shimura datum of Hodge-type. The proof in \loccit used the methods of \cite{Deligne-Shimura-varieties} to show that the Hodge line bundle was orbitally $p$-close. By contrast with \cite{Goldring-Koskivirta-Strata-Hasse}, no characteristic zero methods are used here and no case-by-case examination is needed.

\subsection*{Flag spaces and affineness of strata}
In \cite{Goldring-Koskivirta-Strata-Hasse}, we generalized the construction of the flag space of \cite{EkedahlGeerEO} to arbitrary Hodge-type Shimura varieties. Here we generalize this construction further to arbitrary parabolics. 

Let $(\GG, X)$ be a Shimura datum of Hodge-type. Let $\Shk$ be the Kisin-Vasiu integral model of the associated Shimura variety $Sh_K(\GG,X)$ at a level $K$ which is hyperspecial at $p$. Denote the special fiber of $\Sscr_K$ by $S_K$. Recall that Zhang \cite{ZhangEOHodge} gives a smooth morphism $\zeta: S_K \to \GZip^\Zcal$, where $G$ is the special fiber of an extension of $\GG$ to a smooth, reductive group scheme over $\ZZ_{(p)}$.

Associated to a sub-parabolic $P_0 \subset P$, we define a partial flag space $\Fcal l_{K}$. It is a fibration over $\Shk$ with fiber $P/P_0$. The special fiber $F l_{K}$ of $\Fcal l_{K}$ fits in a cartesian diagram:

$$\xymatrix@1@M=5pt{
F l_{K} \ar[r]^-{\zeta_{P_0}} \ar[d]_-{\pi} & \GF^{(\Zcal, P_0)} \ar[d]^-{\pi_{P_0}} \\
S_K \ar[r]_-{\zeta} & \GZip^{\Zcal}
}$$
 Fine flag strata of $F l_K$ are defined as the preimages by $\zeta_{P_0}$ of those in $\GF^{(\Zcal, P_0)}$.

\begin{thmi}[\Th~\ref{thmshim}]  There exists an integer $N=N(\GG, X, K)$ such that, for all $p\geq N$, one has: 
\begin{enumerate}
\item $Sh_K(\GG,X)$ has good reduction at all places $\pfr|p$.
\item The fine stratification of $F l_{K}$ is uniformly principally pure.
\item If the closure $\overline{S}$ of a fine flag stratum $S\subset F l_{K}$  is proper, then $S$ is affine.
\end{enumerate}
\end{thmi}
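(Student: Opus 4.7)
The plan is to derive the three parts from the group-theoretic purity already established in the paper, combined with standard facts about Hodge-type integral models.

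Part (1) is independent of the zip machinery: the reductive group $\GG$ underlying a Hodge-type Shimura datum is unramified at all but finitely many primes, so for $p$ outside this finite set $K_p$ may be taken hyperspecial and the Kisin--Vasiu construction produces a smooth canonical integral model $\Sscr_K$, yielding good reduction at every $\pfr\mid p$. Taking $N$ larger than every such bad prime handles (1).

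For part (2), I would invoke Remark~\ref{rmk_hodge_maximal}, which tells us that the zip datum $\Zcal$ attached to a Hodge-type Shimura datum is maximal. By Theorem~\ref{th-intro-unif-purity}, uniform principal purity of the fine stratification on $\GFZP$ then reduces to exhibiting a character $\lambda \in X^*(P_0)$ which is both $\Zcal_0$-ample and orbitally $p$-close. The orbitally $p$-close condition, for a fixed $\lambda$, is merely a lower bound on $p$, so enlarging $N$ we may assume that a suitable $\Zcal_0$-ample $\lambda$ also satisfies it; this produces a single line bundle on $\GFZP$ admitting characteristic sections for every fine stratum. Because the diagram in the theorem statement is cartesian, $\zeta_{P_0}$ is smooth, and the fine flag strata of $F l_K$ are by definition preimages of the fine strata of $\GFZP$, pulling back these characteristic sections along $\zeta_{P_0}^*$ yields uniform principal purity on $F l_K$ and proves (2).

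For part (3), one further arranges the choice of $\lambda$ so that the corresponding line bundle $\Lcal$ is globally ample on a suitable compactification of $F l_K$. Concretely, a sufficiently positive combination of the pullback of the Hodge line bundle from $\Shkmin$ (where it is known to be ample) together with a relatively ample bundle for the flag fibration $F l_K \to S_K$ produces such a $\Lcal$, whose restriction to any proper closed subscheme of $F l_K$ is ample. Enlarging $N$ once more to keep the resulting $\lambda$ orbitally $p$-close, the characteristic section of a fine flag stratum $S$ with proper closure $\overline{S}$ cuts out $S = \overline{S}\setminus\{s = 0\}$ as the complement of an effective divisor in an ample class on the proper scheme $\overline{S}$, whence $S$ is affine.

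The central technical point, and my anticipated main obstacle, is the simultaneous realization, for a single character $\lambda$, of three distinct constraints: the intrinsic group-theoretic $\Zcal_0$-ampleness demanded by Theorem~\ref{th-intro-unif-purity}, the geometric ampleness of $\Lcal$ on a compactification of $F l_K$, and the orbitally $p$-close bound. The first two live in essentially different categories (one representation-theoretic on the zip stack, the other algebro-geometric on the Shimura variety), and forcing both together while also satisfying the Weyl-orbit bound is exactly what the threshold $N = N(\GG,X,K)$ packages. Once such a $\lambda$ has been produced, the remainder of the argument is formal.
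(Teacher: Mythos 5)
Your proposal is correct and follows essentially the same route as the paper: part (2) by pulling back characteristic sections from $\GZip^{\Zcal_0}$ via $\zeta_{P_0}\circ\Psi_{P_0}$ once a $\Zcal_0$-ample character becomes orbitally $p$-close for $p$ large, and part (3) by tensoring the $P/P_0$-relatively-ample bundle $\Lscr_K(\eta)$ with a high power of $\pi^*\omega$ to obtain an ample bundle on $\Fcal l_K$ whose defining character $\eta+m\eta_\omega$ stays $\Zcal_0$-ample and becomes $p$-small for $p\geq N$, so that the characteristic section of a proper $\overline{S}$ cuts out an affine open. One small remark: your appeal to \Rmk~\ref{rmk_hodge_maximal} in part (2) is superfluous here---maximality of $\Zcal$ does not transfer to $\Zcal_0$ (the paper explicitly notes $\Zcal_0$ is usually not maximal), and it is not needed, since in the large-$p$ regime the orbitally $p$-close condition on a fixed $\Zcal_0$-ample $\lambda$ is automatic; maximality is the tool deployed elsewhere in the paper (\Cor~\ref{corHT}) to cover small $p$, not this theorem.
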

The special case where the Shimura variety is of Siegel-type and $P_0=B$ was treated in \cite[\Prop 10.5~ii)]{EkedahlGeerEO}. Note that the hypothesis of (2) is satisfied if $S_K$ is proper.

\subsection*{Outline}
\S1 reviews the theory of $G$-zips, cocharacter data and zip data, and explains the relation between them. Maximal cocharacter data are also defined. Finally, we recall basic facts on the stack $\GZip^\Zcal$, and give a parametrization of the strata.

In \S2, we introduce the stack of $G$-zip flags, represent it as a quotient stack and define coarse flag strata.
\S3 is devoted to the definition of fine flag strata. The important notions of minimal and cominimal strata are defined.
\S4 studies sections of line bundles on fine flag strata and zip strata. In particular, \Th~\ref{thmonw} is proved and a counter-example to principal purity is given.
Some aspects of functoriality are investigated in \S5. The main results are \Th~\ref{discfib} on discreteness of fibers and \Cor~\ref{corHT} for maximal zip data.

We look at some applications to Shimura varieties  in \S6, where generalized flag spaces and their stratifications are defined. Finally, \Th~\ref{thmshim} is proved.

\section*{Acknowledgments}

The authors would like to thank Torsten Wedhorn for very stimulating discussions and advice. We are grateful to Beno\^{i}t Stroh for important ideas that influenced some aspects of the paper. Also, we thank Paul Ziegler for helpful discussions and correspondence. Finally we want to thank the reviewers for their suggestions on how to improve the paper.

\section{Review of $G$-Zips}
We fix a prime number $p$ and denote by $k$ an algebraic closure of $\FF_p$.

\subsection{The category of zip data}\label{catGzips}
Let $n\geq 1$ be an integer. In this paper, a \emph{zip datum} of exponent $n$ is a tuple $\Zcal=(G,P,L,Q,M,\varphi^n)$, where $G$ is a reductive group over $\FF_p$, $\varphi:G\to G$ the relative Frobenius morphism, $P,Q\subset G_k$ parabolics (not necessarily defined over $\FF_p$) with Levi subgroups $L\subset P$ and $M\subset Q$, and $n\geq 1$ an integer. We impose the condition $M=\varphi^n(L)$. Hence the $p^n$-Frobenius map restricts to $\varphi^n:L\to M$.

Zip data of exponent $n$ form a category $\Dscr_n$, where morphisms are defined as follows. Let $\Zcal_i=(G_,P_i,L_i,Q_i,M_i,\varphi^n)$ for $i=1,2$ two zip data of exponent $n$, and denote by $U_i\subset P_i$ and $V_i\subset Q_i$ the unipotent radicals. A morphism of zip data $\Zcal_1\to \Zcal_2$ is a group homomorphism $f:G_1\to G_2$ (defined over $\FF_p$) satisfying the conditions
\begin{equation}
 f(\square_1)\subset\square_2 \textrm{ for } \square=P,L,Q,M,U,V.
\end{equation}

For a zip datum $\Zcal=(G,P,L,Q,M,\varphi^n)$ and $g\in G(k)$, we define a conjugate zip datum
$${}^g \Zcal:=(G,{}^gP,{}^gL,{}^{\varphi^n(g)}Q,{}^{\varphi^n(g)}M,\varphi^n).$$

\subsection{Cocharacter data} \label{subsec cocharzipdata}
Let $K$ be a field with a fixed algebraic closure $\overline{K}$. We define the category of \emph{cocharacter data} over a field $K$ as the category of pairs $(G,\mu)$ where $G$ is a reductive group over $K$ and $\mu:\GG_{m,\overline{K}}\to G_{\overline{K}}$ is a cocharacter.

The category of cocharacter data is denoted by $\Dscr^{\rm co}$. Morphisms of cocharacter data $(G_1,\mu_1)\to (G_2,\mu_2)$ are group homomorphisms $f:G_1\to G_2$ (defined over $K$) satisfying $\mu_2=f\circ \mu_1$.

Let $(G,\mu)$ be a cocharacter datum. The cocharacter $\mu$ gives rise to a pair of opposite parabolics $(P_-, P_+)$ and a Levi subgroup $L:=P_-\cap P_+$. The Lie algebra of the parabolic $P_-$ (resp. $P_+$) is the sum of the non-positive (resp. non-negative) weight spaces of $\Ad\circ \mu$. More precisely, $P_+(\overline{K})$ consists of elements $g\in G(\overline{K})$ such that the limit
\begin{equation}\label{limparab}
\lim_{t\to 0}\mu(t)g\mu(t)^{-1}
\end{equation}
exists, i.e such that the map $\GG_{m,\overline{K}} \to G_{\overline{K}}$,  $t\mapsto\mu(t)g\mu(t)^{-1}$ extends to a morphism of varieties $\AA_{\overline{K}}^1\to G_{\overline{K}}$. The unipotent radical of $P_+$ is the set of such elements $g$ for which this limit is $1\in G(\overline{K})$.

When $K=\FF_p$, we define $P:=P_-$, $Q:=\varphi^n\left( P_+ \right)$ and $M:= \varphi^n(L)$. The tuple $\Zcal_\mu:=(G,P,L,Q,M,\varphi^n)$ is then a zip datum of exponent $n$. We call it the zip datum of exponent $n$ attached to the cocharacter datum $(G,\mu)$.

\begin{proposition}
The construction above gives rise to a faithful functor
\begin{equation}
\Zcal_n : \Dscr^{\rm co} \longrightarrow \Dscr_n, \ (G,\mu)\mapsto \Zcal_\mu
\end{equation}
\end{proposition}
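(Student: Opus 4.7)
The plan is to show that the same $\FF_p$-group homomorphism $f : G_1 \to G_2$ underlying any morphism $(G_1, \mu_1) \to (G_2, \mu_2)$ in $\Dscr^{\rm co}$ is automatically a morphism $\Zcal_{\mu_1} \to \Zcal_{\mu_2}$ in $\Dscr_n$, i.e., satisfies $f(\square_1) \subset \square_2$ for $\square \in \{P, L, Q, M, U, V\}$. Granting this, functoriality of $\Zcal_n$ with respect to composition is immediate because composition in both $\Dscr^{\rm co}$ and $\Dscr_n$ is plain composition of $\FF_p$-group homomorphisms, and faithfulness is likewise immediate because $\Zcal_n$ is the identity on underlying maps of groups.

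The main step is to handle the parabolics $P_+$ and their unipotent radicals via the dynamical characterization recalled in \eqref{limparab}. If $g \in G_1(\overline K)$ is such that $t \mapsto \mu_1(t)\, g\, \mu_1(t)^{-1}$ extends to a morphism $\AA_{\overline K}^1 \to G_{1,\overline K}$, then composing with $f$ and using $\mu_2 = f \circ \mu_1$ yields
\begin{equation*}
t \mapsto f\bigl(\mu_1(t)\, g\, \mu_1(t)^{-1}\bigr) = \mu_2(t)\, f(g)\, \mu_2(t)^{-1},
\end{equation*}
which is equally defined at $t = 0$, showing $f(g) \in P_{2,+}(\overline K)$; specializing to the case when the limit equals $1$ gives the analogous inclusion of unipotent radicals. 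Replacing both cocharacters by their inverses $t \mapsto \mu_i(t^{-1})$ (the compatibility with $f$ is preserved) yields the same statements for $P_-$ and its unipotent radical, and intersecting with the $P_+$ inclusion gives $f(L_1) \subset L_2$. Since the subgroups concerned are smooth and thus reduced, the inclusions on $\overline K$-points correspond to inclusions of the relevant subgroup schemes.

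For the Frobenius-twisted pieces $Q_i = \varphi^n(P_{i,+})$, $M_i = \varphi^n(L_i)$, and $V_i = \varphi^n(U_{i,+})$, the key observation is that $f$ is defined over $\FF_p$ and therefore commutes with the relative Frobenius $\varphi^n$, so for instance $f(Q_1) = \varphi^n(f(P_{1,+})) \subset \varphi^n(P_{2,+}) = Q_2$, and similarly for $M_i$ and $V_i$. There is no serious obstacle here; the only real subtlety is to notice that the dynamical/limit description of cocharacter-attached parabolics is manifestly functorial in the pair $(G, \mu)$, which is exactly what powers each of the inclusions above.
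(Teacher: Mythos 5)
Your proof is correct and takes exactly the approach the paper has in mind: the paper's proof is the single sentence ``This follows immediately from characterization~\eqref{limparab} above,'' and your argument simply unpacks that remark, deriving $f(P_{+,1})\subset P_{+,2}$ (and the unipotent radical inclusion) from functoriality of the limit description, getting $P_-$ and $L$ by inverting the cocharacter and intersecting, and transporting to $Q,M,V$ via Frobenius-equivariance of $f$ over $\FF_p$. The observation that faithfulness is trivial because $\Zcal_n$ is the identity on underlying group homomorphisms is likewise the intended one.
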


\begin{proof}
This follows immediately from characterization (\ref{limparab}) above.
\end{proof}

\subsection{Frames of zip data} \label{subsec frames}
Let $\Zcal:=(G,P,L,Q,M,\varphi^n)$ be a zip datum of exponent $n\geq 1$. Let $(B,T)$ be a Borel pair in $G$ defined over $\FF_p$, which exists by Steinberg's Theorem.

Write $W:=W(G,T)$ for the Weyl group of $G_k$. Let $\Phi\subset X^*(T)$ (resp. $\Phi_L \subset X^*(T)$) be the set of $T$-roots of $G$ (resp. $L$). We define the set of positive roots $\Phi^+\subset \Phi$ by the condition that $\alpha \in \Phi^+$ when $U_{\alpha}\subset B$. Write $\Delta\subset \Phi^+$ for the set of positive simple roots. For $\alpha \in \Phi$, let $s_\alpha \in W$ be the corresponding reflection. Then $(W,\{s_\alpha, \alpha \in \Delta\})$ is a Coxeter group and we denote by $\ell :W\to \NN$ the length function.

Let $I\subset \Delta$ (resp. $J\subset \Delta$) be the type of $P$ (resp. $Q$). For any subset $K\subset \Delta$, let $W_K \subset W$ be the subgroup generated by $\{s_\alpha, \alpha \in K\}$.  Let $w_0$ (resp. $w_{0,K}$) be the longest element in $W$ (resp. $W_K$). Denote by ${}^K W$ (resp. $W^K$) the subset of elements $w\in W$ which are minimal in the coset $W_K w$ (resp. $wW_K$). The set ${}^K W$ (resp. $W^K$) is a set of representatives for the quotient $W_K \backslash W$ (resp. $W/W_K$).

\begin{definition} \label{defWframe}
Let $(B,T)$ be a Borel pair and $z\in W$. We call $(B,T,z)$ a \underline{$W$-frame} for $\Zcal$ if the following conditions are satisfied:
\begin{enumerate}[(i)]
\item $B,T$ are defined over $\fp$.
\item $B\subset P$.
\item ${}^z \!B\subset Q$.
\item \label{item-Wframe4} $ \varphi(B\cap L) =  {}^z \!B \cap M$.
\end{enumerate}
\end{definition}

\begin{rmk}\label{remBTcond} \
\begin{enumerate}[(1)]

\item If $(B,T)$ is defined over $\FF_p$, there exists $z\in W$ such that $(B,T,z)$ is a $W$-frame (proof of \cite[\Prop~3.7]{Pink-Wedhorn-Ziegler-zip-data}).
\item For any zip datum $\Zcal$, there exists $g\in G(k)$ such that ${}^g \Zcal$ admits a $W$-frame.
\item Similarly, for any cocharacter $\mu$ of $G$, there exists a $G(k)$-conjugate $\mu'$ of $\mu$ such that $\Zcal_{\mu'}$ admits a $W$-frame.
\item \label{item-cheval-syst} Let $(B,T,z)$ be a $W$-frame. For $w\in W$, choose a representative $\dot{w}\in N_G(T)$, such that $(w_1w_2)^\cdot = \dot{w}_1\dot{w}_2$ whenever $\ell(w_1 w_2)=\ell(w_1)+\ell(w_2)$ (this is possible by choosing a Chevalley system, \cite[XXIII, \S6]{SGA3}). Then $({}^z \! B,T,\dot{z}^{-1})$ is a frame as defined in \cite[\Def~3.6]{Pink-Wedhorn-Ziegler-zip-data}.
\item If $(B,T,z)$ is a $W$-frame, then condition \eqref{item-Wframe4} of \Def~\ref{defWframe} implies $z\in W^J$.
\end{enumerate}
\end{rmk}

\begin{definition} \label{defdualWframe}
Let $(B,T)$ be a Borel pair and $z\in W$. We call $(B,T,z)$ a \underline{dual $W$-frame} for $\Zcal$ if the following conditions are satisfied:
\begin{enumerate}[(i)]
\item $B,T$ are defined over $\fp$.
\item $B\subset Q$.
\item ${}^z \!B\subset P$.
\item $ \varphi({}^z \!B\cap L) =  B \cap M$.
\end{enumerate}
\end{definition}

The convention $B\subset P$ was used in \cite{Goldring-Koskivirta-Strata-Hasse}, contrary to \cite{Pink-Wedhorn-Ziegler-zip-data} and \cite{PinkWedhornZiegler-F-Zips-additional-structure}, who use the convention $B\subset Q$. The advantage of our definition of $W$-frame is that the parabolic $P$ exists naturally over the integral model of $G$, in the context of Shimura varieties. However, the result on Hasse invariants (\Th~\ref{sectionszipstrata}) can be stated more easily for a dual $W$-frame.

Let $(G,\mu)$ a cocharacter datum and denote by $\Zcal_\mu=(G,P,L,Q,M,\varphi^n)$ its associated zip datum of exponent $n$. Fix a Borel pair $(B,T)$ defined over $\FF_p$ such that $B\subset P$. We leave the following easy lemma to the reader:

\begin{lemma}\label{gtf}
The triple $(B,T,w_{0}w_{0,J})$ is a $W$-frame for the zip datum $\Zcal_\mu$.
\end{lemma}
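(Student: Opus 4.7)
The strategy is to verify each of the four conditions of Definition \ref{defWframe} with $z = w_0 w_{0,J}$; (i) and (ii) hold by hypothesis. The key starting observation is that the Borel $B^{-} := w_{0} B w_{0}^{-1}$ is the unique Borel of $G_{k}$ opposite to $B$ and containing the $\FF_{p}$-rational torus $T$; it is therefore automatically defined over $\FF_{p}$, so that $\varphi^{n}(B^{-}) = B^{-}$.

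For (iii): since $B \subset P = P_{-}$, one has $B^{-} \subset P_{+}$, and hence $B^{-} = \varphi^{n}(B^{-}) \subset \varphi^{n}(P_{+}) = Q$. As every Borel of $G_{k}$ lies in a unique parabolic of each type and $Q$ has type $J$, this forces $Q = w_{0} P_{J} w_{0}^{-1}$, where $P_{J} \supset B$ denotes the standard parabolic of type $J$. Since $\dot{w}_{0,J} \in L_{J} \subset P_{J}$, conjugation by $w_{0,J}$ preserves $P_{J}$, giving $w_{0,J} B w_{0,J}^{-1} \subset P_{J}$; conjugating further by $w_{0}$ yields ${}^{z}B \subset w_{0} P_{J} w_{0}^{-1} = Q$.

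For (iv): the Levi $M$ of $Q$ containing $T$ coincides with $w_{0} L_{J} w_{0}^{-1}$ (and with $\varphi^{n}(L)$, since $T \subset L$ together with $\FF_p$-rationality of $T$ yields $T = \varphi^{n}(T) \subset \varphi^{n}(L) = M$). Using that $w_{0,J}$ normalizes $L_{J}$ and is the longest element of $W_{J}$, a direct computation gives
\[
  {}^{z}B \cap M
  = w_{0}\bigl(w_{0,J}(B \cap L_{J}) w_{0,J}^{-1}\bigr) w_{0}^{-1}
  = w_{0}(B^{-} \cap L_{J}) w_{0}^{-1}
  = B \cap M,
\]
where the middle equality uses that $w_{0,J}$ sends the Borel $B \cap L_{J}$ of $L_{J}$ to its opposite $B^{-} \cap L_{J}$. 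On the other hand, $\varphi^{n}$ is a bijection on $G(k)$ and fixes $B$, so $\varphi^{n}(B \cap L) = \varphi^{n}(B) \cap \varphi^{n}(L) = B \cap M$. Combining these yields (iv).

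The only real obstacle is bookkeeping: identifying $Q$ with $w_{0} P_{J} w_{0}^{-1}$ and $M$ with $w_{0} L_{J} w_{0}^{-1}$, and carefully tracking how $w_{0}$ and $w_{0,J}$ act on $B$, $L$, and $L_{J}$. Once these identifications are in place, the verification is a short Weyl-group computation, requiring no input beyond the $\FF_{p}$-rationality of $(B,T)$ and standard properties of the longest elements $w_{0}$ and $w_{0,J}$.
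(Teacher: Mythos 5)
Your proof is correct. The paper states this result without proof (``We leave the following easy lemma to the reader''), and your argument is precisely the direct verification that the omitted proof would consist of: you check conditions (iii) and (iv) of \Def~\ref{defWframe} by identifying $Q = \dot{w}_0 P_J \dot{w}_0^{-1}$ and $M = \dot{w}_0 L_J \dot{w}_0^{-1}$ (via $\fp$-rationality of the opposite Borel $B^-$ and the fact that $\varphi^n$ fixes $\fp$-rational subgroups), and then using that $w_{0,J}$ carries the Borel $B\cap L_J$ of $L_J$ to its opposite. One small remark on notation: since $\ell(w_0 w_{0,J}) = \ell(w_0) - \ell(w_{0,J})$, the chosen representative $\dot{z}$ need not literally equal $\dot{w}_0\dot{w}_{0,J}$, but they differ by an element of $T$, which normalizes $B$ and so does not affect any of your computations. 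Also, the $\varphi$ appearing in condition \Def~\ref{defWframe}\eqref{item-Wframe4} should be read as $\varphi^n$ (otherwise the two sides would not both lie in $M = \varphi^n(L)$), which is the interpretation you correctly adopt.
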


\subsection{Maximal cocharacter data} \label{subsec hodgetype} Let $(G,\mu)$ be a cocharacter datum over $\FF_p$. We say that $(G,\mu)$ is maximal if there exists a finite-dimensional $\FF_p$-vector space $V$ and an $\FF_p$-representation with central (scheme-theoretic) kernel
\begin{equation}\label{maxtyper}
r:G\to GL(V)
\end{equation}
such that $r_k\circ \mu:\GG_{m,k}\to GL(V)_k$ has exactly two weights. Equivalently, there is a morphism of cocharacter data with central kernel $(G,\mu)\to (GL(V),\mu')$ such that $\mu'$ defines a maximal parabolic subgroup in $GL(V)_k$.

\begin{proposition}
Let $(G,\mu)$ be a cocharacter datum over $\FF_p$. The following assertions are equivalent:
\begin{enumerate}
\item \label{item-maxtype1} $(G,\mu)$ is maximal.
\item \label{item-maxtype2} There exists a finite-dimensional $k$-vector space $W$ and a representation $r':G_k\to GL(W)$ over $k$ with central kernel such that $r'\circ \mu$ has only two weights.
\end{enumerate}
\end{proposition}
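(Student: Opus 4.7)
The implication $(1) \Rightarrow (2)$ is immediate: set $W := V \otimes_{\FF_p} k$ and $r' := r_k$. For the converse, I plan to use a Weil-restriction argument. By spreading out (since $r'$ is a morphism of finite-type $k$-schemes), there exist a finite extension $\FF_q = \FF_{p^m}$ inside $k$, an $\FF_q$-vector space $W_0$, and an $\FF_q$-representation $r'_0 \colon G_{\FF_q} \to GL(W_0)$ whose base change along a fixed embedding $\iota \colon \FF_q \hookrightarrow k$ recovers $r'$. Since $G(k)$-conjugation of $\mu$ preserves condition (2), I may assume $\mu$ factors through a maximal torus $T \subset G$ defined over $\FF_p$ and split over $\FF_q$ (enlarging $q$ as needed); then $\mu$ itself descends to an $\FF_q$-cocharacter $\mu_{\FF_q}$ of $T_{\FF_q}$. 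Now define
\begin{equation*}
r \colon G \longrightarrow \mathrm{Res}_{\FF_q/\FF_p}(G_{\FF_q}) \xrightarrow{\mathrm{Res}(r'_0)} \mathrm{Res}_{\FF_q/\FF_p}(GL(W_0)) = GL(V),
\end{equation*}
with $V := W_0$ regarded as an $\FF_p$-vector space of dimension $m \cdot \dim_{\FF_q}(W_0)$.

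For the central kernel check: base changing to $k$ gives a canonical decomposition $V \otimes_{\FF_p} k = \bigoplus_{\sigma \in \Gal(\FF_q/\FF_p)} W_0^\sigma$ with $W_0^\sigma := W_0 \otimes_{\FF_q, \iota\sigma} k$, and $r_k$ becomes the diagonal $G_k \to \prod_\sigma G_k$ composed with $\prod_\sigma r'_{k,\sigma}$, where $r'_{k,\sigma}$ is the base change of $r'_0$ along $\iota\sigma$. The kernel $K_0 \subset G_{\FF_q}$ of $r'_0$ satisfies $K_0 \otimes_{\FF_q,\iota} k \subset Z(G)_k$ by hypothesis; because $Z(G)$ is defined over $\FF_p$, faithfully flat descent upgrades this to $K_0 \subset Z(G_{\FF_q})$, and base changing along $\iota\sigma$ yields $\ker(r'_{k,\sigma}) \subset Z(G)_k$ for every $\sigma$. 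Hence $\ker(r_k) = \bigcap_\sigma \ker(r'_{k,\sigma})$ is central.

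For the weight count: the $\FF_q$-cocharacter $r'_0 \circ \mu_{\FF_q} \colon \GG_{m,\FF_q} \to GL(W_0)$ has a weight decomposition over $\FF_q$ with exactly the two integer weights $\{a, b\}$, a multiset which is preserved by any base change $\FF_q \hookrightarrow k$. Using Galois-equivariance of the canonical pairing $X^*(T_{\FF_q}) \times X_*(T_{\FF_q}) \to \ZZ$ (available because $T$ is $\FF_q$-split), one verifies that each summand $W_0^\sigma$ carries only the two weights $\{a, b\}$ under $r'_{k,\sigma} \circ \mu$; consequently $r_k \circ \mu$ has weights exactly $\{a, b\}$ on $V \otimes_{\FF_p} k$.

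The main obstacle lies in this last step: what is directly preserved by base change is the composition $(r'_0 \circ \mu_{\FF_q})_{k, \iota\sigma} = r'_{k,\sigma} \circ \mu_{k, \iota\sigma}$, whereas what appears in $r_k \circ \mu$ is $r'_{k,\sigma} \circ \mu_{k, \iota}$ (the cocharacter base-changed along the fixed embedding $\iota$). Reconciling these two expressions requires tracking, simultaneously, the Galois action on $r'_{k,\sigma}$ and on the weight characters in $X^*(T_k)$; this is where the $\FF_q$-splitness of $T$ and the Galois-equivariance of the character-cocharacter pairing enter crucially.
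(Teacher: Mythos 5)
Your proposal matches the paper's proof in structure: you spread $r'$ out to a finite field $\FF_q$ and then average over $\Gal(\FF_q/\FF_p)$ — your Weil restriction, after base change to $k$, is literally the paper's direct sum $\bigoplus_{\sigma}{}^\sigma r_1$. The easy direction and the central-kernel verification are fine; your descent argument for $K_0\subset Z(G_{\FF_q})$ is somewhat more careful than what the paper writes, which simply asserts it.

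The difficulty you flag in the last paragraph, however, is not a matter of careful bookkeeping that can be "reconciled": the claim that each summand $W_0^\sigma$ carries only the two weights $\{a,b\}$ under $r'_{k,\sigma}\circ\mu$ is false in general. The weights of $r'_{k,\sigma}\circ\mu$ are $\langle\gamma(\chi_i),\mu\rangle=\langle\chi_i,\gamma^{-1}(\mu)\rangle$, where $\chi_i$ are the $T_k$-weights of $r'$ and $\gamma\in\Gal(\FF_q/\FF_p)$ corresponds to $\sigma$; since $\mu$ is in general not defined over $\FF_p$, $\gamma^{-1}(\mu)\neq\mu$, and these pairings need not lie in $\{a,b\}$. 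For a concrete failure, take $G=T=\mathrm{Res}_{\FF_{p^2}/\FF_p}\GG_m$, so $T_k\cong\GG_m^2$ with $\Gal(\FF_{p^2}/\FF_p)$ acting by the swap on $X^*(T_k)\cong\ZZ^2$; let $\mu=(1,0)\in X_*(T_k)$ and let $r'$ be the sum of the characters $(0,0)$, $(0,1)$, $(1,2)$. Then $\ker r'$ is trivial and $r'\circ\mu$ has weights $\{0,1\}$, so condition $(2)$ holds; but ${}^\sigma r'$ has weights $(0,0),(1,0),(2,1)$, and ${}^\sigma r'\circ\mu$ has weights $\{0,1,2\}$. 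The Galois sum (equivalently, your Weil restriction) therefore yields a representation with \emph{three} $\mu$-weights, not two. The pair $(T,\mu)$ is nonetheless maximal — take instead the two-dimensional $\FF_p$-representation of $T$ with $T_k$-weights $(0,1)$ and $(1,0)$, i.e. $\FF_{p^2}$ viewed as an $\FF_p$-vector space with $T$ acting by multiplication — so the proposition itself is not at issue, but the construction, not just its verification, has to change. It is worth noting that the paper's own proof also passes over this point, declaring the weight count ``clear,'' so it contains the same unaddressed gap that you correctly sensed.
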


\begin{proof}
Clearly \eqref{item-maxtype1} implies \eqref{item-maxtype2}. Conversely, let $r':G_k\to GL(W)$  be a representation as in \eqref{item-maxtype2}. Choosing a basis of $W$, we obtain a representation $r':G_k\to GL_{m,k}$. There exists $d\geq 1$ such that $r'$ is defined over $\FF_{q}$ where $q=p^d$, hence $r'$ comes from a morphism $r_1:G_{\FF_q}\to GL_{m,\FF_q}$. Then the representation
\begin{equation}
r:=\bigoplus_{\sigma \in \Gal(\FF_{q}/\FF_p)} {}^\sigma r_1
\end{equation}
is defined over $\FF_p$, so it comes from a morphism $r:G \to GL_{m',\FF_p}$ and it is clear that $r\circ \mu$ has only two weights.
\end{proof}

\begin{corollary}
Let $(G,\mu)$ be a maximal cocharacter datum over $\FF_p$. Let $H$ be a reductive group over $\FF_p$ and $\theta :G_k\to H_k$ an isomorphism. Then $(H,\theta \circ \mu)$ is maximal.
\end{corollary}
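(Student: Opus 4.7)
The plan is to reduce the statement to the equivalent criterion established in the preceding proposition, namely that maximality over $\FF_p$ can be tested by the existence of a representation defined only over $k$ with central kernel and a cocharacter having exactly two weights. Since this criterion is manifestly stable under $k$-isomorphisms of cocharacter data, the corollary will follow at once.

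Concretely, I would proceed as follows. By hypothesis $(G,\mu)$ is maximal, so there exists an $\FF_p$-representation $r: G \to GL(V)$ with central kernel such that $r_k \circ \mu$ has exactly two weights. Define the $k$-representation
\[
r' := r_k \circ \theta^{-1} : H_k \longrightarrow GL(V)_k.
\]
Then $\Ker(r') = \theta(\Ker(r_k))$ is central in $H_k$ since $\theta$ is a group isomorphism and the center maps to the center under an isomorphism. Moreover,
\[
r' \circ (\theta \circ \mu) = r_k \circ \theta^{-1} \circ \theta \circ \mu = r_k \circ \mu,
\]
which has exactly two weights by construction. Thus $(H, \theta \circ \mu)$ satisfies condition (2) of the previous proposition, and hence by the implication $(2) \Rightarrow (1)$ of that proposition, $(H, \theta \circ \mu)$ is maximal.

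There is essentially no obstacle: the whole point of the preceding proposition is to absorb the nontrivial Galois-descent argument (averaging over $\Gal(\FF_q/\FF_p)$), so that once one is allowed to work over $k$, isomorphism-invariance of the defining condition becomes tautological. In other words, the corollary is a formal consequence of the proposition combined with the fact that centrality of the kernel and the weight multiset of a composition $r \circ \mu$ are preserved under pre-composing with a $k$-isomorphism.
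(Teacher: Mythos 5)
Your argument is correct and is precisely the intended use of the preceding proposition: the paper omits the proof because, as you note, condition (2) of that proposition is manifestly invariant under $k$-isomorphisms of cocharacter data, and the $(2)\Rightarrow(1)$ direction then gives maximality of $(H,\theta\circ\mu)$ over $\FF_p$. No issues.
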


Let $(G,\mu)$ be maximal and let $r:G\to GL(V)$ over $\FF_p$ such that $\mu':=r_k \circ \mu$ has exactly two weights. Let $P\subset G_k$ denote the parabolic attached to $(G,\mu)$ by \S \ref{subsec cocharzipdata}. Similarly, write $P'_+,P'_-\subset GL(V)_k$ for the opposite parabolic subgroups attached to $\mu'$, as in \S \ref{subsec cocharzipdata}. Then $P'_-$ (resp. $P'_+$) is the stabilizer of a unique subspace $V_-\subset V_k$ (resp. $V_+\subset V_k$), and $V_k=V_-\oplus V_+$. Define a character $\chi'=\chi'(\mu, r) \in X^*(P'_-)$ by $\chi'(x)=\det(x|V_-)$.

\begin{definition}\label{charHTdef}
Let $(G,\mu)$ be a maximal cocharacter datum over $\FF_p$ and $\chi\in X^*(P)$. We say that $\chi$ is maximal if there exists $r:(G,\mu)\to (GL(V),\mu')$ as above such that $\chi=\chi'|_{P}$.
\end{definition}

\begin{rmk}
\label{rmk_hodge_maximal}
A Hodge-type zip datum (\cite[\Def~1.3.1]{Goldring-Koskivirta-Strata-Hasse}) is maximal.
\end{rmk}

\subsection{Stack of $G$-zips} \label{subsec gzips}
Let $\Zcal:=(G,P,Q,L,M,\varphi^n)$ be a zip datum of exponent $n$. Recall the following definition:

\begin{definition}[{\cite[\Def~3.1]{PinkWedhornZiegler-F-Zips-additional-structure}}]
A $G$-zip of type $\Zcal$ over a $k$-scheme $S$ is a tuple $\underline{\Ical}=(\Ical,\Ical_P,\Ical_Q,\iota)$ where $\Ical$ is a $G$-torsor over $S$, $\Ical_P\subset \Ical$ is a $P$-torsor over $S$, $\Ical_Q\subset \Ical$ is a $Q$-torsor over $S$, and $\iota: (\Ical_P)^{(p^n)}/R_u(P)^{(p^n)} \to \Ical_Q/R_u(Q)$ an isomorphism of $M$-torsors.
\end{definition}

The category of $G$-zips of type $\Zcal$ over $S$ is denoted by $\GZip^{\Zcal}(S)$. The $\GZip^{\Zcal}(S)$ give rise to a fibered category $\GZip^{\Zcal}$ over the category of $k$-schemes, which is a smooth stack of dimension $0$ (\cite[\Th~1.5]{PinkWedhornZiegler-F-Zips-additional-structure}).

We denote by $x\mapsto \overline{x}$ the natural projections $P\to L$ and $Q\to M$ given by reduction modulo the unipotent radical. The zip group is a subgroup of $P\times Q$ defined by:
\begin{equation}\label{zipgroup}
E_\Zcal:=\{(x,y)\in P\times Q, \varphi^n(\overline{x})=\overline{y}\}.
\end{equation}
The group $G\times G$ acts naturally on $G$ via $(a,b)\cdot g:= agb^{-1}$, and we let $P\times Q$ and $E$ act on $G$ by restricting this action. We can decompose $E_\Zcal$ as a semi-direct product:
\begin{equation}
(U  \times V) \rtimes L  \simeq   E_{\Zcal}, \ \ ((u,v),x)\longmapsto  (xu,\varphi(x)v). \\
\end{equation}
Recall the following theorem:
\begin{theorem}[\cite{PinkWedhornZiegler-F-Zips-additional-structure}, \Prop~3.11]
The stack of $G$-zips of type $\Zcal$ is isomorphic to the quotient stack $\left[E_\Zcal \backslash G\right]$.
\end{theorem}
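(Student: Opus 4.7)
The plan is to construct an explicit equivalence of stacks $[E_{\Zcal}\backslash G]\xrightarrow{\sim}\GZip^{\Zcal}$, by first defining a morphism $\Phi\colon G\to \GZip^{\Zcal}$ of stacks, and then showing that $\Phi$ is $E_{\Zcal}$-invariant and descends to an isomorphism on the quotient.

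For the construction of $\Phi$: given a $k$-scheme $S$ and a section $g\in G(S)$, I associate the $G$-zip $\Phi(g)=(\Ical,\Ical_P,\Ical_Q,\iota)$ whose underlying $G$-torsor is the trivial torsor $\Ical=G_S$, whose sub-$P$-torsor is the standard one $\Ical_P=P_S\subset G_S$, and whose sub-$Q$-torsor is the left translate $\Ical_Q=(gQ)_S\subset G_S$. Under the canonical identifications $(\Ical_P)^{(p^n)}/U^{(p^n)}\simeq L^{(p^n)}$ via $p\mapsto\overline{p}$ and $\Ical_Q/V\simeq M$ via $gq\mapsto \overline{q}$, the isomorphism $\iota$ is taken to be the restriction $\varphi^n\colon L^{(p^n)}\xrightarrow{\sim}M$ of the $p^n$-Frobenius, which makes sense precisely because $M=\varphi^n(L)$.

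Next I would verify $E_{\Zcal}$-invariance. For $(u,v)\in E_{\Zcal}$ (with $u\in P$, $v\in Q$ and $\varphi^n(\overline{u})=\overline{v}$), left-multiplication by $u$ on $G_S$ sends $P_S$ to itself (as $uP=P$) and sends $(gQ)_S$ to $(ugQ)_S=(ugv^{-1}Q)_S$, so it provides a morphism of $G$-torsors compatible with the parabolic sub-torsors of $\Phi(g)$ and $\Phi(ugv^{-1})$. On the Levi quotients this induces left translation by $\overline{u}$ on $L$ and by $\overline{u}$ on $M$, and the compatibility of these with the two copies of $\varphi^n$ is exactly the defining condition $\varphi^n(\overline{u})=\overline{v}$ of $E_{\Zcal}$. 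Thus $\Phi$ factors as $\overline{\Phi}\colon [E_{\Zcal}\backslash G]\to \GZip^{\Zcal}$.

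Finally, I would check that $\overline{\Phi}$ is an equivalence by verifying essential surjectivity and full faithfulness on fibers. For essential surjectivity, any $G$-zip $\underline{\Ical}$ is fppf (in fact étale) locally trivial: choose a local trivialization $\tau_P$ of $\Ical_P$, which extends uniquely to a trivialization of $\Ical$; then $\Ical_Q$ is identified with $(gQ)_S$ for a unique $g\in G(S)$, and the datum $\iota$ becomes automatically of the form prescribed in the construction of $\Phi(g)$ after adjusting the trivialization of $\Ical_Q$ (since both quotients are trivial $M$-torsors and any isomorphism between them is of the form left-multiplication by an element of $M$, which can be absorbed). For full faithfulness, one computes that the automorphism group of $\Phi(g)$ in $\GZip^{\Zcal}(S)$ matches the stabilizer of $g$ in $E_{\Zcal}$ acting on $G$ by $(u,v)\cdot g=ugv^{-1}$: an automorphism is a $G$-equivariant self-map of $G_S$ (hence right translation by some $h\in G(S)$) preserving both sub-torsors and intertwining $\iota$, and unwinding the conditions recovers exactly the relation that cuts out $E_{\Zcal}$ inside $P\times Q$. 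The main bookkeeping obstacle is keeping all the left/right conventions and the various canonical trivializations of $L$- and $M$-torsor quotients consistent; once those are fixed, the verification is a direct computation.
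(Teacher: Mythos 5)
This theorem is not proved in the paper; it is quoted verbatim as a citation to \cite[\Prop~3.11]{PinkWedhornZiegler-F-Zips-additional-structure}, so there is no in-paper argument to compare against. That said, your reconstruction follows the same strategy as the cited source: define the ``standard'' $G$-zip $\Phi(g)$ with trivial underlying torsor, $\Ical_P=P_S$, $\Ical_Q=(gQ)_S$ and $\iota$ the Frobenius under the canonical trivialisations; check that left translation by $u$ gives an isomorphism $\Phi(g)\to\Phi(ugv^{-1})$ for $(u,v)\in E_\Zcal$; and then verify that $\overline{\Phi}\colon[E_\Zcal\backslash G]\to\GZip^\Zcal$ is an equivalence by fppf-local triviality plus a computation of automorphism groups. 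This is the right skeleton and the way the result is actually proved. One genuine slip: in the full-faithfulness step you describe a $G$-equivariant self-map of the trivial right $G$-torsor $G_S$ as \emph{right} translation by $h$; such a map must be \emph{left} translation (right translation is not $G$-equivariant for the right action), and it is left translation by $h\in P(S)$, together with the constraint $g^{-1}hg\in Q$ and the Frobenius-compatibility on Levi quotients, that recovers the stabiliser of $g$ in $E_\Zcal$. You flag the left/right bookkeeping as the main hazard, and this is exactly where the hazard bites; the rest of the argument is sound.
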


\begin{lemma}\label{functo}
The above construction gives rise to a functor
\begin{equation}
\Re_n:\Dscr_n \longrightarrow (k-{\rm Stacks}), \ \Zcal \mapsto \GZip^\Zcal.
\end{equation}
\end{lemma}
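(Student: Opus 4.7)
The plan is to exploit the quotient stack description $\GZip^{\Zcal}\cong [E_\Zcal\backslash G]$ recalled above the lemma, and show that any morphism $f:\Zcal_1\to \Zcal_2$ in $\Dscr_n$ induces an equivariant morphism between these presentations.

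First I would verify that $f:G_1\to G_2$ restricts to a group homomorphism $E_{\Zcal_1}\to E_{\Zcal_2}$. Given $(x,y)\in E_{\Zcal_1}$, one has $f(x)\in P_2$ and $f(y)\in Q_2$ by the compatibility axioms of Definition of morphisms in $\Dscr_n$. The condition $f(U_1)\subset U_2$ means that the composition $P_1\xrightarrow{f}P_2\twoheadrightarrow L_2$ factors through $P_1\twoheadrightarrow L_1$, giving an induced map $\bar f:L_1\to M_2$-wait, $L_1\to L_2$; and similarly $\bar f:M_1\to M_2$ from $f(V_1)\subset V_2$. Using that $f$ is defined over $\FF_p$ and thus commutes with the relative Frobenius $\varphi^n$, one computes
\begin{equation*}
\varphi^n\bigl(\overline{f(x)}\bigr)=\varphi^n\bigl(\bar f(\bar x)\bigr)=\bar f\bigl(\varphi^n(\bar x)\bigr)=\bar f(\bar y)=\overline{f(y)},
\end{equation*}
so $(f(x),f(y))\in E_{\Zcal_2}$. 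Since $f$ is a group homomorphism, this restriction $E_{\Zcal_1}\to E_{\Zcal_2}$ is one as well.

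Next I would check equivariance of $f:G_1\to G_2$ with respect to the two $E$-actions by two-sided translation: for $(a,b)\in E_{\Zcal_1}$ and $g\in G_1$, $f(agb^{-1})=f(a)f(g)f(b)^{-1}$, which is immediate as $f$ is a group morphism. Therefore $f$ descends to a morphism of quotient stacks
\begin{equation*}
\tilde f:[E_{\Zcal_1}\backslash G_1]\longrightarrow [E_{\Zcal_2}\backslash G_2],
\end{equation*}
which under the isomorphism of \cite{PinkWedhornZiegler-F-Zips-additional-structure}, Prop.~3.11, is the morphism $\GZip^{\Zcal_1}\to \GZip^{\Zcal_2}$; in terms of torsors it sends $(\Ical,\Ical_P,\Ical_Q,\iota)$ to its pushforwards $(\Ical\times^{G_1}G_2,\Ical_P\times^{P_1}P_2,\Ical_Q\times^{Q_1}Q_2,\iota\times^{M_1}M_2)$, the last being well-defined precisely because $\varphi^n\circ\bar f=\bar f\circ\varphi^n$.

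Finally I would observe that $\Re_n(\id_{\Zcal})=\id$ and $\Re_n(f\circ g)=\Re_n(f)\circ\Re_n(g)$, which are both formal from the functoriality of taking quotients by compatible group actions. There is no hard step: the only substantive point is the Frobenius-compatibility displayed above, which is what makes the zip isomorphism $\iota$ transport under $f$. Hence $\Re_n$ is a well-defined functor.
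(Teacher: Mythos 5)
Your proof is correct and follows essentially the same route as the paper's: the paper invokes the general crossed-morphism criterion for maps of quotient stacks (conditions (1.1.1)–(1.1.2) of the cited reference) applied to the map $(g_1,p_1,q_1)\mapsto(f(p_1),f(q_1))$, which in this case is just the restriction $f|_{E_{\Zcal_1}}\colon E_{\Zcal_1}\to E_{\Zcal_2}$ that you verify directly. Your Frobenius-compatibility computation is exactly the content that makes this restriction land in $E_{\Zcal_2}$, so the two arguments coincide in substance, yours merely being more self-contained.
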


\begin{proof}
Let $(G_1,P_1,L_1,Q_1,M_1,\varphi^n) \to (G_2,P_2,L_2,Q_2,M_2,\varphi^n)$ be a morphism of exponent $n$ zip data. Define
\begin{equation}
\alpha:G_1\times E_1 \to E_2, \ (g_1,p_1,q_1)\mapsto (f(p_1),f(q_1)).
\end{equation}
The fact that $f$ is a morphism of zip data ensures that $\alpha$ is well-defined. Conditions (1.1.1) and (1.1.2) of \S1.1 in \cite{Goldring-Koskivirta-Strata-Hasse} are satisfied, hence $(f,\alpha)$ induces a morphism of stacks $[E_1\backslash G_1] \to [E_2\backslash G_2]$.
\end{proof}

By composition, we obtain a family of functors $\Re_n \circ \Zcal_n$ from $\Dscr^{\rm co}$ to the 2-category of $k$-stacks. We call the stack attached in this way to the cocharacter datum $(G,\mu)$ the stack of $G$-zips of type $\mu$ (and exponent $n$), and we denote it by $\GZip_n^\mu$. For $n=1$, we denote it simply by $\GZip^\mu$.

\begin{proposition} \ \label{harmless}
\begin{enumerate}
\item Let $\mu$ be a cocharacter of $G_k$ and $\mu'=\adj(g)\circ \mu$ for some $g\in G(k)$. Then the $k$-stacks $\GZip_n^\mu$ and $\GZip_n^{\mu'}$ are isomorphic.
\item More generally, let $\Zcal=(G,P,L,Q,M,\varphi^n)$ be a zip datum, and $g\in G(k)$. Then the $k$-stacks $\GZip^\Zcal$ and $\GZip^{{}^g \Zcal}$ are isomorphic.
\end{enumerate}
\end{proposition}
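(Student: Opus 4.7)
The plan is to reduce (1) to (2). For (1), if $\mu' = \operatorname{ad}(g)\circ\mu$ with $g\in G(k)$, then the opposite parabolic pair for $\mu'$ is $({}^gP_-, {}^gP_+)$ with Levi ${}^gL$, and applying $\varphi^n$ yields $\varphi^n({}^gP_+) = {}^{\varphi^n(g)}Q$ and $\varphi^n({}^gL) = {}^{\varphi^n(g)}M$. Thus $\Zcal_{\mu'} = {}^g\Zcal_\mu$ as zip data of exponent $n$, and the isomorphism in (2) specializes to (1).

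For (2), I use the quotient presentations $\GZip^{\Zcal} = [E_{\Zcal}\backslash G_k]$ and $\GZip^{{}^g\Zcal} = [E_{{}^g\Zcal}\backslash G_k]$ together with the recipe of \cite{Goldring-Koskivirta-Strata-Hasse} (\S1.1) for morphisms of such quotient stacks, as already recalled in the proof of Lemma~\ref{functo}. I introduce
\begin{equation*}
c_g \colon E_{\Zcal} \longrightarrow E_{{}^g\Zcal}, \quad (x,y)\longmapsto (gxg^{-1},\,\varphi^n(g)\,y\,\varphi^n(g)^{-1}),
\end{equation*}
and the scheme morphism
\begin{equation*}
\psi_g \colon G_k \longrightarrow G_k, \quad h\longmapsto g\,h\,\varphi^n(g)^{-1}.
\end{equation*}
Both $c_g$ and $\psi_g$ are isomorphisms (with inverses given by $g^{-1}$ and $\varphi^n(g)^{-1}$). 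To see that $c_g$ lands in $E_{{}^g\Zcal}$, note that conjugation by $g$ maps $P\to {}^gP$ and respects the Levi decomposition, so $\overline{gxg^{-1}} = g\bar{x}g^{-1}$ in ${}^gL$; similarly for $y$. The zip-condition $\varphi^n(\bar{x}) = \bar{y}$ then gives
\begin{equation*}
\varphi^n(\overline{gxg^{-1}}) = \varphi^n(g)\,\varphi^n(\bar{x})\,\varphi^n(g)^{-1} = \varphi^n(g)\,\bar{y}\,\varphi^n(g)^{-1} = \overline{\varphi^n(g)\,y\,\varphi^n(g)^{-1}},
\end{equation*}
so $c_g(x,y) \in E_{{}^g\Zcal}$.

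Next I verify equivariance: for $(x,y)\in E_{\Zcal}$ and $h\in G_k(R)$,
\begin{equation*}
\psi_g(xhy^{-1}) = g\,x\,h\,y^{-1}\,\varphi^n(g)^{-1} = (gxg^{-1})\bigl(g\,h\,\varphi^n(g)^{-1}\bigr)\bigl(\varphi^n(g)\,y\,\varphi^n(g)^{-1}\bigr)^{-1},
\end{equation*}
which is precisely $c_g(x,y)\cdot \psi_g(h)$ for the $E_{{}^g\Zcal}$-action on $G_k$. Applying the functoriality of quotient stacks under equivariant pairs $(c_g,\psi_g)$, one obtains a morphism $\GZip^{\Zcal}\to \GZip^{{}^g\Zcal}$, and its inverse comes from the pair $(c_{g^{-1}},\psi_{g^{-1}})$.

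The main potential obstacle is just the bookkeeping around the Frobenius twist on the $Q$-side: the right-hand factor must be conjugated by $\varphi^n(g)$ rather than $g$ itself, which is exactly what makes $\psi_g(h) = ghg^{-1}$ fail but $\psi_g(h) = gh\varphi^n(g)^{-1}$ succeed. Once this is set up correctly, both the well-definedness of $c_g$ and the equivariance become direct computations as above.
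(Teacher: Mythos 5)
Your proof is correct and takes essentially the same approach as the paper: the paper's one-line proof invokes exactly the map $x\mapsto gx\varphi^n(g)^{-1}$ (your $\psi_g$) and asserts it induces an isomorphism of stacks, while you supply the accompanying group isomorphism $c_g$ on the zip groups and verify the equivariance and well-definedness that the paper leaves to the reader. The reduction of (1) to (2) via the computation $\Zcal_{\mu'} = {}^g\Zcal_\mu$ is likewise what the paper's phrase ``more generally'' is implicitly appealing to.
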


\begin{proof}
It is easy to check that the map $G\to G$ given by $x\mapsto gx\varphi^n(g)^{-1}$ induces an isomorphism of stacks.
\end{proof}

\subsection{Stratification}\label{subsec stratif}
We recall the results of \cite{Pink-Wedhorn-Ziegler-zip-data}, but reformulated using our convention (\Def~\ref{defWframe}). Choose a system of representatives $\dot{w}\in G$ for $w\in W$ as in \Rmk~\ref{remBTcond}\eqref{item-cheval-syst}.

Let $\Zcal=(G,P,L,Q,M,\varphi^n)$ be a zip datum, and let $(B,T,z)$ be a $W$-frame of $\Zcal$. If $h\in G(k)$ is an element of the group, we denote by $\Ocal_\Zcal(h)$ the $E$-orbit of $h$ in $G$. Similarly, we write $o_\Zcal(h):=\left[ E_\Zcal \backslash \Ocal_\Zcal(h)\right]$ for the corresponding $k$-point in $\left[E_\Zcal \backslash G\right]$. For $w\in W$, define $G_w:=\Ocal_\Zcal(\dot{w} \dot{z}^{-1})$.

By \Ths~7.5, 11.3 in \loccitn, the map $w\mapsto G_w$ restricts to two bijections
\begin{align} 
{}^I W \to \{E_\Zcal \textrm{-orbits in }G\}\label{param}\\
W^J \to \{E_\Zcal \textrm{-orbits in }G\}\label{paramdual}
\end{align}

Furthermore, one has the following dimension formulas:
\begin{equation}\label{dimzipstrata}
\dim(G_w)= \ell(w)+\dim(P)\quad \textrm{for all }w\in {}^I W \cup W^J.
\end{equation}

\section{The stack of zip flags}\label{sec stofzf}

Let $\Zcal=(G,P,L,Q,M,\varphi^n)$ be a zip datum and set $q:=p^n$. We assume that $\Zcal$ admits a W-frame $(B,T,z)$, which we fix (this assumption is harmless by \Rmk~\ref{remBTcond} (2) and \Prop~ \ref{harmless}). As in subsection \ref{subsec frames}, we denote by $\Phi,\Phi^+,\Delta$ the roots, positive roots, simple roots with respect to $(B,T)$ respectively, and we denote by $I,J\subset \Delta$ the types of $P,Q$ respectively.

\subsection{Definition of the stack $\GF^{(\Zcal,P_0)}$}
We denote by $P_0$ a parabolic subgroup such that $B \subset P_0 \subset P$, and we let $I_0\subset I$ denote the type of $P_0$. We call $(\Zcal,P_0)$ a flagged zip datum.

\begin{definition}
A $G$-zip flag of type $(\Zcal,P_0)$ and exponent $n$ over a $k$-scheme $S$ is a pair $\hat{\Ical}=(\underline{\Ical},\Jcal)$ where $\underline{\Ical}=(\Ical,\Ical_P,\Ical_Q,\iota)$ is a $G$-zip of type $\Zcal$ and exponent $n$ over $S$, and $\Jcal\subset \Ical_P$ is a $P_0$-torsor.
\end{definition}
We denote by $\GF^{(\Zcal,P_0)}(S)$ the category of $G$-zip flags over $S$ of type $(\Zcal,P_0)$. By the same arguments as for $G$-zips, we obtain a stack $\GF^{(\Zcal,P_0)}$ over $k$, which we call the stack of $G$-zip flags of type $(\Zcal,P_0)$. There is a natural morphism of stacks
\begin{equation}
\pi_{P_0}:\GF^{(\Zcal,P_0)}\longrightarrow \GZip^\Zcal
\end{equation}
which forgets the $P_0$-torsor $J$ on the level of categories $\GF^{(\Zcal,P_0)}(S) \to \GZip^\Zcal(S)$. More generally, for each pair of parabolics $(P_0,P_1)$ such that $B\subset P_1\subset P_0\subset P$, we have a natural map of stacks
\begin{equation}\label{pimap}
\pi_{P_1,P_0}:\GF^{(\Zcal,P_1)}\longrightarrow \GF^{(\Zcal,P_0)}.
\end{equation}
Note that when $P_0=P$, the stack $\GF^{(\Zcal,P)}$ identifies with $\GZip^\Zcal$ through the isomorphism $\pi_P$. Hence we can identify the maps $\pi_{P_0}$ and $\pi_{P_0,P}$. We obtain a tower of stacks above $\GZip^\Zcal$. Define a subgroup $\hat{E}_{P_0}\subset E_\Zcal$ by:
\begin{equation}
\hat{E}_{P_0}:=\{(x,y)\in E_\Zcal, \ x\in P_0\}.
\end{equation}

To avoid confusion, we will also denote the group $\hat{E}_{P_0}$ by $\hat{E}^{\Zcal}_{P_0}$ to emphasize that it depends on the zip datum $\Zcal$.

\begin{theorem}\label{comdiagGF} \
\begin{enumerate}
\item The stack $\GF^{(\Zcal,P_0)}$ is a smooth stack of dimension $\dim(P/P_0)$, isomorphic to $\left[\hat{E}_{P_0} \backslash G\right]$.
\item \label{item-comdiag} For all parabolics $B\subset P_1\subset P_0\subset P$, we have a commutative diagram
$$\xymatrix@1@M=5pt@R=1cm@C=1.5cm{
\GF^{(\Zcal,P_1)} \ar[r]^{\pi_{P_1,P_0}} \ar[d]^\simeq & \GF^{(\Zcal,P_0)} \ar[d]^{\simeq} \\
\left[\hat{E}_{P_1} \backslash G\right] \ar[r] & \left[\hat{E}_{P_0} \backslash G\right]
}$$
where the vertical maps are isomorphisms and the lower horizontal map is induced by the natural projection.
\item \label{item-fibers} The map $\pi_{P_1,P_0}$ is proper and smooth, with fibers isomorphic to $P_0/P_1$.
\end{enumerate}
\end{theorem}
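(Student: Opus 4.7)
The plan is to prove (1) by presenting $\GF^{(\Zcal,P_0)}$ explicitly as a quotient stack, then obtain (2) and (3) as immediate consequences together with a small computation of fibers. Since the analogous result for $P_0 = B$ is proved in \cite{Goldring-Koskivirta-Strata-Hasse}, the strategy is to mimic that argument at the level of an arbitrary intermediate parabolic.

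For (1), I would first construct a smooth surjective atlas $\alpha: G \to \GF^{(\Zcal,P_0)}$. To $g \in G(S)$, associate the $G$-zip over $S$ with $\Ical = G_S$ trivial, $\Ical_P = P_S$ the canonical $P$-sub-torsor, $\Ical_Q = g Q_S$, and $\iota$ the isomorphism of $M$-torsors induced by left multiplication by $g$ together with $\varphi^n : L \to M$; endow it with the $P_0$-torsor $\Jcal := (P_0)_S \subset \Ical_P = P_S$. The key lemma to verify is that two elements $g_1, g_2 \in G(S)$ give isomorphic $G$-zip flags if and only if there exists $(x,y) \in \hat{E}_{P_0}(S)$ with $g_2 = x g_1 y^{-1}$, and that every $G$-zip flag arises from some $g$ after fpqc localization. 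This is a routine unwinding: an isomorphism of $G$-zip flags is a compatible isomorphism of the ambient $G$-torsor which sends $\Ical_P \to \Ical_P$ preserving the $P_0$-sub-torsor $\Jcal$, and sends $\Ical_Q \to \Ical_Q$ compatibly with $\iota$; in a trivialization this is exactly the condition $x \in P_0$, $y \in Q$, $\varphi^n(\overline{x}) = \overline{y}$. Hence $\alpha$ induces an isomorphism $[\hat{E}_{P_0} \backslash G] \xrightarrow{\sim} \GF^{(\Zcal,P_0)}$. The dimension follows by computing $\dim G - \dim \hat{E}_{P_0} = (\dim P + \dim U) - (\dim P_0 + \dim V) = \dim(P/P_0)$ (using $\dim U = \dim V$), and smoothness is inherited from $G$ via the smooth action.

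For (2), commutativity is essentially tautological: the inclusions $\hat{E}_{P_1} \subset \hat{E}_{P_0} \subset E_{\Zcal}$ give quotient maps $[\hat{E}_{P_1} \backslash G] \to [\hat{E}_{P_0} \backslash G] \to [E_\Zcal \backslash G]$, and under the identification of (1) these correspond on the level of objects to $(\underline{\Ical}, \Jcal_1) \mapsto (\underline{\Ical}, \Jcal_1 \times^{P_1} P_0) \mapsto \underline{\Ical}$, which is exactly $\pi_{P_1,P_0}$ followed by $\pi_{P_0}$. For (3), note that the projection $\hat{E}_{P_0} \to P_0$, $(x,y) \mapsto x$, is surjective with kernel $V = R_u(Q)$, and similarly for $\hat{E}_{P_1}$, yielding $\hat{E}_{P_0}/\hat{E}_{P_1} \cong P_0/P_1$. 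Therefore the fiber of $[\hat{E}_{P_1} \backslash G] \to [\hat{E}_{P_0} \backslash G]$ over a point is isomorphic to $P_0/P_1$, which is a smooth proper variety (a partial flag variety of the parabolic $P_0$); smoothness and properness of $\pi_{P_1,P_0}$ follow by descent from the atlas.

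The only step requiring real care is the equivalence of groupoids in (1), that is, verifying that the evident action of $\hat{E}_{P_0}$ on $G$ captures precisely the isomorphisms of $G$-zip flags and that every flag is locally trivial in the required sense. This is where one must carefully track the compatibility of $\iota$ with the $M$-reduction of $P$- and $Q$-torsors after refining $\Ical_P$ to the $P_0$-torsor $\Jcal$; all other steps (dimension count, commutativity, fiber identification) are formal once this presentation is in place.
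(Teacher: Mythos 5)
Your proposal is correct and follows essentially the same route as the paper, which simply cites \cite[\Th~5.1.3]{Goldring-Koskivirta-Strata-Hasse} (the $P_0=B$ case) as an "entirely similar" argument; that argument is precisely the atlas-and-groupoid-equivalence construction you spell out. The only cosmetic difference is that the paper deduces $\dim \GF^{(\Zcal,P_0)} = \dim(P/P_0)$ from part (3) and $\dim \GZip^\Zcal = 0$, whereas you compute $\dim G - \dim \hat{E}_{P_0}$ directly; both are immediate once the quotient-stack presentation is in hand.
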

\begin{proof}
The proof is entirely similar to the proof of \cite[\Th~5.1.3]{Goldring-Koskivirta-Strata-Hasse}. The assertion on the dimension follows  from \eqref{item-fibers} and $\dim(\GZip^\Zcal)=0$.
\end{proof}

In other words, \eqref{item-comdiag} states that there is an isomorphism of towers $\GF^{(\Zcal,P_\bullet)} \simeq \left[\hat{E}_{P_\bullet} \backslash G\right]$. We set $P_{0,L}:= P_0\cap L$, and similarly $B_L:=B\cap L$. The inclusion $G\to G\times L$, $g\mapsto (g,1)$ induces an isomorphism of quotient stacks
\begin{equation}\label{isomGF}
\left[\hat{E}_{P_0} \backslash G\right] \longrightarrow \left[E \backslash \left( G\times L/P_{0,L} \right)\right].
\end{equation}
In this description, the same group $E$ acts at all levels of the tower, and the natural projections are $E$-equivariant.

\subsection{Coarse flag strata}\label{subsec coarse}
We identify without explicit mention the stack of zip flags of type $(\Zcal,P_0)$ and the quotient stack $\left[\hat{E}_{P_0} \backslash G\right]$ (\Th~\ref{comdiagGF}). Define a parabolic subgroup $Q_0$ by
\begin{equation}\label{Q0}
Q_{0}:=(L\cap P_0)^{(p^n)}R_u(Q)
\end{equation}
Note that ${}^{z}\!B\subset Q_0\subset Q$ and $\hat{E}_{P_0} \subset P_0\times Q_0$. Denote by $J_0\subset J$ the type of $Q_0$. Hence we have a natural smooth surjective morphism of stacks:
\begin{equation}\label{coarsedef}
\psi_{P_0} : \GF^{(\Zcal,P_0)}=\left[\hat{E}_{P_0} \backslash G\right] \to \left[(P_0\times Q_0) \backslash G \right].
\end{equation}
We define the coarse flag strata of the stack $\GF^{(\Zcal,P_0)}$ as the fibers of the morphism $\psi_{P_0}$ (endowed with the reduced structure). Hence the coarse flag strata are the quotient stacks $[\hat{E}_{P_0}\backslash C]$ for $C$ a $P_0\times Q_0$-orbit in $G$. Define a closed coarse flag stratum as the preimage of the closure of a point in $\left[(P_0\times Q_0) \backslash G \right]$ by $\psi_{P_0}$ (endowed with the reduced structure). Using the isomorphism (\ref{isomGF}), we obtain a map
\begin{equation}
\left[E \backslash \left( G\times L/P_{0,L} \right)\right] \to \left[(P_0\times Q_0) \backslash G \right].
\end{equation}
which is induced by the map: $\tilde{\psi}_{P_0}: G\times L \to G, \ \ (g,x)\mapsto x^{-1}g\varphi^n(x)$.

Recall that the underlying topological space of $\left[(P_0\times Q_0) \backslash G \right]$ is isomorphic to $^{I_0}W^{J_0}$ endowed with the topology attached to the Bruhat order. The parametrization is given by the map:
\begin{equation}
^{I_0}W^{J_0} \to \left[(P_0\times Q_0) \backslash G \right], \ \ w\mapsto P_0\dot{w}\dot{z}^{-1}Q_0.
\end{equation}
For $w\in {}^{I_0}W^{J_0}$, we denote by $C_{P_0,w}:=P_0\dot{w}\dot{z}^{-1}Q_0$ the corresponding $P_0\times Q_0$-orbit, and define $$\ZZ_{P_0,w}:=\left[ \hat{E}_{P_0} \backslash C_{P_0,w} \right]$$
the corresponding coarse flag stratum. Similarly, using the description given by isomorphism (\ref{isomGF}), we also define:
\begin{equation}\label{coarsevar}
\HH_{P_0,w}:=\{(g,x)\in G\times L/P_{0,L}, \ x^{-1}g\varphi^n(x)\in C_{P_0,w} \}.
\end{equation}
Thus $\HH_{P_0,w}$ is a locally closed $E$-stable subvariety of $G\times L/P_{0,L}$ and $\ZZ_{P_0,w}=[E \backslash \HH_{P_0,w}]$.

\begin{proposition}\ \label{coarseprop}
\begin{enumerate}
\item The coarse flag strata are irreducible, smooth, and their closures are normal.
\item The coarse closed flag strata coincide with the closures of coarse flag strata.
\item Set $I_w:=J_0\cap w^{-1}I_0 w$. One has $\dim(\HH_{P_0,w}) =\ell(w)+\ell(w_{0,J_0})-\ell(w_{0,I_w})-\dim(P_0)$.
\end{enumerate}
\end{proposition}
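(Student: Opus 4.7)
The proof will work with the quotient-stack description $\ZZ_{P_0,w}=[\hat E_{P_0}\backslash C_{P_0,w}]$, where $C_{P_0,w}=P_0\dot w\dot z^{-1}Q_0$ is a single $(P_0\times Q_0)$-orbit in $G$, and reduce all questions about $\HH_{P_0,w}$ to corresponding questions about $C_{P_0,w}$. A useful preliminary is the identity $C_{P_0,w}=P_0\dot wP_{J_0}\dot z^{-1}$, which follows from $\dot z^{-1}Q_0\dot z=P_{J_0}$ (using $B\subset\dot z^{-1}Q_0$ together with the defining property of $J_0$).

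For (1), $C_{P_0,w}$ is the image of the smooth connected group $P_0\times Q_0$ under the orbit map $(p,q)\mapsto p\dot w\dot z^{-1}q^{-1}$, so it is smooth and irreducible; passage to the quotient stack $[\hat E_{P_0}\backslash \cdot]$ preserves both properties. Normality of $\overline{C_{P_0,w}}$ will be deduced from the classical fact that closures of double parabolic orbits in $G$ are normal: concretely, $\overline{C_{P_0,w}}$ is the pullback along the smooth projection $G\to G/Q_0$ of a $P_0$-orbit closure (a Schubert variety) in the partial flag variety $G/Q_0$, and Schubert varieties in partial flag varieties are normal (Ramanan--Ramanathan, or by Frobenius splitting).

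For (2), both notions of ``closed coarse flag stratum'' will be shown to produce the same substack $[\hat E_{P_0}\backslash\overline{C_{P_0,w}}]$. The point $y$ of $[(P_0\times Q_0)\backslash G]$ attached to $w$ corresponds to the orbit $C_{P_0,w}$, and $\overline{\{y\}}$ to $\overline{C_{P_0,w}}$; since $\psi_{P_0}$ is induced by the inclusion $\hat E_{P_0}\subset P_0\times Q_0$, its preimage equals $[\hat E_{P_0}\backslash\overline{C_{P_0,w}}]$. Conversely, $\overline{C_{P_0,w}}$ is $(P_0\times Q_0)$-stable, hence $\hat E_{P_0}$-stable, so the closure of $\ZZ_{P_0,w}=[\hat E_{P_0}\backslash C_{P_0,w}]$ inside $\GF^{(\Zcal,P_0)}$ is $[\hat E_{P_0}\backslash\overline{C_{P_0,w}}]$.

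For (3), the plan is a fibration argument. Consider the second projection $\pi\colon\HH_{P_0,w}\to L/P_{0,L}$, $(g,xP_{0,L})\mapsto xP_{0,L}$. Its fibre over $xP_{0,L}$ is $\{g\in G:x^{-1}g\varphi^n(x)\in C_{P_0,w}\}=xC_{P_0,w}\varphi^n(x)^{-1}$, a translate of $C_{P_0,w}$; hence $\pi$ is surjective with constant fibre dimension $\dim C_{P_0,w}$. Combined with $\dim(L/P_{0,L})=\dim(P/P_0)$ (which follows from $R_u(P)\subset P_0$), this yields
\[
\dim \HH_{P_0,w} \;=\; \dim(P/P_0) + \dim(P_{I_0}\dot w P_{J_0}).
\]
It remains to evaluate $\dim(P_{I_0}\dot w P_{J_0})$ via the orbit map $(p_1,p_2)\mapsto p_1\dot wp_2^{-1}$: this equals $\dim P_{I_0}+\dim P_{J_0}-\dim(P_{I_0}\cap{}^{\dot w}P_{J_0})$, and the $T$-roots of $P_{I_0}\cap{}^{\dot w}P_{J_0}$ are $(\Phi^+\cup -\Phi_{I_0}^+)\cap w(\Phi^+\cup -\Phi_{J_0}^+)$. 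The minimality $w\in{}^{I_0}W^{J_0}$ kills the two mixed cross-terms, leaving $|\Phi^+\cap w\Phi^+|=|\Phi^+|-\ell(w)$ and $|\Phi_{I_0}^+\cap w\Phi_{J_0}^+|$. The main obstacle will be the root-combinatorial identification $|\Phi_{I_0}^+\cap w\Phi_{J_0}^+|=\ell(w_{0,I_w})$: the plan is to use the definition $I_w=J_0\cap w^{-1}I_0w$ to exhibit $w$ as a length-preserving bijection $\Phi_{I_w}^+\to\Phi_{I_0}^+\cap w\Phi_{J_0}^+$, after which assembling the pieces produces the asserted dimension formula.
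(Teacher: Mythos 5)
Your proofs of parts (1) and (2) are correct and take a more self-contained route than the paper, which simply cites \cite[\Th~5.3.3]{Goldring-Koskivirta-Strata-Hasse} for (1)--(2) and \cite[\Prop~1.12]{Wedhorn-bruhat} for (3). Your reduction of normality of $\overline{C_{P_0,w}}$ to normality of Schubert varieties in $G/Q_0$ via the smooth $Q_0$-torsor $G\to G/Q_0$ is a clean argument, and the identification of ``closed coarse flag strata'' with closures follows correctly because $G\to[\hat E_{P_0}\backslash G]$ is open.

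For part (3), however, the final sentence --- ``assembling the pieces produces the asserted dimension formula'' --- is not true, and you never actually assemble them. The fibration argument gives $\dim\HH_{P_0,w}=\dim(P/P_0)+\dim(P_{I_0}\dot wP_{J_0})$; your root count for $w\in{}^{I_0}W^{J_0}$ gives $\dim(P_{I_0}\cap{}^{\dot w}P_{J_0})=\dim T+\bigl(|\Phi^+|-\ell(w)\bigr)+\ell(w_{0,I_w})$ by Kilmoyer's lemma (with $\Phi_{I_0}^+\cap w\Phi_{J_0}^+=\Phi^+_{I_0\cap wJ_0}$, of cardinality $\ell(w_{0,I_w})$). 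Substituting $\dim P_{I_0}=\dim P_0$ and $\dim P_{J_0}=\dim B+\ell(w_{0,J_0})$ and using $\dim(P/P_0)=\dim(L/P_{0,L})$, one gets
\[
\dim\HH_{P_0,w}=\ell(w)+\ell(w_{0,J_0})-\ell(w_{0,I_w})+\dim(P),
\]
with the sign $+\dim(P)$, not $-\dim(P_0)$ as printed in the Proposition. The printed formula cannot be correct as stated: taking $P_0=B$ it gives $\ell(w)-\dim B$, which is negative; a direct check (e.g.\ $G=GL_2$, $P_0=P=B$) gives $\dim\HH_{B,w}=\ell(w)+\dim B=\ell(w)+\dim P$, matching your assembled answer. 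Moreover $\HH_{P_0,w}$ contains the corresponding fine stratum $H_{P_0,w}$ of dimension $\ell(w)+\dim(P)$ for $w\in{}^{I_0}W^{J_0}$, which the printed formula contradicts. So the approach you outline is sound and actually derives the correct dimension, but you should have carried out the last computation: as written, your proof asserts a match with a formula it does not reproduce, and you miss the opportunity to notice and correct what appears to be a sign/typo in the paper.
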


\begin{proof}
The first two parts are a simple generalization of \Th~5.3.3 in \cite{Goldring-Koskivirta-Strata-Hasse}. The third one is a consequence of \Prop~1.12 in \cite{Wedhorn-bruhat}.
\end{proof}

For parabolics $B\subset P_1\subset P_0$ with corresponding subgroups ${}^{z}\!B\subset Q_1\subset Q_0$, we have a commutative diagram
$$\xymatrix@1@M=5pt@R=1cm@C=2cm{
\GF^{(\Zcal,P_1)} \ar[d]^{\pi_{P_1,P_0}} \ar[r]^{\psi_{P_1}} & \left[(P_1\times Q_1) \backslash G \right]\ar[d]\\
\GF^{(\Zcal,P_0)} \ar[r]^{\psi_{P_0}} & \left[(P_0\times Q_0) \backslash G \right]
}$$
where the rightmost vertical map is the natural projection. Hence the maps $(\psi_{P_0})_{P_0}$ define a morphism of towers
\begin{equation}
\psi:\GF^{(\Zcal,P_\bullet)}\to \left[(P_\bullet \times Q_\bullet ) \backslash G \right].
\end{equation}

\section{Fine flag strata}\label{sec ffs}

In this section, we define a stratification of $\GF^{(\Zcal,P_0)}$ which is finer that the stratification given by $\psi_{P_0}$.

\subsection{Definition of fine flag strata} \label{subsec def fine strata}
For a parabolic  $B\subset P_0\subset P$ with type $I_0\subset I$, we always denote by $Q_0$ the parabolic attached to $P_0$ by formula (\ref{Q0}) and by $J_0$ its type. Let $L_0$ (resp. $M_0$) be the Levi subgroup of $P_0$ (resp. $Q_0$) containing $T$. It follows from the definition that $\varphi^n(L_0)=M_0$. Hence the tuple
\begin{equation}\label{zipparab}
\Zcal_{0}:=(G,P_0,L_0,Q_0,M_0,\varphi^n)
\end{equation}
is a zip datum of exponent $n$. We obtain a stack $\GZip^{\Zcal_{0}}$ and a zip group $E_{\Zcal_{0}}$ such that $\hat{E}_{P_0}\subset E_{\Zcal_{0}} \subset P_0\times Q_0$. Hence we obtain a smooth surjective morphism of stacks
\begin{equation}\label{Psimap}
\Psi_{P_0}:\GF^{(\Zcal,P_0)}\to \GZip^{\Zcal_{0}}\simeq \left[E_{\Zcal_0} \backslash G \right]
\end{equation}
which induces a factorization of $\psi_{P_0}$:
\begin{equation}
\GF^{(\Zcal,P_0)} \xrightarrow{\Psi_{P_0}} \GZip^{\Zcal_{0}}\simeq \left[\hat{E}_{P_0} \backslash G \right] \xrightarrow{\beta} \left[(P_0\times Q_0) \backslash G \right],
\end{equation}
where $\beta$ is the natural projection. Define fine flag strata as the fibers of the map $\Psi_{P_0}$ (endowed with the reduced structure). Fine flag strata coincide with coarse flag strata when $P_0=B$ and with usual zip strata when $P_0=P$.

We can decompose the groups $E_{\Zcal_0}$ and $\hat{E}_{P_0}$ as semi-direct products:
\begin{align}\label{decompE1}
& (U_0  \times V_0) \rtimes L_0 \simeq   E_{\Zcal_0}, \ \ ((u,v),x)\longmapsto  (xu,\varphi(x)v) \\
&(U_0  \ltimes V) \rtimes L_0   \simeq   \hat{E}_{P_0}, \ \ ((u,v),x) \longmapsto  (xu,\varphi(x\theta^{P}_{L}(u))v) \label{decompE2}
\end{align}
where $\theta^{P}_L : P \to L$ denotes the reduction modulo the unipotent radical $U$.

\begin{lemma}\label{affinebun}
One has an isomorphism $E_{\Zcal_0}/\hat{E}_{P_0}\simeq M \cap V_0 \simeq V_0/V \simeq \AA^{\dim(P/P_0)}$.
\end{lemma}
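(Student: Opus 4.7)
The plan is to prove each of the three isomorphisms in turn.

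\emph{First,} I will identify $V_0/V$ with $M \cap V_0$. The key observation is that $Q_0 \cap M$ is a parabolic subgroup of $M$ with Levi subgroup $M_0$ and unipotent radical $M \cap V_0$. To verify this, I intersect the defining identity~\eqref{Q0} with $M$: since $V \cap M = 1$, one finds $Q_0 \cap M = \varphi^n(L \cap P_0) = M_0 \cdot \varphi^n(U_0 \cap L)$, and the second factor $\varphi^n(U_0 \cap L)$ is unipotent, normalised by $M_0$, and coincides with $M \cap V_0$. The resulting semidirect decomposition $V_0 = (M \cap V_0) \ltimes V$ then makes the composition $M \cap V_0 \hookrightarrow V_0 \twoheadrightarrow V_0/V$ an isomorphism of $k$-group schemes.

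\emph{Second,} to obtain $M \cap V_0 \simeq \AA^{\dim(P/P_0)}$ I will invoke the standard fact that the unipotent radical of a parabolic subgroup of a reductive $k$-group is a smooth connected unipotent $k$-group, hence isomorphic as a variety to an affine space of the appropriate dimension. For the dimension count I will use that the Frobenius $\varphi^n: L \to M$ is a purely inseparable isogeny (giving $\dim(M \cap V_0) = \dim(U_0 \cap L)$) together with the fact that $U_0 \cap L$ is the unipotent radical of the parabolic $L \cap P_0$ of $L$, so that
\[
\dim(U_0 \cap L) \;=\; \dim(L/(L \cap P_0)) \;=\; \dim(P/P_0),
\]
the last equality because the inclusion $L \hookrightarrow P$ induces an isomorphism $L/(L \cap P_0) \simeq P/P_0$ (as $U \subset B \subset P_0$).

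\emph{Third,} for the isomorphism $E_{\Zcal_0}/\hat{E}_{P_0} \simeq V_0/V$, I will apply the second isomorphism theorem to the projection $\pi: E_{\Zcal_0} \to P_0$, $(x,y) \mapsto x$. This is surjective with kernel $\{1\} \times V_0 \simeq V_0$, and it restricts to a surjection $\hat{E}_{P_0} \twoheadrightarrow P_0$ with kernel $\{1\} \times V \simeq V$. Since $Q_0$ normalises its unipotent radical $V_0$, the kernel $V_0$ is normal in $E_{\Zcal_0}$. Surjectivity of $\hat{E}_{P_0} \to P_0$ then gives $E_{\Zcal_0} = V_0 \cdot \hat{E}_{P_0}$, while clearly $V_0 \cap \hat{E}_{P_0} = V$. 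The second isomorphism theorem accordingly delivers a bijective morphism $V_0/V \to E_{\Zcal_0}/\hat{E}_{P_0}$ between smooth $k$-schemes of equal dimension, which is therefore an isomorphism.

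I expect the only subtle point to be the first step, namely the identification of $M \cap V_0$ as precisely the unipotent radical of the parabolic $Q_0 \cap M$ of $M$; this requires carefully unpacking the formula~\eqref{Q0} and applying the compatibility of $\varphi^n$ with the Levi decomposition of $L \cap P_0$. Once this point is settled, the remaining steps are routine applications of the structure theory of reductive groups and of elementary group-scheme quotients.
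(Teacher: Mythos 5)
Your proof is organised around the same three isomorphisms as the paper's, and the first two steps -- identifying $Q_0 \cap M$ as a parabolic of $M$ with unipotent radical $M \cap V_0$ and Levi $M_0$ via formula~\eqref{Q0}, obtaining the semidirect splitting $V_0 = (M \cap V_0)\ltimes V$, and computing $\dim(M\cap V_0) = \dim(U_0\cap L) = \dim(L/(L\cap P_0)) = \dim(P/P_0)$ -- are all correct and carefully justified. The third step also aims at the same map the paper uses, namely the morphism $V_0 \to E_{\Zcal_0}/\hat{E}_{P_0}$ induced by $v\mapsto(1,v)$; the paper reads off surjectivity and the kernel $V$ directly from the explicit semi-direct decompositions~\eqref{decompE1} and~\eqref{decompE2}, while you instead establish $V_0\cdot\hat{E}_{P_0}=E_{\Zcal_0}$ and $V_0\cap\hat{E}_{P_0}=V$ by hand. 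Both routes are fine and amount to the same computation.

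The one genuine gap is the final sentence: ``a bijective morphism between smooth $k$-schemes of equal dimension is therefore an isomorphism'' is false in characteristic $p$ -- the relative Frobenius $\AA^1\to\AA^1$, $x\mapsto x^p$, is a bijective morphism of smooth curves that is not an isomorphism, and the whole setting here is saturated with Frobenius twists, so one cannot wave this away. You should instead either (a) invoke the isomorphism theorem for coset spaces of algebraic groups: for closed subgroups $H_1\subset H_2\subset G$ and a closed subgroup $N\subset G$ with $N\cdot H_1 = H_2$ (scheme-theoretically) and $N\cap H_1 = N_1$, the natural map $N/N_1 \to H_2/H_1$ is an isomorphism of quotient schemes (this is a statement about fppf quotient sheaves, not a dimension count); or (b) simply do what the paper does and read off from~\eqref{decompE1}--\eqref{decompE2} that, as a variety, $E_{\Zcal_0}\cong U_0\times V_0\times L_0$ and $\hat{E}_{P_0}\cong U_0\times V\times L_0$ compatibly, whence the quotient is visibly the factor $V_0/V$. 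Either fix is short; the issue is purely one of justification, not of substance.
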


\begin{proof}
Using \eqref{decompE1} and \eqref{decompE2}, one sees that the map $
V_0 \to E_{\Zcal_0}/\hat{E}_{P_0}$ taking $u\in V_0$ to the class of $(1,u)\in E_{\Zcal_0}$ is surjective, and induces an isomorphism $V_0/V \simeq E_{\Zcal_0}/\hat{E}_{P_0}$.
It is also clear that the inclusion $M \cap V_0 \subset V_0$ induces an isomorphism $M \cap V_0 \simeq V_0/V$. Since $M\cap V_0$ is unipotent, it is isomorphic as a variety to $\AA^r$, with $r=\dim(M\cap V_0)=\dim(P/P_0)$.
\end{proof}

For parabolic subgroups $B\subset P_1 \subset P_0 \subset P$, one has inclusions
$$\xymatrix@=5pt@M=2pt@R=2pt{
\hat{E}_{P_1} & \subset & \hat{E}_{P_0} \\
\cap && \cap \\
E_{\Zcal_1} & & E_{\Zcal_0}}$$
but in general there is no inclusion between $E_{\Zcal_0}$ and $E_{\Zcal_1}$.

\begin{proposition}
Let $B\subset P_1 \subset P_0 \subset P$ be parabolic subgroups. Then the map $\pi_{P_1,P_0}$ sends a fine flag stratum for $P_1$ to a union of fine flag strata for $P_0$.
\end{proposition}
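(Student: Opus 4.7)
The plan is to reduce the proposition to an explicit inclusion of subsets of $P_0 \times Q$ and then invoke Lemma~\ref{affinebun}. Using the identifications $\GF^{(\Zcal, P_i)} \simeq [\hat{E}_{P_i} \backslash G]$ from Theorem~\ref{comdiagGF} and $\GZip^{\Zcal_i} \simeq [E_{\Zcal_i} \backslash G]$, the map $\pi_{P_1, P_0}$ is the one induced by $\hat{E}_{P_1} \subset \hat{E}_{P_0}$, and $\Psi_{P_i}$ comes from $\hat{E}_{P_i} \subset E_{\Zcal_i}$. A fine flag stratum $S_1 \subset \GF^{(\Zcal, P_1)}$ corresponds to an $E_{\Zcal_1}$-orbit $O_1 \subset G$; on topological spaces, its image under $\pi_{P_1, P_0}$ corresponds to the $\hat{E}_{P_0}$-saturation $\hat{E}_{P_0} \cdot O_1 \subset G$. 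A union of fine flag strata for $P_0$ corresponds to an $E_{\Zcal_0}$-stable subset. Hence the claim is equivalent to showing that $\hat{E}_{P_0} \cdot O_1$ is $E_{\Zcal_0}$-stable.

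Since $\hat{E}_{P_0} \subset E_{\Zcal_0}$, this stability follows as soon as one proves the inclusion
\begin{equation*}
E_{\Zcal_0} \subset \hat{E}_{P_0} \cdot E_{\Zcal_1}
\end{equation*}
as subsets of $P_0 \times Q$, because then $E_{\Zcal_0} \cdot O_1 \subset \hat{E}_{P_0} \cdot E_{\Zcal_1} \cdot O_1 = \hat{E}_{P_0} \cdot O_1$. To establish the inclusion, I would invoke Lemma~\ref{affinebun}: its proof shows that every coset of $\hat{E}_{P_0}$ in $E_{\Zcal_0}$ has a representative of the form $(1, v'')$ with $v'' \in M \cap V_0$. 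Hence any $e_0 \in E_{\Zcal_0}$ factors as $e_0 = h \cdot (1, v'')$ with $h \in \hat{E}_{P_0}$ and $v'' \in M \cap V_0$.

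The crucial step is to check that $(1, v'') \in E_{\Zcal_1}$ for every $v'' \in M \cap V_0$. The inclusion $Q_1 \subset Q_0$ reverses on unipotent radicals to give $V_0 = R_u(Q_0) \subset R_u(Q_1) = V_1$, so $v'' \in V_1 \subset Q_1$; trivially $1 \in P_1$; and the projection of $v'' \in R_u(Q_1)$ to the Levi $M_1$ is trivial, so the defining identity $\varphi^n(\bar 1_{L_1}) = \bar{v''}_{M_1}$ of $E_{\Zcal_1}$ holds. This yields $E_{\Zcal_0} \subset \hat{E}_{P_0} \cdot E_{\Zcal_1}$ and completes the argument.

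The main difficulty is conceptual rather than computational: one must recognize that the geometric statement about strata is equivalent to the clean group-theoretic inclusion $E_{\Zcal_0} \subset \hat{E}_{P_0} \cdot E_{\Zcal_1}$, which is \emph{not} a containment of groups (in general $E_{\Zcal_1} \not\subset E_{\Zcal_0}$) but only of sets. Once this reformulation is found, Lemma~\ref{affinebun} provides the right coset representatives, and the inclusion $V_0 \subset V_1$ — going in the opposite direction to $L_1 \subset L_0$ — is exactly what is needed to place those representatives inside $E_{\Zcal_1}$.
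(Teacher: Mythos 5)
Your proof is correct, and it takes a somewhat more streamlined route than the paper's. The paper argues directly: it writes $E_{\Zcal_0} = (U_0 \times V_0) \rtimes L_0$, observes that stability of $X := \hat{E}_{P_0}\cdot O_1$ under twisted $L_0$-conjugation is immediate, and then checks stability under an arbitrary $(u,v) \in U_0 \times V_0$ by an explicit commutation computation that produces elements $(u',v') \in U_0 \times V_0 \subset U_1 \times V_1 \subset E_{\Zcal_1}$. Your proof instead isolates the clean set-theoretic inclusion $E_{\Zcal_0} \subset \hat{E}_{P_0}\cdot E_{\Zcal_1}$ as the key point and extracts it directly from the coset description in Lemma~\ref{affinebun}, needing only $V_0 \subset V_1$ (rather than also $U_0 \subset U_1$). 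This is the same circle of ideas — what each proof really exploits is that $E_{\Zcal_0}$ differs from $\hat{E}_{P_0}$ only by the unipotent slice $V_0/V$, whose representatives land in $E_{\Zcal_1}$ — but your packaging is more transparent and has the advantage of mirroring the ``generated by'' phrasing that the authors themselves use in the proof of \Prop~\ref{trick}.

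Two small points worth spelling out. First, when you pass from ``$E_{\Zcal_0}\subset \hat{E}_{P_0}E_{\Zcal_1}$'' to ``$E_{\Zcal_0}\cdot X \subset X$'', you should record the trivial but necessary observation that $E_{\Zcal_0}\cdot X = E_{\Zcal_0}\cdot\hat{E}_{P_0}\cdot O_1 = E_{\Zcal_0}\cdot O_1$ because $\hat{E}_{P_0}\subset E_{\Zcal_0}$. Second, Lemma~\ref{affinebun} as stated produces representatives for the cosets $E_{\Zcal_0}/\hat{E}_{P_0}$, which reads most naturally as left cosets $e\hat{E}_{P_0}$ and would give $e_0 = (1,v'')h$, not $e_0 = h(1,v'')$. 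That would establish $E_{\Zcal_0}\subset E_{\Zcal_1}\hat{E}_{P_0}$, which does not immediately yield what you need. The gap is easily filled — the set $\{1\}\times V_0$ is a subgroup, so taking inverses converts the left-coset statement $E_{\Zcal_0} = (\{1\}\times V_0)\cdot\hat{E}_{P_0}$ into the right-coset statement $E_{\Zcal_0} = \hat{E}_{P_0}\cdot(\{1\}\times V_0)$ — but you should say this explicitly, since the ordering is exactly the point on which the argument turns.
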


\begin{proof}
Denote by $\Zcal_1$ the zip datum attached to $P_1$ as in (\ref{zipparab}). If $C$ is an $E_{\Zcal_1}$-orbit in $G$, we must show that the set $X:=\hat{E}_{P_0}\cdot C$ is $E_{\Zcal_0}$-stable. Since $X$ is clearly stable by $\varphi^n$-conjugation by $L_0$, it suffices to show that $X$ is stable by elements $(u,v)\in R_uP_0\times R_uQ_0$. Let $x\in X$ be an element. Since $R_uP\times R_uQ\subset \hat{E}_{P_0}$, we may assume that there exists $a\in P_{0,L}$ such that $y:=ax\varphi^n(a)^{-1}$ is in $C$. Write $a=a'r$ for $a'\in L_0$ and $r\in R_uP_0$. Then
$$uxv=ua^{-1}y\varphi^n(a)v=a'^{-1}u'yv'\varphi^n(a')$$
for some elements $(u',v')\in R_uP_0\times R_uQ_0$. Since $R_uP_0\times R_uQ_0 \subset R_uP_1\times R_uQ_1$, we deduce that $u'yv'\in C$, and hence $uxv\in X$ as claimed.
\end{proof}

For a parabolic  $B\subset P_0\subset P$ with type $I_0\subset I$ and $w\in W$, define
\begin{equation}
Z_{P_0,w}:=\Psi_{P_0}^{-1}\left( o_{\Zcal_0}(\dot{w}\dot{z}^{-1})  \right)
\end{equation}
endowed with the reduced structure as locally closed substack. Similarly to the parametrizations \eqref{param} and \eqref{paramdual} for $E$-orbits in $G$, the map $w\mapsto Z_{P_0,w}$ restricts to two bijections
\begin{align}
{}^{I_0}W \longrightarrow &\{\textrm{fine flag strata of }\GFZP\} \label{paramffs} \\
W^{J_0} \longrightarrow &\{\textrm{fine flag strata of }\GFZP\} \label{paramffsdual}.
\end{align}
For $w\in {}^{I_0}W\cup W^{J_0}$, define a locally closed subvariety of $G\times (L/P_{0,L})$ by
\begin{equation}\label{finevar}
H_{P_0,w}:=\{(g,x)\in G\times L/P_{0,L}, \ x^{-1}g\varphi^n(x)\in \Ocal_{\Zcal_0}(\dot{w}\dot{z}^{-1}) \}.
\end{equation}
It is $E$-stable and one has $Z_{P_0,w}=\left[E\backslash H_{P_0,w} \right]$.

\begin{proposition} 
For $w\in {}^{I_0}W\cup W^{J_0}$, the variety $H_{P_0,w}$ (resp. the stack $Z_{P_0,w}$) is smooth of dimension $\ell(w)+\dim(P)$ (resp. $\ell(w)+\dim(P)-\dim(G)$).
\end{proposition}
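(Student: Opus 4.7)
The plan is to realize the fine flag stratum $Z_{P_0,w}$ as a stack quotient of a zip stratum for the derived zip datum $\Zcal_0=(G,P_0,L_0,Q_0,M_0,\varphi^n)$, and then deduce the statement for $H_{P_0,w}$ by pulling back along an atlas. Under the identification $\GFZP\simeq[\hat{E}_{P_0}\backslash G]$ of Theorem~\ref{comdiagGF} together with $\GZip^{\Zcal_0}\simeq[E_{\Zcal_0}\backslash G]$, the smooth surjection $\Psi_{P_0}$ of~(\ref{Psimap}) is simply the morphism induced by the inclusion $\hat{E}_{P_0}\subset E_{\Zcal_0}$ and the identity on $G$. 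Consequently the set-theoretic fiber of $\Psi_{P_0}$ over the point $o_{\Zcal_0}(\dot{w}\dot{z}^{-1})$ is the locally closed substack $[\hat{E}_{P_0}\backslash \Ocal_{\Zcal_0}(\dot{w}\dot{z}^{-1})]$ of $[\hat{E}_{P_0}\backslash G]$, which endowed with the reduced structure is $Z_{P_0,w}$.

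Next I will apply the results of Section~\ref{subsec stratif} to the zip datum $\Zcal_0$ of exponent $n$ in place of $\Zcal$. The triple $(B,T,z)$ remains a $W$-frame for $\Zcal_0$: indeed $B\subset P_0$ by hypothesis on $P_0$, and the inclusion ${}^z B\subset Q_0$ together with the Frobenius condition in Definition~\ref{defWframe} follow formally from the corresponding properties for $\Zcal$ and from the very definition~(\ref{Q0}) of $Q_0$. Formula~(\ref{dimzipstrata}) then shows that $\Ocal_{\Zcal_0}(\dot{w}\dot{z}^{-1})$ is a smooth, locally closed subvariety of $G$ of dimension $\ell(w)+\dim(P_0)$ for every $w\in{}^{I_0}W\cup W^{J_0}$. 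Since $\hat{E}_{P_0}\subset E_{\Zcal_0}$ is a closed subgroup preserving this orbit, the quotient stack $Z_{P_0,w}$ is smooth with $\dim Z_{P_0,w}=\dim \Ocal_{\Zcal_0}(\dot{w}\dot{z}^{-1})-\dim \hat{E}_{P_0}$. I read $\dim\hat{E}_{P_0}$ off Theorem~\ref{comdiagGF}: the equality $\dim\GFZP=\dim(P/P_0)$ applied to the presentation $\GFZP=[\hat{E}_{P_0}\backslash G]$ yields $\dim\hat{E}_{P_0}=\dim G-\dim P+\dim P_0$. Substituting gives $\dim Z_{P_0,w}=\ell(w)+\dim P-\dim G$.

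For $H_{P_0,w}$ I switch to the alternative presentation $\GFZP\simeq [E\backslash(G\times L/P_{0,L})]$ of~(\ref{isomGF}). A direct check of the isomorphism of Theorem~\ref{comdiagGF} shows that the composite $G\times L/P_{0,L}\to\GFZP\xrightarrow{\Psi_{P_0}}\GZip^{\Zcal_0}$ sends $(g,xP_{0,L})$ to the class of $x^{-1}g\varphi^n(x)$ in $[E_{\Zcal_0}\backslash G]$, so definition~(\ref{finevar}) exactly exhibits $H_{P_0,w}$ as the pullback of $Z_{P_0,w}$ along the atlas $G\times L/P_{0,L}\to[E\backslash(G\times L/P_{0,L})]$. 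Since the atlas is an $E$-torsor, smoothness ascends to $H_{P_0,w}$ and dimensions add: $\dim H_{P_0,w}=\dim Z_{P_0,w}+\dim E$. Because $\GZip^\Zcal=[E\backslash G]$ has dimension $0$ (Section~\ref{subsec gzips}), we have $\dim E=\dim G$, and therefore $\dim H_{P_0,w}=\ell(w)+\dim P$. The only step I expect to require genuine care is the explicit matching of $\Psi_{P_0}$ under the two quotient-stack descriptions of $\GFZP$; once that is recorded, what remains is a bookkeeping dimension count on stack quotients and presents no substantive obstacle.
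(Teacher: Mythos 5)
Your proof is correct and takes essentially the same approach as the paper: the paper deduces the proposition from Lemma~\ref{affinebun} (the affine bundle $E_{\Zcal_0}/\hat{E}_{P_0}\simeq \AA^{\dim(P/P_0)}$) and the dimension formula (\ref{dimzipstrata}) applied to $\Zcal_0$, which is exactly the chain $Z_{P_0,w}=[\hat{E}_{P_0}\backslash\Ocal_{\Zcal_0}(\dot{w}\dot{z}^{-1})]$, $\dim\Ocal_{\Zcal_0}(\dot{w}\dot{z}^{-1})=\ell(w)+\dim P_0$, and the dimension bookkeeping you carry out. Your explicit check that $(B,T,z)$ remains a $W$-frame for $\Zcal_0$ and your computation of $\dim\hat{E}_{P_0}$ from \Th~\ref{comdiagGF} are simply unpacking what the paper's citation of Lemma~\ref{affinebun} supplies in one step.
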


\begin{proof}
This follows immediately from \Prop~\ref{affinebun} and the dimension formula (\ref{dimzipstrata}).
\end{proof}

If different zip data come into play, we write a superscript indicating the zip datum one considers. For example, consider parabolics $B\subset P_1 \subset P_0 \subset P$. We have defined a zip datum $\Zcal_0=(G,P_0,L_0,Q_0,M_0,\varphi^n)$. Hence it makes sense to speak of $G$-zip flags of type $(\Zcal_0,P_1)$. Therefore we define $\pi^{\Zcal_0}_{P_1}$ and $\Psi^{\Zcal_0}_{P_1}$ as the maps attached to $P_1$ with respect to the zip datum $\Zcal_0$ by (\ref{pimap}) and (\ref{Psimap}) respectively.

\begin{proposition}\label{trick}
Let $B\subset P_1 \subset P_0 \subset P$ be parabolic subgroups. There is a natural map $$\Psi^{\Zcal/P_0}_{P_1}:\GF^{(\Zcal,P_1)}\to \GF^{(\Zcal_0,P_1)}$$ such that the following diagram commutes:
$$\xymatrix@1@M=8pt@C=10pt{
& \GZip^{\Zcal_1}  & \\
\GF^{(\Zcal,P_1)}\ar[rr]^{\Psi^{\Zcal/P_0}_{P_1}} \ar[ru]^-{\Psi^{\Zcal}_{P_1}} \ar[d]^-{\pi^{\Zcal}_{P_1,P_0}} && \GF^{(\Zcal_0,P_1)} \ar[ul]_-{\Psi^{\Zcal_0}_{P_1}} \ar[d]^-{\pi^{\Zcal_0}_{P_1}}\\
\GF^{(\Zcal,P_0)} \ar[rr]^-{\Psi^{\Zcal}_{P_0}} && \GZip^{\Zcal_0}
}$$
Moreover, the commutative square above is cartesian.
\end{proposition}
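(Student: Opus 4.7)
The plan is to realize $\Psi^{\Zcal/P_0}_{P_1}$ as the morphism of quotient stacks induced by a subgroup inclusion. First I verify that $\hat{E}^{\Zcal}_{P_1} \subset \hat{E}^{\Zcal_0}_{P_1}$ as subgroups of $G\times G$. For $(x,y) \in \hat{E}^{\Zcal}_{P_1}$, let $\bar{x}$ and $\bar{y}$ denote the reductions of $x,y$ modulo $R_u(P)$ and $R_u(Q)$. Since $x \in P_1 \subset P_0$ one has $\bar{x} \in P_{0,L}$, hence $\bar{y} = \varphi^n(\bar{x}) \in P_{0,L}^{(p^n)}$; combined with $y \in Q$ and the definition $Q_0 = P_{0,L}^{(p^n)} R_u(Q)$ in \eqref{Q0}, this forces $y \in Q_0$. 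The $\Zcal_0$-compatibility at the level of $L_0 \to M_0$ then follows by pushing the equality $\varphi^n(\bar x) = \bar y$ through the further projections $P_{0,L} \twoheadrightarrow L_0$ and $Q_0 \twoheadrightarrow M_0$, which commute with $\varphi^n$. The resulting morphism $[\hat{E}^{\Zcal}_{P_1}\backslash G]\to [\hat{E}^{\Zcal_0}_{P_1}\backslash G]$ is my definition of $\Psi^{\Zcal/P_0}_{P_1}$.

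Commutativity of the diagram reduces to the tautology that a diagram of subgroup inclusions commutes inside $G\times G$: each of the five stacks appearing is of the form $[H\backslash G]$ with $H$ one of $\hat{E}^{\Zcal}_{P_1}$, $\hat{E}^{\Zcal}_{P_0}$, $\hat{E}^{\Zcal_0}_{P_1}$, $E_{\Zcal_1}$, $E_{\Zcal_0}$, and entirely analogous checks to the one above establish that every arrow in the diagram is induced by such an inclusion (in particular $\hat{E}^{\Zcal}_{P_1} \subset E_{\Zcal_1}$, $E_{\Zcal_1} \subset \hat{E}^{\Zcal_0}_{P_1}$, and $\hat{E}^{\Zcal}_{P_0} \subset E_{\Zcal_0}$).

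To show the square is cartesian, I appeal to the general principle that if $H_1,H_2$ are closed subgroups of $H_0$ acting on a scheme $X$ with $H_1\cdot H_2 = H_0$, then $[H_1\backslash X]\times_{[H_0\backslash X]}[H_2\backslash X] \simeq [(H_1\cap H_2)\backslash X]$. I apply this to $H_0 = E_{\Zcal_0}$, $H_1 = \hat{E}^{\Zcal}_{P_0}$, $H_2 = \hat{E}^{\Zcal_0}_{P_1}$, $X = G$. The identity $\hat{E}^{\Zcal}_{P_1} = \hat{E}^{\Zcal}_{P_0}\cap \hat{E}^{\Zcal_0}_{P_1}$ is immediate from the definitions, since $\Zcal$-compatibility together with $x\in P_1$ already forces $y\in Q_0$ and the $\Zcal_0$-compatibility. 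For the factorization $\hat{E}^{\Zcal}_{P_0}\cdot\hat{E}^{\Zcal_0}_{P_1} = E_{\Zcal_0}$, using \eqref{decompE1} I write an element of $E_{\Zcal_0}$ as $(\ell_0 u_0,\varphi^n(\ell_0)v_0)$ with $\ell_0\in L_0$, $u_0 \in R_u(P_0)$, $v_0 \in R_u(Q_0)$, and then decompose
\[
(\ell_0 u_0,\varphi^n(\ell_0) v_0) = (\ell_0,\varphi^n(\ell_0))\cdot (u_0,v_0),
\]
observing that the first factor lies in $\hat{E}^{\Zcal}_{P_0}$ (since $\ell_0 \in L_0\subset P_0$) and the second in $\hat{E}^{\Zcal_0}_{P_1}$ (since $R_u(P_0)\subset R_u(P_1)\subset P_1$ and its image under either reduction is trivial).

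The main obstacle is the careful bookkeeping of the several ``bar'' reductions and Levi projections entering the definitions of the groups $\hat{E}^{\Zcal}_{P_i}$ and $E_{\Zcal_i}$, and in particular verifying that the zip compatibilities descend consistently along $L\twoheadrightarrow L_0\twoheadrightarrow L_1$ and $M\twoheadrightarrow M_0\twoheadrightarrow M_1$. Once the two group-theoretic identities above are in hand, both the commutativity and the cartesian property are formal consequences of the realization of each stack as $[H\backslash G]$.
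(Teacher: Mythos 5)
Your proof is correct and takes essentially the same approach as the paper: the paper's own argument just asserts the bicartesian square of zip groups (intersection $\hat{E}^{\Zcal}_{P_1} = \hat{E}^{\Zcal}_{P_0}\cap \hat{E}^{\Zcal_0}_{P_1}$ and generation $\hat{E}^{\Zcal}_{P_0}\cdot\hat{E}^{\Zcal_0}_{P_1}=E_{\Zcal_0}$), which are precisely the two group-theoretic facts you verify before invoking the standard fiber-product formula for quotient stacks.
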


\begin{proof}
This follows simply from the bicartesian square
$$\begin{tikzcd}
\hat{E}^{\Zcal}_{P_1} \arrow[hookrightarrow]{d} \arrow[hookrightarrow]{r} & \hat{E}^{\Zcal_0}_{P_1} \arrow[hookrightarrow]{d} \\
\hat{E}^{\Zcal}_{P_0} \arrow[hookrightarrow]{r} & E_{\Zcal_0}
\end{tikzcd}$$
We mean by this that the subgroups $\hat{E}^{\Zcal_0}_{P_1}$ and $\hat{E}^{\Zcal}_{P_0}$ generate $E_{\Zcal_0}$ and their intersection is  exactly $\hat{E}^{\Zcal}_{P_1}$.
\end{proof}
\begin{rmk}
The morphism $\Psi^{\Zcal/P_0}_{P_1}$ induces thus a bijection between fine flag strata of $\GF^{(\Zcal,P_1)}$ and $\GF^{(\Zcal_0,P_1)}$. Hence, any statement stable by base change regarding intermediate parabolics $P_1\subset P_0$ can be reduced to a result involving just the parabolic $P_1$, by changing the zip datum to $\Zcal_0$.
\end{rmk}



\Prop~\ref{trick} can be generalized as follows. The proof is similar, so we skip it. Consider four parabolics $B\subset P_\square\subset P_\blacksquare \subset P_\blacklozenge \subset P_\lozenge \subset P$. There is a commutative diagram with a cartesian square in the middle:
$$\xymatrix@1@M=8pt@C=10pt{
& \GZip^{\Zcal_\square}  & \\
\GF^{(\Zcal_\lozenge,P_\square)} \ar[ru]^-{\Psi^{\Zcal_\lozenge}_{P_\square}} \ar[rr]^{\Psi^{\Zcal_\lozenge/P_\blacklozenge}_{P_\square}} \ar[d]^-{\pi^{\Zcal_\lozenge}_{P_\square,P_\blacksquare}} && \GF^{(\Zcal_\blacklozenge,P_\square)}  \ar[d]^-{\pi^{\Zcal_\blacklozenge}_{P_\square,P_\blacksquare}}  \ar[ul]_-{\Psi^{\Zcal_\blacklozenge}_{P_\square}}\\
\GF^{(\Zcal_\lozenge,P_\blacksquare)} \ar[rr]^-{\Psi^{\Zcal_\lozenge/P_\blacklozenge}_{P_\blacksquare}} \ar[rd]_-{\Psi^{\Zcal_\lozenge}_{P_\blacksquare}} && \GF^{(\Zcal_\blacklozenge,P_\blacksquare)} \ar[dl]^-{\Psi^{\Zcal_\blacklozenge}_{P_\blacksquare}}\\
& \GZip^{\Zcal_\blacksquare}  &
 }$$
\subsection{Minimal, cominimal fine flag strata}
In general, it is difficult to determine the image of a fine flag stratum by $\pi_{P_1,P_0}$. In this section we give a partial result.

\begin{definition}
Let  $B\subset P_1\subset P_0\subset P$ be parabolics of type $I_1\subset I_0\subset I$. A $P_0$-minimal (resp. $P_0$-cominimal) fine flag stratum of $\GF^{(\Zcal,P_1)}$ is a stratum of the form $Z_{P_1,w}$ for $w\in {}^{I_0} W$ (resp.  $w\in W^{J_0}$).
\end{definition}

This generalizes the concept of minimal strata introduced by Ekedahl and Van der Geer in \cite{EkedahlGeerEO} and also studied by the authors in \cite{Goldring-Koskivirta-Strata-Hasse}.

\begin{proposition} \label{propimagemin} Let $B\subset P_1\subset P_0\subset P$ be parabolics of type $I_1\subset I_0\subset I$. For all $w\in {}^{I_0}W\cup W^{J_0}$, one has:
\begin{equation}
\pi_{P_1,P_0}\left(Z_{P_1,w}\right)=Z_{P_0,w}.
\end{equation}
Moreover, the map $\pi_{P_1,P_0}: Z_{P_1,w} \to Z_{P_0,w}$ is finite.
\end{proposition}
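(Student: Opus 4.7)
My plan has three parts.

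\emph{Reduction.} First I would use the cartesian square of Proposition~\ref{trick} to reduce to the case $P_0=P$, $\Zcal_0=\Zcal$. Under the bijection between fine flag strata induced by $\Psi^{\Zcal/P_0}_{P_1}$, the map $\pi^\Zcal_{P_1,P_0}$ restricted to $Z^\Zcal_{P_1,w}$ is the base change of $\pi^{\Zcal_0}_{P_1}$ restricted to $Z^{\Zcal_0}_{P_1,w}$ along the smooth surjection $\Psi^\Zcal_{P_0}\colon Z^\Zcal_{P_0,w}\to G_w/E_{\Zcal_0}$, where $G_w/E_{\Zcal_0}$ denotes the zip stratum indexed by $w$ in $\GZip^{\Zcal_0}$. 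Since both surjectivity and finiteness are stable under base change, it suffices to prove the special case $P_0=P$: for any zip datum $\Zcal$, any sub-parabolic $B\subset P_1\subset P$, and any $w\in {}^IW\cup W^J$, the map $\pi_{P_1}\colon Z_{P_1,w}\to Z_{P,w}=G_w/E_\Zcal$ is surjective and finite.

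\emph{Main step.} Surjectivity is easy: the pair $(\dot{w}\dot{z}^{-1},1)\in G\times L/P_{1,L}$ lies in $H_{P_1,w}$ and projects to $\dot{w}\dot{z}^{-1}\in G_w$, and $G_w/E_\Zcal$ is a single $E_\Zcal$-orbit. The substantial content is the containment of the image of $\pi_{P_1}|_{Z_{P_1,w}}$ in $Z_{P,w}$. Using $Z_{P_1,w}=[E\backslash H_{P_1,w}]$ and the identity $(x,\varphi^n(x))\in E_\Zcal$ for every $x\in L$, this reduces to the orbit inclusion
\begin{equation*}
\Ocal_{\Zcal_1}(\dot{w}\dot{z}^{-1})\subset G_w.
\end{equation*}
I would prove this via the semi-direct decomposition $E_{\Zcal_1}=(U_{P_1}\times V_{Q_1})\rtimes L_1$: the $L_1$-action is absorbed by the elements $(l,\varphi^n(l))\in E_\Zcal$ for $l\in L_1\subset L$; the subgroup $R_u(P)\times R_u(Q)$ of $U_{P_1}\times V_{Q_1}$ already lies in $E_\Zcal$; the remaining ``excess'' factor in $L\times M$, coming from the unipotent radical of $P_1\cap L$ and its $\varphi^n$-image, is handled by a root-theoretic argument using the hypothesis $w\in{}^IW\cup W^J$: conjugation by $\dot{w}\dot{z}^{-1}$ transports the relevant root subgroups of $L$ into root subgroups contained in $R_u(Q)$ or $R_u(P)$, allowing them to be rewritten as elements of $E_\Zcal$ acting on $\dot{w}\dot{z}^{-1}$.

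\emph{Finiteness.} By Theorem~\ref{comdiagGF}, $\pi_{P_1}$ is proper with flag-variety fibers $P/P_1$, hence its restriction $Z_{P_1,w}\to Z_{P,w}$ is proper. The two stacks have the same dimension $\ell(w)+\dim P-\dim G$. Since $E_\Zcal$ acts transitively on $G_w$, all fibers of $\pi_{P_1}|_{Z_{P_1,w}}$ are isomorphic; the dimension equality forces these fibers to be zero-dimensional, so the map is quasi-finite, and proper plus quasi-finite gives finite. The main obstacle will be the root-theoretic verification of the orbit inclusion in the middle step: the careful bookkeeping between the Weyl-frame element $z$, the combinatorial position of $w$ in ${}^IW\cup W^J$, and the Frobenius twist is where the technical content resides.
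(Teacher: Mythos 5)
Your reduction via Proposition~\ref{trick} to the case $P_0=P$ is sound and parallels one step of the paper's argument, and your finiteness argument (properness plus equidimensionality plus homogeneity of the fibers under $E_\Zcal$) is valid and in fact slightly cleaner than the paper's factorization argument. However, the heart of the proposal has a genuine gap: the orbit inclusion $\Ocal_{\Zcal_1}(\dot{w}\dot{z}^{-1})\subset G_w$, which you correctly identify as the substantive content (since $E_{\Zcal_1}$ is \emph{not} contained in $E_\Zcal$), is only sketched and not actually proved. You describe what a root-theoretic verification might look like and then explicitly concede that ``the careful bookkeeping $\ldots$ is where the technical content resides.'' A description of where the technical content lies is not a proof of that content, so the main containment $\pi(Z_{P_1,w})\subset Z_{P,w}$ remains unestablished.

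The paper sidesteps exactly this difficulty. Rather than proving the orbit inclusion for arbitrary $P_1$, it invokes the already-proved special case $P_1=B$, $P_0=P$ (Proposition~5.4.3 of \cite{Goldring-Koskivirta-Strata-Hasse}, where the root-theoretic work was done once for the Borel), then uses Proposition~\ref{trick} to get $P_1=B$ with arbitrary $P_0$, and finally handles general $P_1\subset P_0$ formally by factoring $\pi_{B,P_0}=\pi_{P_1,P_0}\circ\pi_{B,P_1}$: surjectivity of $\pi_{B,P_1}\colon Z_{B,w}\to Z_{P_1,w}$ together with $\pi_{B,P_0}(Z_{B,w})=Z_{P_0,w}$ forces $\pi_{P_1,P_0}(Z_{P_1,w})=Z_{P_0,w}$, with no new combinatorics required. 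If you wish to pursue your route, you must actually carry out the verification that each generator of $E_{\Zcal_1}$ coming from the ``excess'' piece $(L\cap R_u(P_1))\times(M\cap R_u(Q_1))$ moves $\dot{w}\dot{z}^{-1}$ within $G_w$; alternatively, adopt the paper's cleaner strategy of reducing to $P_1=B$ and citing the prior result, which removes the need to redo any root bookkeeping.
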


\begin{proof}
In the special case $P_1=B$ and $P_0=P$, the result follows from \cite[\Prop~5.4.3]{Goldring-Koskivirta-Strata-Hasse}. Using \Prop~\ref{trick}, we obtain the result for arbitrary $P_0$ and $P_1=B$. Now, let $P_1\subset P_0$ be arbitrary. By the special case, we have in particular $\pi_{B,P_1}(Z_{B,w})=Z_{P_1,w}$. Hence:
\begin{equation}
\pi_{P_1,P_0}\left(Z_{P_1,w}\right)=\pi_{P_1,P_0}\left(\pi_{B,P_1}\left(Z_{B,w}\right)\right)=\pi_{B,P_0}\left(Z_{B,w}\right)=Z_{P_0,w}
\end{equation}
where the last equality follows again from the special case.

For the finiteness of the map $\pi_{P_1,P_0}: Z_{P_1,w} \to Z_{P_0,w}$, we reduce again to the case $P_1=B$, where it is known by \cite[\Prop~5.4.3]{Goldring-Koskivirta-Strata-Hasse}. We use the varieties $H_{P_0,w}$ defined in (\ref{finevar}) to use classical algebraic geometry. Notice that $\pi_{P_1,P_0}$ induces a surjective proper map $\tilde{\pi}_{P_1,P_0}:H_{P_1,w} \to H_{P_0,w}$, and it suffices to show that this map is quasi-finite. The map $\tilde{\pi}_{B,P_0}$ factors as a composition $H_{B,w} \xrightarrow{\tilde{\pi}_{B,P_1}} H_{P_1,w} \xrightarrow{\tilde{\pi}_{P_1,P_0}} H_{P_0,w}$. Since $\tilde{\pi}_{B,P_1}$ is surjective and the composition is quasi-finite, the second map must be quasi-finite. This concludes the proof.
\end{proof}

\subsection{Closure relations for $\GZip^\Zcal$}
For $w\in {}^IW$, write $G^w:=\Ocal(\dot{w}\dot{z}^{-1})$ as in \S\ref{subsec stratif}. Denote by $\leq$ the Bruhat order on $W$. For $w,w'\in {}^IW$, define $w\preceq w'$ if and only if there exists $y\in W_I$ such that $yw'\varphi^n(y)^{-1} \leq w$. Recall the following:
\begin{theorem}[{\cite[\Th~6.2]{Pink-Wedhorn-Ziegler-zip-data}}]\label{thmPWZorder}
The Zariski closure of $G^w$ is the union of all $G^{w'}$ with $w'\preceq w$.
\end{theorem}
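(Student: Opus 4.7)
The plan is to follow Pink-Wedhorn-Ziegler by analyzing how $E_\Zcal$-orbits in $G$ decompose into Bruhat cells, with the $W_I$-twisted action of the Levi subgroup providing the equivalence relation that refines Bruhat order to the partial order $\preceq$.

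The starting point is the Bruhat stratification of $G$ twisted by $\dot{z}^{-1}$:
\begin{equation*}
G = \bigsqcup_{w\in W} B\dot{w}\dot{z}^{-1}\cdot{}^z\!B,
\end{equation*}
whose closures are governed by the usual Bruhat order. Using the semi-direct product decomposition \eqref{decompE1} and the $W$-frame condition (\Def~\ref{defWframe}\eqref{item-Wframe4}), the small zip group $E_{\Zcal_B} := E_\Zcal \cap (B \times {}^z\!B)$ acts on $\dot{w}\dot{z}^{-1}$ with orbit equal to this Bruhat cell on $\bar{k}$-points, thanks to the surjectivity of $\varphi^n: T \to T$ on $\bar{k}$-points. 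Hence every $E_\Zcal$-orbit in $G$ is a union of Bruhat cells, and the Zariski closure is controlled by the Bruhat order together with an identification between cells in the same $E_\Zcal$-orbit.

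The crucial identification comes from the $L$-component of the $E_\Zcal$-action: $g \mapsto \ell g \varphi^n(\ell)^{-1}$ for $\ell \in L$. Taking $\ell = \dot{y}$ for $y \in W_I = W_L$ and tracking through a Chevalley system (\Rmk~\ref{remBTcond}\eqref{item-cheval-syst}), one rewrites $\dot{y}\dot{w}\dot{z}^{-1}\varphi^n(\dot{y})^{-1}$ modulo $B \times {}^z\!B$. The key computation is to commute $\varphi^n(\dot{y})^{-1}$ past $\dot{z}^{-1}$; the $W$-frame condition $\varphi^n(B\cap L) = {}^z\!B \cap M$ together with $z \in W^J$ shows that this commutation preserves the Bruhat stratification and identifies the resulting element with the representative of $yw\varphi^n(y)^{-1}$. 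Consequently, the cells $B\dot{w}\dot{z}^{-1}\cdot{}^z\!B$ and $B\dot{w}'\dot{z}^{-1}\cdot{}^z\!B$ lie in the same $E_\Zcal$-orbit iff $w' = y w \varphi^n(y)^{-1}$ for some $y \in W_I$, recovering the $W_I$-twisted $\varphi^n$-conjugacy classes in $W$; each such class contains a unique element of ${}^I W$, consistent with the parametrization \eqref{param}.

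Combining the two ingredients: for $w, w' \in {}^I W$, one has $G^{w'} \subseteq \overline{G^w}$ iff some Bruhat cell comprising $G^{w'}$ is in the Bruhat closure of some cell comprising $G^w$, iff there exists $y \in W_I$ with $yw'\varphi^n(y)^{-1} \leq w$, which is exactly $w' \preceq w$. The main obstacle is the twist computation of the third paragraph: carefully commuting $\dot{z}^{-1}$ past $\varphi^n(\dot{y})$ via the Chevalley system to see that the net effect on Bruhat indices is precisely $w \mapsto yw\varphi^n(y)^{-1}$, rather than some conjugate variant. This is the technical heart of \cite{Pink-Wedhorn-Ziegler-zip-data}, and is where the precise form of the $W$-frame axiom \eqref{item-Wframe4} enters in an essential way.
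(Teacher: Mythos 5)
Your proposed argument is not the same as the one in Pink--Wedhorn--Ziegler, and unfortunately it has a genuine gap in its central combinatorial step. The claim that two Bruhat cells $B\dot w\dot z^{-1}\,{}^z\!B$ and $B\dot w'\dot z^{-1}\,{}^z\!B$ lie in the same $E_\Zcal$-orbit if and only if $w'=yw\varphi^n(y)^{-1}$ for some $y\in W_I$ is false, and so is the assertion that each $W_I$-twisted $\varphi^n$-conjugacy class meets ${}^I W$ exactly once. A simple dimension count already kills both. By \eqref{dimzipstrata}, the stratum $G^e=\Ocal_\Zcal(\dot z^{-1})$ has dimension $\dim P$, whereas the Bruhat cell $B\dot z^{-1}\,{}^z\!B=B\dot z^{-1}$ has dimension only $\dim B$. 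When $G$ is split, $\varphi^n$ acts trivially on $W$, so the $W_I$-twisted $\varphi^n$-conjugacy class of $e$ is just $\{e\}$. Your dichotomy would then force $G^e$ to equal the single cell $B\dot z^{-1}$, which is too small whenever $P\neq B$. In fact $G^e$ must contain additional Bruhat cells whose indices do not lie in the twisted conjugacy class of $e$; more generally, the intersection of a zip stratum with the Bruhat stratification is governed by a finer and more delicate combinatorics than $W_I$-conjugation. Concretely, $E_\Zcal/E_{\Zcal_B}\cong L/B_L$ is a positive-dimensional flag variety rather than a finite set, so ``which cells are absorbed'' cannot be read off from the discrete Weyl-group elements $\dot y$ alone: a generic $\ell\in L$ may push $\dot w\dot z^{-1}$ into a cell $B\dot w'\dot z^{-1}\,{}^z\!B$ with $w'$ not $W_I$-twisted conjugate to $w$. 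This is precisely why the third paragraph's conclusion, that $G^{w'}\subseteq\overline{G^w}$ if and only if some $y\in W_I$ gives $yw'\varphi^n(y)^{-1}\leq w$, does not follow from what precedes it.

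For comparison, the actual argument in \cite{Pink-Wedhorn-Ziegler-zip-data} does not run through a cell-by-cell Bruhat analysis. Instead, it proceeds by a reduction to the Levi: given a double coset indexed by $x\in {}^IW^J$, PWZ construct an auxiliary algebraic zip datum $\Zcal_x$ on $L$ together with a map $\iota_x$ inducing a homeomorphism between $\left[E_{\Zcal_x}\backslash L\right]$ and $\left[E_\Zcal\backslash P\dot z\dot x Q\right]$, and then establish the closure relation by induction on the size of the group, combining this Levi reduction with the usual closure relations for Bruhat cells of $L$. The precise correspondence between Bruhat cells of $G$ and $E_\Zcal$-orbits is a \emph{consequence} of that induction, not an input. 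Within the present paper, the closest analogue to the statement you are trying to establish is \Prop~\ref{relequiv}, which replaces $W_I$-twisted conjugation by full $L$-twisted conjugation: $x\preceq y$ if and only if $hx\varphi^n(h)^{-1}\leq y$ for some $h\in L(k)$. This version is correct, and the passage from the continuous $L$-statement to the discrete $W_I$-statement of \Th~\ref{thmPWZorder} is exactly the non-trivial combinatorial content that your proposal elides.
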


We show below how the flag space can be used to prove a similar result. Consider the flag space $\GF^{(\Zcal,B)}$, which we denote simply by $\GF^\Zcal$. Since we only consider the parabolic $P$, we shorten the term "$P$-minimal" to "minimal" and similarly for "cominimal". For elements $x,y\in G(k)$, we define two relations:
\begin{align}
x\leq y &\Longleftrightarrow x\in\overline{B y {}^z\!B} \label{Border}\\
x\preceq y &\Longleftrightarrow x \in \overline{\Ocal_\Zcal(y)} \label{curlyorder}
\end{align}
We say that $(g,l)\in G\times L$ is minimal (resp. cominimal) if its image by the isomorphism (\ref{isomGF}) in $\GF^\Zcal$ lies in a minimal (resp. cominimal) stratum. We say that $g\in G$ is minimal (resp. cominimal) if $(g,1)$ is minimal (resp. cominimal).

\begin{proposition} \label{relequiv}
Let $x,y\in G$, with $y$ minimal or cominimal. Then:
\begin{equation}\label{equivtwisted}
x\preceq y \ \Longleftrightarrow \ \exists h\in L, \ hx\varphi^n(h)^{-1}\leq y.
\end{equation}
\end{proposition}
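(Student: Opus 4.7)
The plan is to pass to the flag space $\GF^{\Zcal} = \GF^{(\Zcal,B)}$ and extract the equivalence \eqref{equivtwisted} from the properness of the projection $\pi_B \colon \GF^{\Zcal} \to \GZip^{\Zcal}$ applied to the closure of the fine flag stratum $Z_{B,w}$, where $y = \dot{w}\dot{z}^{-1}$. Both directions come out simultaneously, and the hypothesis that $w$ be minimal or cominimal enters at exactly one point.

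First I would make $Z_{B,w}$ concrete. Via \eqref{isomGF} with $P_{0,L} = B_L$, the stratum is represented by
\[
H_{B,w} \;=\; \bigl\{ (g, xB_L) \in G \times L/B_L \;:\; x^{-1} g\, \varphi^n(x) \in E_{\Zcal_0} \cdot y \bigr\},
\]
and for $P_0 = B$ the associated zip datum is $\Zcal_0 = (G, B, T, {}^z\!B, T, \varphi^n)$. A direct unwinding of the semidirect-product action $((u,v),t) \cdot g = t u g v^{-1} \varphi^n(t)^{-1}$ of $E_{\Zcal_0} = (U_B \times V_{{}^z\!B}) \rtimes T$ shows $E_{\Zcal_0} \cdot y = B y\, {}^z\!B$, so fine $=$ coarse flag strata are Bruhat cells in this parameterization. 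The map $\psi \colon G \times L \to G$, $(g,x) \mapsto x^{-1} g\, \varphi^n(x)$, factors as the isomorphism $(g,x) \mapsto (x^{-1}g\varphi^n(x), x)$ of $G \times L$ followed by the first projection, so it is smooth, hence flat; since $\overline{By\, {}^z\!B}$ is $E_{\Zcal_0}$-stable, the $B_L$-quotient is well-defined and one obtains
\[
\overline{H_{B,w}} \;=\; \bigl\{ (g, xB_L) \;:\; x^{-1} g\, \varphi^n(x) \in \overline{By\, {}^z\!B} \bigr\}.
\]

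Next I would invoke the two main structural inputs. By \Prop~\ref{propimagemin} applied to $(B, P)$, the hypothesis $w \in {}^I W \cup W^J$ yields $\pi_B(Z_{B,w}) = G^w$. By \Th~\ref{comdiagGF}\eqref{item-fibers}, $\pi_B$ is proper, so
\[
\pi_B\bigl(\overline{Z_{B,w}}\bigr) \;=\; \overline{\pi_B(Z_{B,w})} \;=\; \overline{G^w}.
\]
Reading off the underlying set via the description of $\overline{H_{B,w}}$ and substituting $h := x^{-1}$,
\[
\overline{G^w} \;=\; \bigl\{ g \in G \;:\; \exists\, h \in L,\; h g\, \varphi^n(h)^{-1} \in \overline{By\, {}^z\!B} \bigr\},
\]
which, unpacking \eqref{Border} and \eqref{curlyorder}, is exactly \eqref{equivtwisted} applied at $g = x$.

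The main technical pivot is the identification $E_{\Zcal_0} \cdot y = By\, {}^z\!B$ together with the flat descent of closures through $\psi$; these are what align the flag-stack strata with honest Bruhat cells and rely on $P_0 = B$. The minimality/cominimality hypothesis is used exclusively in the invocation of \Prop~\ref{propimagemin}: without it, $\pi_B(Z_{B,w})$ need not be the single $E$-orbit $G^w$, and the clean matching of the two sides of \eqref{equivtwisted} via closures would fail.
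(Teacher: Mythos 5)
Your argument is correct and coincides in essence with the paper's own proof: both identify $\pi_B(Z_{B,w})$ with $\Ocal_\Zcal(y)$ via \Prop~\ref{propimagemin}, then push closures through the proper projection $\pi_B$ (the paper phrases this as $\pi(\overline{H})=\overline{\Ocal(y)}$), so the two sides of \eqref{equivtwisted} match up. One point to tighten: the equality $E_{\Zcal_0}\cdot y = By\,{}^z\!B$ is not a purely formal unwinding of the semidirect product, since matching the torus component comes down to surjectivity of $t\mapsto t\cdot{}^{y}\varphi^n(t)^{-1}$ on $T$ (a Lang-type fact), or equivalently the dimension count \eqref{dimzipstrata}; this is exactly what the paper records as ``fine flag strata coincide with coarse flag strata when $P_0=B$'' in \S\ref{subsec def fine strata}, and it is cleaner to cite that statement than to claim it falls out by inspection.
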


\begin{proof}
We will assume that $y$ is minimal, since the proof is entirely similar in the cominimal case. Let $H\subset G\times (L/B_L) $ denote the minimal stratum containing the image of $(y,1)$. By \Prop~\ref{propimagemin}, the image $\pi(H)$ by the projection $\pi :G\times (L/B_L) \to G$ is $\Ocal(y)$. Now, the right-hand side of \eqref{equivtwisted} amounts to the existence of $h\in L$ such that $(x,\bar{h})\in\overline{H}$, where $\bar{h}$ is the image of $h$ by the map $L\to L/B_L$. But this means exactly that $x$ belongs to $\pi(\overline{H})=\overline{\Ocal(y)}$, so it is equivalent to $x \preceq y$.
\end{proof}

\begin{rmk}
For $w\in {}^I W$, the element $\dot{w}\dot{z}^{-1}$ is minimal. Applying \Prop~\ref{relequiv} to this element gives a similar result as \cite[\Th~6.2]{Pink-Wedhorn-Ziegler-zip-data} recalled above.
\end{rmk}

\section{Sections over fine flag strata}

\subsection{Existence of sections on zip strata}
We recall some results of \cite[\S3]{Goldring-Koskivirta-Strata-Hasse} on the existence of group-theoretical Hasse invariants. The formulas generalize readily to an arbitrary exponent $n\geq 1$.

\begin{definition}\label{charsecdef}
Let $S$ be locally closed in a scheme (or algebraic stack) $X$, with Zariski closure $\overline{S}$ (endowed with the reduced structure), and let $\Lscr$ be a line bundle on $X$. We say that a section $f\in H^0(\overline{S},\Lscr)$ is a characteristic section for $S$ if the non-vanishing locus of $f$ in $\overline{S}$ is exactly $S$.
\end{definition}

Fix a zip datum $\Zcal$ of exponent $n$. Assume that there exists a torus $T\subset L$ defined over $\FF_p$ (we may achieve this by replacing $\Zcal$ with ${}^g\Zcal$ for an appropriate $g\in G$, see \Prop~\ref{harmless}). Identify $X^*(E)=X^*(L)$ via the maps $E\to P\to L$ (where the first map is the first projection, and the second map is the natural projection). For $\lambda \in X^*(L)$, denote by $\Vscr(\lambda)$ the line bundle on $\left[E\backslash G \right]\simeq\GZip^\Zcal$ naturally attached to $\lambda$ by \cite[\S N.3.1]{Goldring-Koskivirta-Strata-Hasse}.

\begin{definition} \ 
\label{def-orb-p-close}
\begin{enumerate}
\item Let $\chi \in X^*(T)$. We say that $\chi$ is \underline{orbitally $q$-close} if $\left| \frac{\langle \chi, \sigma \alpha^{\vee} \rangle}{\langle \chi, \alpha^{\vee} \rangle} \right| \leq q-1$ for every $\alpha \in \Phi$ satisfying $\langle \chi , \alpha^{\vee} \rangle \neq 0$ and for all $\sigma \in   W \rtimes \Gal(k/\FF_p)$.
\item We say that $\chi$ is \underline{$q$-small} if $|\langle \chi, \alpha^{\vee} \rangle| \leq q-1$ for all $\alpha \in \Phi$.
\end{enumerate}
\end{definition}

\begin{definition} \label{defample}
A character $\lambda\in X^*(L)$ is $\Zcal$-\underline{ample} if the associated line bundle on $G/\varphi^{-n}(Q)$ is anti-ample (note that $L\subset \varphi^{-n}(Q)$ is a Levi subgroup).
 \label{def-ample-character} \end{definition}
\begin{rmk}\label{rmk-ample} Let $(B,T,z)$ be a $W$-frame and $(B',T',z')$ a dual $W$-frame. The following are equivalent:
\begin{enumerate}
\item $\chi$ is $\Zcal$-ample.
\item One has $\langle \chi,\varphi^{-n}(z)\alpha^\vee \rangle <0$ for all $\alpha\in \Delta \setminus \varphi^{-n}(J)$ with respect to $(B,T)$.
\item One has $\langle \chi,\alpha^\vee \rangle <0$ for all $\alpha\in \Delta \setminus \varphi^{-n}(J)$ with respect to $(B',T')$.
\end{enumerate} 
\end{rmk}

\begin{rmk}\label{rmk-ample-case} Let $\mu \in X_*(G)$ and $\Zcal=\Zcal_\mu$. Let $(B,T,z)$ be a $W$-frame, $B\subset P_0 \subset P$ a parabolic and $\Zcal_0$ defined by \eqref{zipparab}. The following are equivalent:
\begin{enumerate}
\item $\chi$ is $\Zcal_0$-ample.
\item 
\label{item-GS}
One has $\langle \chi,\alpha^\vee \rangle >0$ for all $\alpha\in I \setminus I_0$ and $\langle \chi,\alpha^\vee \rangle <0$ for all $\alpha\in \Phi^+ \setminus \Phi_L^+$. 
\end{enumerate}
\end{rmk}

\begin{rmk} 
\label{rmk-GS}
Condition~\ref{rmk-ample-case}\eqref{item-GS} is precisely the one that arises in the work of Griffiths-Schmid concerning the positivity of automorphic vector bundles on homogeneous complex manifolds, see the proof following $(4.21)_D$ on p. 276 of \cite{Griffiths-Schmid-homogeneous-complex-manifolds}. We will elaborate on the connection between \cite{Griffiths-Schmid-homogeneous-complex-manifolds} and sections on Zip strata in our forthcoming work with Brunebarbe and Stroh \cite{Brunebarbe-Goldring-Koskivirta-Stroh-ampleness}.

\end{rmk}

Fix $N\geq 1$ such that $\dim_k H^0(\left[ E\backslash S\right],\Vscr(N\chi))= 1$ for all $E$-orbits $S\subset G$ and for all $\chi \in X^*(L)$ (for the existence of such $N$, see \loccit \S3). Denote by $h_{S,\chi}$ a nonzero element of $H^0(\left[ E\backslash S\right],\Vscr(N\chi))$. Let $\overline{S}$ be the Zariski closure of $S$, endowed with the reduced structure.

\begin{theorem}[{\cite[\Th~3.2.3]{Goldring-Koskivirta-Strata-Hasse}}]\label{sectionszipstrata}
If $\chi\in X^*(L)$ is $\Zcal$-ample and orbitally $q$-close, then there exists $d\geq 1$ such that for all $E$-orbit $S\subset G$, the section $(h_{S,\chi})^d$ extends to $\left[E\backslash \overline{S}\right]$ with non-vanishing locus $\left[ E\backslash S\right]$.
\end{theorem}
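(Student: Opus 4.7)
My plan is to use the flag space $\GF^\Zcal=[\hat E_B\backslash G]$ as an auxiliary object, where strata are controlled directly by Bruhat geometry, and then to descend results to $\GZip^\Zcal=[E\backslash G]$ via the proper morphism $\pi_B$. Since the existence of the nonzero section $h_{S,\chi}$ on the open stratum $[E\backslash S]$ is already built into the setup preceding the theorem, the content is to show that $(h_{S,\chi})^d$ extends to the closure with the correct non-vanishing locus. By \Prop~\ref{propimagemin}, $\pi_B$ restricts to a finite surjection $Z_{B,w}\to[E\backslash G^w]$ for every $w\in{}^IW$; hence any characteristic section of $\pi_B^*\Vscr(Nd\chi)$ over the closure $\overline{Z_{B,w}}$ in $\GF^\Zcal$ descends (after a further power, or via the norm along $\pi_B$) to a characteristic section over $[E\backslash\overline{G^w}]$ for a uniform constant.

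To construct the characteristic section on the flag space, I would use a matrix coefficient of a rational $G$-representation. Choose a representation $V$ containing a vector $v^+$ of extremal weight $N\chi$ relative to $(B,T)$, together with a companion $v^-$ of weight $-N\chi$ relative to $({}^z\!B,T)$; the function $g\mapsto\langle v^-, g\cdot v^+\rangle$ is then $(B\times{}^z\!B)$-semi-invariant on $G$, with vanishing locus equal to the complement of the open Bruhat-type cell $B\dot{z}^{-1}{}^z\!B$ and first-order vanishing on each codimension-one boundary. The $\Zcal$-ampleness of $\chi$ (\Rmk~\ref{rmk-ample}) guarantees that such matrix coefficients cut out precisely the required boundaries, so translated products yield characteristic sections on every $\overline{B\dot{w}\dot{z}^{-1}{}^z\!B}$, and hence on each $\overline{Z_{B,w}}$. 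For $\hat E_B$-equivariance, one must absorb the Frobenius twist in the relation $(x,\varphi^n(x))\in E$ defining $\hat E_B$: the orbitally $q$-close condition on $\chi$ (\Def~\ref{def-orb-p-close}) is exactly the numerical bound ensuring that some uniform power $d$ of the section descends to $\hat E_B$-equivariance.

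The main obstacle is this uniform choice of $d$: the orbitally $q$-close bound controls the Weyl--Frobenius discrepancy on each individual root, but one must verify that a single $d$ works simultaneously across every stratum and every codimension-one boundary divisor. That the bound is tight is reflected in the counterexample of \S\ref{counterex}, where a character violating orbital $q$-closeness fails to admit a characteristic section. The remaining zip orbits not reached as images of minimal flag strata are handled symmetrically using a dual $W$-frame and the parametrization of cominimal flag strata by $W^J$.
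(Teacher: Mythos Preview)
This theorem is not proved in the present paper: it is quoted verbatim from \cite[\Th~3.2.3]{Goldring-Koskivirta-Strata-Hasse} and no argument is given here. The only trace of the method is the more precise \Prop~\ref{prop HI}, also cited from that paper, which says that the extension exists if and only if the explicit multiplicities $n_\alpha$ in \eqref{eqpropHI} are all positive. The actual proof in \cite{Goldring-Koskivirta-Strata-Hasse} computes these orders of vanishing directly along each codimension-one boundary component and then checks that the $\Zcal$-ample and orbitally $q$-close hypotheses force $n_\alpha>0$ for every $\alpha\in E_w$. Your approach via the flag space and descent is therefore genuinely different from what is done there.

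That said, your sketch has real gaps. The descent step does not work as stated: \Prop~\ref{propimagemin} gives that $\pi_B:Z_{B,w}\to[E\backslash G^w]$ is finite only on the \emph{open} stratum. The closure $\overline{Z_{B,w}}$ contains many non-minimal flag strata, over which $\pi_B$ has positive-dimensional fibers (the generic fibre of $\pi_B:\GF^\Zcal\to\GZip^\Zcal$ is $P/B$). So neither a norm nor a ``further power'' produces a section on $[E\backslash\overline{G^w}]$ from one on $\overline{Z_{B,w}}$. You would instead need to argue via $\pi_{B,*}$ and identify the pushforward, which is a different and more delicate computation. Also note a logical wrinkle: \Prop~\ref{propimagemin} itself is proved in this paper by appeal to \cite[\Prop~5.4.3]{Goldring-Koskivirta-Strata-Hasse}, so you should check there is no circularity with \cite[\Th~3.2.3]{Goldring-Koskivirta-Strata-Hasse} before invoking it.

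Finally, your last paragraph is confused. By the parametrization \eqref{param} every $E$-orbit is $G_w$ for some $w\in{}^IW$, and by \Prop~\ref{propimagemin} each of these is the image of the minimal flag stratum $Z_{B,w}$. There are no ``remaining zip orbits not reached as images of minimal flag strata''; the dual parametrization by $W^J$ is a second description of the \emph{same} set of orbits, not a disjoint complement.
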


One can state a more precise result. Let $\sigma:k\to k$ denote the inverse of the map $x\mapsto x^p$. For $w\in W$, define $w^{(0)}=e$ and by induction $w^{(r)}:={}^\sigma (w^{(r-1)}w)$ for all $r\geq 1$.

\begin{enumerate}[(i)]
\item Fix a $W$-frame $(B,T,z)$ (\Def~\ref{defWframe}). 
\item Fix $m\geq 1$ such that $T$ splits over $\FF_{q^{m}}$, where $q:=p^n$.
\item For $w\in {}^I W\cup W^J$, choose $r_w\geq 1$ such that $(w\varphi^n(z))^{(r_w)}=e$.
\item For $w\in W$, define $E_w$ as the set of roots $\alpha\in \Phi^+$ such that $ws_\alpha <w$ and $\ell(ws_\alpha) =\ell(w)-1$.
\item For $w\in {}^I W\cup W^J$, denote by $h_{w,\chi}$ the section $h_{S,\chi}$ for $S=G_w$.
\end{enumerate}

\begin{proposition}[{\cite[\Prop~3.2.1]{Goldring-Koskivirta-Strata-Hasse}}] \label{prop HI}
Let $w\in {}^I W\cup W^J$ and $\chi \in X^*(L)$. The following assertions are equivalent:
\begin{enumerate}
\item \label{item HIzip} There exists $d\geq 1$ such that $(h_{w,\chi})^d$ extends to $\left[ E\backslash \overline{G}_w\right]$ with non-vanishing locus $\left[ E\backslash G_w\right]$ (i.e to a characteristic section of $\left[ E\backslash \overline{G}_w\right]$).
\item\label{item HIformula}  For all $\alpha \in E_w$, one has:
\begin{equation}\label{eqpropHI}
n_\alpha:=\sum_{i=0}^{r_w m-1} \langle(zw^{-1})^{(i)}({}^{\sigma^i}\chi),w\alpha^\vee\rangle p^i >0.
\end{equation}
\end{enumerate}
\end{proposition}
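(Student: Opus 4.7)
\medskip

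\noindent\textbf{Proof proposal.} The plan is to show equivalence by a codimension-one analysis on the normal variety $\overline{G}_w$. A power of $h_{w,\chi}$ extends to $\bigl[E\backslash \overline{G}_w\bigr]$ as a characteristic section if and only if its $E$-invariant divisor, viewed as a rational section, is effective and supported exactly on $\overline{G}_w\setminus G_w$. Using normality of closures of $E$-orbits (a result of Pink--Wedhorn--Ziegler, also reflected in Prop.~\ref{coarseprop}), this extension/non-vanishing can be tested on the complement of a codimension-two subset of $\overline{G}_w$, i.e.\ on each codimension-one boundary stratum. By the closure relations (Thm.~\ref{thmPWZorder}), together with the dimension formula $\dim G_{w'} = \ell(w')+\dim P$, such codimension-one strata $G_{w'}\subset \overline{G}_w$ are parametrized exactly by $\alpha \in E_w$, with $w' = ws_\alpha$.

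Next, I fix $\alpha \in E_w$ and compute the order of vanishing $v_\alpha$ of $h_{w,\chi}$ along the corresponding codimension-one stratum. At a generic point of $G_{ws_\alpha}$, I would use the root subgroup $U_{w\alpha}$ to produce a local transversal curve $t\mapsto \dot{w}\dot{z}^{-1}\cdot u_{w\alpha}(t)$ landing in $G_w$ for $t\neq 0$, and write the generator $h_{w,\chi}$ explicitly using the semi-direct decomposition $E=(U\times V)\rtimes L$. The character $\chi$, pulled back along the $L$-part of the $E$-action through the first projection $E\to L$, contributes a factor involving $\langle\chi,w\alpha^\vee\rangle$; but because the $E$-action is $\varphi^n$-twisted, one must iterate the twisted conjugation until the orbit closes up, which happens after $r_w m$ steps by the choices of $r_w$ and $m$. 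Each iteration $i$ applies $\sigma^i$ to $\chi$ (since $T$ splits over $\mathbf F_{q^m}$), conjugates by $(zw^{-1})^{(i)}$, and multiplies the resulting exponent by $p^i$ (because $\varphi^n$ raises parameters to $p^n$-th powers).

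Summing the contributions from all iterations yields exactly
\[
v_\alpha \;=\; \sum_{i=0}^{r_w m-1}\bigl\langle (zw^{-1})^{(i)}({}^{\sigma^i}\chi),\, w\alpha^\vee\bigr\rangle\, p^i \;=\; n_\alpha.
\]
A power $(h_{w,\chi})^d$ thus extends to a section of $\Vscr(Nd\chi)$ on $\bigl[E\backslash \overline{G}_w\bigr]$ iff $d\cdot n_\alpha \ge 0$ for every $\alpha\in E_w$; it is a \emph{characteristic} section iff $d\cdot n_\alpha > 0$ for every $\alpha\in E_w$. Since $d\ge 1$ can always be chosen, this is equivalent to $n_\alpha >0$ for all $\alpha\in E_w$, yielding~\eqref{item HIformula}$\Leftrightarrow$\eqref{item HIzip}.

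The main obstacle is the explicit local computation in Step 2: constructing a local uniformizer at the generic point of $G_{ws_\alpha}\subset \overline{G}_w$ that is compatible with the $E$-action, and tracking the effect of iterated $\varphi^n$-twisted conjugation on the character $\chi$. The semi-direct decomposition $E=(U\times V)\rtimes L$ makes the $U\times V$ part act trivially on line bundles pulled back from $L$, so the whole computation reduces to the $L$-part, but the $\varphi^n$-twist forces the closure of the $L$-orbit to involve the $p$-adic sum above — this is where the hypothesis orbitally $q$-close ultimately enters in applications (Thm.~\ref{sectionszipstrata}), but is not needed for the equivalence itself.
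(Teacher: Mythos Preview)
The paper does not prove this proposition; it is cited verbatim from \cite[Prop.~3.2.1]{Goldring-Koskivirta-Strata-Hasse}. So your proposal must be judged on its own merits.

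Your overall architecture---normality of $\overline{G}_w$, reduction to codimension one, then an explicit vanishing-order computation---is sound, and the heuristic for why the iterated $\varphi^n$-twist produces the $p$-adic sum in $n_\alpha$ is correct in spirit. But there is a real gap at the step where you claim that the codimension-one $E$-orbits in $\overline{G}_w$ are parametrized exactly by $\alpha\in E_w$ via $w'=ws_\alpha$. This is false: the boundary of $\overline{G}_w$ is governed by the twisted order $\preceq$ of Theorem~\ref{thmPWZorder}, and its codimension-one pieces are indexed by $w'\in{}^IW$ with $\ell(w')=\ell(w)-1$ and $w'\preceq w$, whereas $E_w$ indexes \emph{Bruhat} lower neighbours $ws_\alpha$, which need not lie in ${}^IW$ at all. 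The paper's own counter-example in \S\ref{counterex} illustrates this: for $w=[351]$ one has $|E_w|=4$, with $ws_{\alpha_3}=[315]$ and $ws_{\alpha_4}=[321]$ lying outside ${}^IW$, while the Hasse diagram shows only two codimension-one $E$-orbits ($[153]$ and $[241]$) in $\overline{G}_w$. Yet it is precisely the condition $n_{\alpha_3}>0$ (for the ``phantom'' neighbour $[315]\notin{}^IW$) that fails at $p=2$. So the extra conditions in $E_w$ are not redundant, and they cannot be interpreted as vanishing orders along $E$-orbit boundary components of $\overline{G}_w$.

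The repair is to change the variety on which you do the codimension-one analysis. In \cite{Goldring-Koskivirta-Strata-Hasse} the computation takes place on (the closure of) the Bruhat cell $B\dot{w}\dot{z}^{-1}\,{}^z\!B$---equivalently on the coarse$=$fine flag stratum $\overline{Z_{B,w}}$ in $\GF^\Zcal$---where the codimension-one boundary \emph{is} genuinely parametrized by $E_w$ and your transversal-curve calculation goes through verbatim to give $n_\alpha$. One then needs the further, non-obvious input that for $w\in{}^IW\cup W^J$ the extension problem on $\overline{G}_w$ is equivalent to the one on this Schubert-type closure; this uses the finite surjection $Z_{B,w}\to[E\backslash G_w]$ from Proposition~\ref{propimagemin} together with the one-dimensionality of the relevant spaces of sections. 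Your sketch skips this bridge entirely.
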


\begin{corollary}\label{glnsections}
Let $\Zcal=(G,P,L,Q,M,\varphi^n)$ be an exponent $n$ zip datum such that $G_k\simeq GL_{n,k}$ and $P$ is a maximal parabolic. For a $\Zcal$-ample character $\lambda \in X^*(L)$, there exists $N\geq 1$ such that $\Vscr(N\lambda)$ admits characteristic sections on all zip strata.
\end{corollary}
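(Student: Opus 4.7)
The plan is to reduce the claim to a direct application of \Th~\ref{sectionszipstrata}. Since $\lambda$ is $\Zcal$-ample by hypothesis, the only thing left to verify is that $\lambda$ is orbitally $q$-close in the sense of \Def~\ref{def-orb-p-close}; the conclusion would then be immediate from \Th~\ref{sectionszipstrata}. The expected moral is that for $GL_n$ with a maximal parabolic, orbital $q$-closeness is forced by the extremely rigid shape of the pairings between characters of $L$ and coroots of $G_k$, so it imposes no diophantine hypothesis at all.

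The main step, which I would carry out first, is to compute the possible values of $\langle\lambda,\alpha^\vee\rangle$ as $\alpha^\vee$ ranges over coroots of $G_k$. Pick a maximal torus $T\subset L$; after conjugating $\Zcal$ by an element of $G(k)$ using \Rmk~\ref{remBTcond}(2) and \Prop~\ref{harmless}, we may assume $T$ is defined over $\FF_p$. By hypothesis $G_k\simeq GL_{n,k}$, and the assumption that $P$ is maximal means $L\simeq GL_{d,k}\times GL_{n-d,k}$ for some $0<d<n$. A character of $L_k$ factors through the product of determinants, so in the standard basis $(e_1,\dots,e_n)$ of $X_*(T_k)$ the character $\lambda$ is encoded by a tuple $(a_1,\dots,a_n)$ with $a_1=\cdots=a_d=a$ and $a_{d+1}=\cdots=a_n=b$. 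Every coroot of $G_k$ has the form $\alpha_{ij}^\vee=e_i-e_j$ for $i\neq j$, and one computes
\begin{equation*}
\langle\lambda,\alpha_{ij}^\vee\rangle=a_i-a_j\in\{-c,\,0,\,c\},\qquad c:=a-b,
\end{equation*}
depending on whether $i,j$ lie in the same block of size $d$ or $n-d$, or in opposite blocks.

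With this in hand, orbital $q$-closeness is automatic. For any $\sigma\in W\rtimes\Gal(k/\FF_p)$, the element $\sigma\alpha^\vee$ is again a coroot of $G_k$, hence the previous paragraph gives $\langle\lambda,\sigma\alpha^\vee\rangle\in\{-c,0,c\}$ as well. Therefore, for every $\alpha\in\Phi$ with $\langle\lambda,\alpha^\vee\rangle\neq 0$,
\begin{equation*}
\left|\frac{\langle\lambda,\sigma\alpha^\vee\rangle}{\langle\lambda,\alpha^\vee\rangle}\right|\in\{0,1\}\,\leq\,q-1,
\end{equation*}
which is precisely the orbital $q$-closeness condition. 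Applying \Th~\ref{sectionszipstrata} to $\chi=\lambda$ then furnishes an integer $N\geq 1$ such that $\Vscr(N\lambda)$ admits characteristic sections on every zip stratum, as required. There is essentially no serious obstacle in this argument; the whole corollary boils down to the single elementary observation that the pairing of a character of a maximal Levi in $GL_n$ with any coroot takes only three values, which trivialises the orbital condition irrespective of $p$ and $n$.
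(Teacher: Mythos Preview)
Your proof is correct and follows exactly the same approach as the paper: verify that $\lambda$ is orbitally $q$-close and invoke \Th~\ref{sectionszipstrata}. The paper's proof is the single sentence ``It is clear that $\lambda$ is orbitally $q$-close, so \Th~\ref{sectionszipstrata} applies''; you have simply unpacked the word ``clear'' with the explicit (and correct) observation that $\langle\lambda,\alpha^\vee\rangle\in\{-c,0,c\}$ for all coroots.
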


\begin{proof}
It is clear that $\lambda$ is orbitally $q$-close, so \Th~\ref{sectionszipstrata} applies.
\end{proof}

\subsection{Sections over fine flag strata}\label{subsec sections over fine}
We fix a parabolic subgroup $B\subset P_0 \subset P$ with type $I_0\subset I$. We make the identification $X^*(\hat{E}_{P_0})=X^*(L_0)$ as above. For a character $\lambda \in X^*(L_0)$, we denote by $\Lscr_{P_0}(\lambda)$ (or $\Lscr_{\Zcal,P_0}(\lambda)$ to avoid confusion) the attached line bundle on $\GF^{(\Zcal,P_0)}\simeq \left[\hat{E}_{P_0}\backslash G\right]$.

\begin{rmk}\label{picfin}
Since $\Pic(G)$ is finite, there exists an integer $m$ such that for all line bundle $\Lscr$ on $\GF^{(\Zcal,P_0)}$, $\Lscr^{m}=\Lscr_{P_0}(\lambda)$ for some $\lambda \in X^*(L)$.
\end{rmk}

\begin{definition}
A stratification of a topological space $X$ is a decomposition $X=\bigsqcup_i X_i$ into locally closed subsets such that the closure of each $X_i$ is a union of $X_j$.
\end{definition}

\begin{definition}[Types of purity] 
A stratification of a scheme or stack  $X$ is
\begin{enumerate}
\item  \underline{principally pure} if for every stratum $S$, there exists a line bundle $\Lscr$ on $X$ and a characteristic section $f\in H^0(\overline{S},\Lscr)$ for $S$;
\item \underline{uniformly principally pure} if $\Lscr$ may be chosen independently of $S$.
\end{enumerate}
\end{definition}

This definition applies to the stack $\GF^{(G,P_0)}$ endowed with the fine stratification. By \Rmk~\ref{picfin}, we may equivalently ask for the existence of a character $\lambda \in X^*(L_0)$ such that $\Lscr_{P_0}(\lambda)$ satisfies these conditions. Note that for $P_0=P$, this definition also applies to the stack $\GZip^\Zcal$, endowed with the zip stratification.

As we mentioned in the introduction, a key problem is to understand when the zip stratification of $\GZip^\Zcal$ is principally pure. If principal purity of the zip stratification would always hold (but it is not the case), that of the fine stratification of $\GF^{(\Zcal,P_0)}$ would follow by pulling back along the map $\Psi_{P_0}:\GF^{(\Zcal,P_0)}\to \GZip^{\Zcal_{0}}$. 

We will give in \S\ref{counterex} an example where the zip stratification of $\GZip^\Zcal$ is not principally pure. We will prove below that when $\Zcal$ is maximal (in particular, if $\Zcal$ is of Hodge-type), then the zip stratification of $\GZip^\Zcal$ is uniformly principally pure. However, note that the zip datum $\Zcal_{0}$ is usually not maximal even if $\Zcal$ is.

As a consequence of \Th~\ref{sectionszipstrata}, we obtain \Th~\ref{th-intro-unif-purity} of the introduction:

\begin{theorem}\label{thmonw}
Let $(\Zcal,P_0)$ be a flagged zip datum of exponent $n$ and write $q=p^n$. If there exists a $\Zcal_0$-ample, orbitally $q$-close character of $L_0$, then the fine stratification of $\GF^{(\Zcal,P_0)}$ is uniformly principally pure.
\end{theorem}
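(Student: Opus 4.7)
The plan is to reduce uniform principal purity of the fine stratification on $\GF^{(\Zcal,P_0)}$ to the case of the zip stratification on $\GZip^{\Zcal_0}$, which is already controlled by \Th~\ref{sectionszipstrata}. The map that makes this reduction possible is the smooth surjection
\begin{equation*}
\Psi_{P_0}:\GF^{(\Zcal,P_0)}\longrightarrow \GZip^{\Zcal_0}
\end{equation*}
of \eqref{Psimap}, whose fibers are affine spaces of dimension $\dim(P/P_0)$ by \Lem~\ref{affinebun}. By definition, the fine flag strata $Z_{P_0,w}$ ($w\in {}^{I_0}W$) are the reduced preimages under $\Psi_{P_0}$ of the zip strata of $\GZip^{\Zcal_0}$; since $\Psi_{P_0}$ is smooth, and in particular flat and open, the Zariski closure $\overline{Z_{P_0,w}}$ is the scheme-theoretic preimage of the closure of the corresponding zip stratum in $\GZip^{\Zcal_0}$.

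Let $\chi\in X^*(L_0)$ be a $\Zcal_0$-ample, orbitally $q$-close character, which exists by hypothesis. Applying \Th~\ref{sectionszipstrata} to the zip datum $\Zcal_0$ and the character $\chi$, one obtains an integer $N\geq 1$ (depending only on $\Zcal_0$ and $\chi$) such that for every $E_{\Zcal_0}$-orbit $S\subset G$, there is a section $f_S\in H^0\!\bigl([E_{\Zcal_0}\backslash \overline{S}],\Vscr_{\Zcal_0}(N\chi)\bigr)$ whose non-vanishing locus is exactly $[E_{\Zcal_0}\backslash S]$. The key compatibility to verify is that $\Psi_{P_0}^*\Vscr_{\Zcal_0}(N\chi)=\Lscr_{\Zcal,P_0}(N\chi)$: this follows from the fact that both line bundles are the line bundles on the relevant quotient stacks attached to the common character $\chi$ of $L_0$ pulled back to $E_{\Zcal_0}$ resp.\ $\hat{E}_{P_0}\subset E_{\Zcal_0}$ via the first projection.

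Pulling back each $f_S$ along $\Psi_{P_0}$ yields a section of $\Lscr_{\Zcal,P_0}(N\chi)$ on the closure $\overline{Z_{P_0,w}}$, whose non-vanishing locus is the preimage of that of $f_S$. Since $\Psi_{P_0}$ is faithfully flat, a pullback section vanishes exactly on the preimage of the zero locus of the original; hence this non-vanishing locus is precisely $Z_{P_0,w}$. Because $N\chi$ is the same character for every stratum, the single line bundle $\Lscr_{\Zcal,P_0}(N\chi)$ witnesses uniform principal purity of the fine stratification on $\GF^{(\Zcal,P_0)}$.

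The main obstacle I expect is not any deep geometric input — it is the bookkeeping to verify (i) that pullback of line bundles under $\Psi_{P_0}$ is compatible with the identification $X^*(\hat{E}_{P_0})=X^*(L_0)=X^*(E_{\Zcal_0})$, and (ii) that the reduced preimage structure matches the Zariski closure (so that sections on $\overline{Z_{P_0,w}}$ obtained by pullback are indeed sections on the correct scheme). Once these compatibilities are in place, the conclusion is an immediate transfer of \Th~\ref{sectionszipstrata} along the smooth surjective morphism $\Psi_{P_0}$.
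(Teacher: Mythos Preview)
Your proof is correct and follows exactly the same approach as the paper: apply \Th~\ref{sectionszipstrata} to $\GZip^{\Zcal_0}$ with the given $\Zcal_0$-ample, orbitally $q$-close character, then pull back along $\Psi_{P_0}$. The paper's proof is a single sentence to this effect; you have simply unpacked the routine compatibilities (closures vs.\ preimages under the smooth map, pullback of line bundles) that justify the pullback step.
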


\begin{proof}
Apply \Th~\ref{sectionszipstrata} to the stack $\GZip^{\Zcal_0}$ and pull back along $\Psi_{P_0}$.
\end{proof}

\begin{cor}
Let $G$ be a reductive group over $\FF_p$. There exists $N\geq 1$ (depending only on $G$) such that for all $n\geq N$ and all flagged zip datum $(\Zcal,P_0)$ for $G$ of exponent $n$, the fine stratfication of $\GF^{(\Zcal,P_0)}$ is uniformly principally pure.
\end{cor}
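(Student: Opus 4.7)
The plan is to invoke Theorem~\ref{thmonw}: it suffices, for each flagged zip datum $(\Zcal, P_0)$ for $G$ of exponent $n \geq N$, to produce a character of $L_0$ that is both $\Zcal_0$-ample and orbitally $q$-close, where $q=p^n$. Uniformity in $(\Zcal,P_0)$ will come from a finiteness argument, since both conditions depend only on a bounded amount of combinatorial data attached to $G$.

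First, I would reduce to finitely many ``shapes''. Fix a Borel pair $(B,T)$ of $G$ defined over $\FF_p$. Using Proposition~\ref{harmless} and Remark~\ref{remBTcond}(2), we may replace $\Zcal$ by a $G(k)$-conjugate so that $(B,T,z)$ is a $W$-frame for some $z \in W$. The data of $(\Zcal, P_0)$ is then encoded by the tuple $(I, J, z, I_0)$ with $I, J, I_0 \subset \Delta$, $I_0 \subset I$, $z \in W$, together with the action of $\varphi^n$ on $\Delta$. This Galois action has finite order $d$ depending only on $G$, so it factors through $n \bmod d$; consequently the possible shapes form a finite set $\Scal$ depending only on $G$.

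Second, for each shape $s \in \Scal$ I would construct a $\Zcal_0$-ample character $\chi_s \in X^*(L_0)$. The central compatibility is that, since $M_0 = \varphi^n(L_0)$ by definition of $\Zcal_0$, the type $J_0$ of $Q_0$ equals $\varphi^n(I_0)$, whence $\varphi^{-n}(J_0) = I_0$. Remark~\ref{rmk-ample} then shows that $\Zcal_0$-ampleness amounts (after the Weyl twist coming from a chosen $W$-frame of $\Zcal_0$) to strict sign inequalities on coroots indexed by $\Delta \setminus I_0$, while membership in $X^*(L_0)$ imposes vanishing on coroots indexed by $I_0$. These conditions are compatible, and non-emptiness of the resulting cone follows from the projectivity of the flag variety $G_k/\varphi^{-n}(Q_0)$, whose Picard group contains an ample class.

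Finally, orbital $q$-closeness is automatic for large $n$: for each fixed $\chi_s$, the ratios $\langle \chi_s, \sigma \alpha^\vee \rangle / \langle \chi_s, \alpha^\vee \rangle$ of Definition~\ref{def-orb-p-close} form a finite subset of $\QQ$, bounded by some $M_s$. Setting $M := \max_{s \in \Scal} M_s$ and choosing $N$ with $p^N - 1 \geq M$ ensures that each $\chi_s$ is orbitally $q$-close for every $n \geq N$, and Theorem~\ref{thmonw} then yields uniform principal purity. The main obstacle will be Step~2: a priori, the two requirements that $\chi$ lie in $X^*(L_0)$ and simultaneously be $\Zcal_0$-ample need not be consistent, and it is precisely the identity $J_0 = \varphi^n(I_0)$ built into the construction of $\Zcal_0$ that aligns them.
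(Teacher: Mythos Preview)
Your argument is correct and is precisely the reasoning the paper leaves implicit: the corollary is stated without proof, as an immediate consequence of Theorem~\ref{thmonw}, and you have spelled out the three ingredients (finitely many shapes up to conjugacy and modulo the period of Frobenius on $\Delta$; existence of a $\Zcal_0$-ample character for each shape via projectivity of $G/\varphi^{-n}(Q_0)$; automatic orbital $q$-closeness of any fixed character once $q$ is large). Your verification that $\varphi^{-n}(J_0)=I_0$, ensuring compatibility between the vanishing conditions defining $X^*(L_0)$ and the sign conditions for $\Zcal_0$-ampleness, is the only point requiring care, and you handle it correctly.
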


Another situation where \Th~\ref{thmonw} applies is when we start with a cocharacter datum of characteristic $0$. For large $p$, the reduction modulo $p$ will satisfy the conditions of the theorem. We will exploit this in \S\ref{sec flagsp}.

\subsection{Counter-examples}\label{counterex}

In this section, we give a counter-example to principal purity. It is rather easy to find a counter-example to uniform principal purity. For example, consider a cocharacter zip datum $(GL_{4,\FF_p},\mu)$ where
\begin{equation}
\mu:\GG_{m,k}\to GL_{4,k}, \quad z\mapsto \diag(1,z,z^2,z^3).
\end{equation}
Let $B\subset GL_{4,k}$ denote the Borel subgroup of upper-triangular matrices and $T\subset B$ the diagonal torus. The zip datum of exponent $1$ attached to $(GL_{4,\FF_p},\mu)$ is $\Zcal:=(GL_{4,\FF_p},B,T,B_-,T,\varphi)$, where $B_-$ is the opposite Borel. In this case, the zip strata of $GL_{4,k}$ coincide with the $B\times B_-$-orbits in $GL_{4,k}$. One can show that for $p=2$, there is no character $\lambda \in X^*(T)$ such that $\Vscr(\chi)$ admits characteristic sections on all zip strata, so the zip stratification is not uniformly principally pure. However, it is clearly principally pure since it coincides with the Bruhat stratification.

To find a counter-example to principal purity, consider the following case. Consider the alternating matrix

\begin{equation}\label{Jmat}
J:=\left(
\begin{matrix}
&&&&& 1\\
&&&&1&\\
&&&1&&\\
&&-1&&&\\
&-1&&&&\\
-1&&&&&
\end{matrix} \right).
\end{equation}
Denote by $G:=Sp(J)$ the symplectic group over $\FF_p$ attached to $J$ and let $\mu$ be the cocharacter of $G$ defined by
\begin{equation}\label{cocharcounter}
\mu: \GG_{m,k}\to G_k, \quad z\mapsto \diag(z,z,1,1,z^{-1}, z^{-1}).
\end{equation}

\begin{proposition}
For $p=2$, the zip stratification of $\GZip^\mu$ is not principally pure.
\end{proposition}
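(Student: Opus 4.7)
The plan is to apply the criterion of \Prop~\ref{prop HI} to a carefully chosen stratum and to combine it with \Rmk~\ref{picfin} to reduce the problem to an explicit feasibility question in one integer variable that, at $p=2$, has no solution.

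First, the zip datum must be made explicit. Let $T$ be the diagonal maximal torus of $G=Sp(J)$, so that $X^*(T)=\ZZ e_1\oplus \ZZ e_2\oplus \ZZ e_3$ with root system of type $C_3$. The Levi $L$ attached to $\mu$ is the centraliser of $\mu$; since the roots $\alpha$ with $\langle \mu,\alpha\rangle=0$ are precisely $\pm(e_1-e_2)$ and $\pm 2e_3$, we have $L\simeq GL_2\times Sp_2$. Fix a Borel $(B,T)$ over $\FF_p$ with $B\subset P$; then $\mu$ is $\FF_p$-rational, so $Q=P^-$, $M=L$, and by \Lem~\ref{gtf} the element $z=w_0 w_{0,I}$ completes a $W$-frame $(B,T,z)$. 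The kernel of $X^*(T)\to X^*(L)$ is spanned by the roots of $L$, so $X^*(L)$ is infinite cyclic, generated by $\lambda_0:=e_1+e_2$.

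By \Rmk~\ref{picfin}, every line bundle on $\GZip^\mu$ admits a positive tensor power isomorphic to $\Vscr(N\lambda_0)$ for some $N\in\ZZ$. The zip stratification will therefore fail to be principally pure as soon as we exhibit a $w\in {}^IW$ such that no $N\in\ZZ$ satisfies
\[
n_\alpha(N) \,:=\, N\sum_{i=0}^{r_w-1} \big\langle (zw^{-1})^{i}\lambda_0,\; w\alpha^\vee \big\rangle\cdot 2^i \,>\,0 \quad\text{for every }\alpha\in E_w,
\]
using that $T$ is $\FF_p$-split (so the Galois action is trivial and the iteration $w^{(i)}$ of \Prop~\ref{prop HI} collapses to $w^i$). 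Since the $n_\alpha$ are linear in $N$, and since $N=0$ is ruled out (the trivial bundle has no characteristic section on a non-closed stratum), the requirement reduces to finding $w$ such that the integers
\[
\sigma_\alpha \,:=\, \sum_{i=0}^{r_w-1}\big\langle (zw^{-1})^{i}\lambda_0,\; w\alpha^\vee\big\rangle\cdot 2^i,\qquad \alpha\in E_w,
\]
do not all share the same strict sign.

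The decisive step is then a combinatorial search. The set ${}^IW$ has cardinality $|W(C_3)|/|W_I|=48/4=12$; for each representative $w$, one computes the descent set $E_w$, the order $r_w$ of $wz$, the $W$-orbit of $\lambda_0$ under the iterates of $zw^{-1}$, and the integers $\sigma_\alpha$. The hard part will be isolating the specific element $w$ that produces $\alpha,\beta\in E_w$ with $\sigma_\alpha>0>\sigma_\beta$. Such a sign conflict is characteristic of $p=2$: for larger primes, the leading term $2^{r_w-1}\langle (zw^{-1})^{r_w-1}\lambda_0, w\alpha^\vee\rangle$ eventually dominates and forces the overall sign, whereas at $p=2$ the lower-order contributions are of comparable magnitude and can reverse it. Once such a $w$ is exhibited, \Prop~\ref{prop HI} forbids a characteristic section on $\overline{G_w}$ for any $\Vscr(N\lambda_0)$, and hence for any line bundle, proving that the zip stratification of $\GZip^\mu$ fails to be principally pure at $p=2$.
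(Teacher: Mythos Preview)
Your plan is essentially the paper's own approach: apply the criterion of \Prop~\ref{prop HI} to a well-chosen stratum, after observing that $X^*(L)\cong\ZZ$ so that (via \Rmk~\ref{picfin}) every line bundle on $\GZip^\mu$ has a power of the form $\Vscr(N\lambda_0)$ and the problem reduces to a single integer parameter. You are in fact more explicit than the paper about this rank-one reduction, and your identification of the generator $\lambda_0=e_1+e_2$ is correct.

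What you leave undone is precisely what the paper carries out: the relevant stratum is $w=[351]$ in the paper's permutation notation, with $|E_w|=4$, and the four integers $n_\alpha$ work out to $p-1$, $2p-1$, $p-2$, $p-1$. At $p=2$ one of them is $n_{\alpha_3}=0$, a \emph{vanishing} rather than the strict sign conflict $\sigma_\alpha>0>\sigma_\beta$ that you anticipate. Your earlier phrasing (``do not all share the same strict sign'') still covers this case, but your sharpened expectation would have you searching for the wrong phenomenon. Likewise, your heuristic that the leading $p$-adic term dominates for large $p$ and that cancellation occurs at $p=2$ is not what is going on here: the obstruction is the exact linear vanishing $p-2=0$, not a competition between terms of comparable magnitude.
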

First we let $p\geq 2$ be an arbitrary prime number. Let $T$ be the $\FF_p$-split maximal torus given by the diagonal matrices in $G$:
\begin{equation}
T(R):=\{ \diag(x_1,x_2,x_3,x^{-1}_3,x^{-1}_2,x^{-1}_1), \ \ x_1,x_2,x_3\in R^\times \}
\end{equation}
for any $\FF_p$-algebra $R$. A Borel subgroup $B$ over $\FF_p$ containing $T$ is given by the lower-triangular matrices in $G$. For a triple $(a,b,c)\in \ZZ^3$, let $\lambda_{(a,b,c)}\in X^*(T)$ sending $\diag(x_1,x_2,x_3,x^{-1}_3,x^{-1}_2,x^{-1}_1)$ to $x^a_1 x^b_2 x^c_3$. Identify $X^*(T)$ with $\ZZ^3$ via this isomorphism. Denoting by $(e_1,e_2,e_3)$ the standard basis of $\ZZ^3$, the $T$-roots of $G$, the positive roots, the simple roots are respectively:
\begin{align*}
\Phi&:=\{e_i \pm e_j , 1\leq i \neq j \leq 3\} \cup \{ \pm 2e_i, 1\leq i \leq 3 \}\\
\Phi^+&:=\{e_i \pm e_j , 1\leq i< j \leq 3\} \cup \{ 2e_i, 1\leq i \leq 3 \}\\
\Delta&:=\{\alpha_1, \alpha_2,\alpha_3\}
\end{align*}
where $\alpha_1, \alpha_2,\alpha_3$ are defined by
\begin{equation}
\alpha_1:=e_1-e_2 \quad; \quad  \alpha_2:=e_2-e_3  \quad; \quad  \alpha_3:=2e_3.
\end{equation}
The Weyl group $W:=N_G(T)/T$ is the group of permutations $\sigma \in \Sfr_6$ satisfying $\sigma(i)+\sigma(7-i)=7$ for all $1\leq i \leq 6$. Following \cite{EkedahlGeerEO}, we denote a permutation $\sigma \in W$ by $[\sigma(1) \sigma(2) \sigma(3)]$ (these three values determine $\sigma\in W$ uniquely). The identity element $e\in W$ is $[123]$ and the simple reflections are
\begin{equation}
s_{\alpha_1}=[213] \quad; \quad s_{\alpha_2}=[132] \quad; \quad s_{\alpha_3}=[124].
\end{equation}
The longest element in $W$ is $w_0=[654]$ and has length $9$.

Define a maximal parabolic $P$ containing $B$ such that for any $\FF_p$-algebra $R$, the group $P(R)$ is the set of elements of $G(R)$ of the form:
\begin{equation}
\left(
\begin{matrix}
A & 0 & 0\\
*&B & 0\\
*&*& C
\end{matrix}
\right), \quad A,B,C \in GL_2(R).
\end{equation}
One sees immediately that such a matrix satisfies the conditions $B\in SL_2(R)$ and $C= J_0 {}^tA^{-1} J_0$ with $J_0:=\left( \begin{matrix}
0& 1\\1&0 \end{matrix}\right)$. In particular, the Levi subgroup $L$ of $P$ containing $T$ is isomorphic to $GL_{2,\FF_p}\times SL_{2,\FF_p}$ and $W_L$ identifies with the subgroup $W_L=\langle s_{\alpha_1},s_{\alpha_3} \rangle$. The longest element of $W_{L}$ is $w_{0,L}:=[214]$ (length $2$). We also define:
\begin{equation}
z:= w_0 w_{0,L}=[563].
\end{equation}
Let $Q$ be the opposite parabolic subgroup of $P$ with respect to $L$. We obtain a zip datum $\Zcal:=(G,P,L,Q,L,\varphi)$.

Here is a representation of the zip strata. For $w,w'\in W$, an arrow $w\to w'$ indicates the closure relation $G_w \subset \overline{G}_{w'}$. In this example, the order $\preceq$ coincides with the restriction of the Bruhat order $\leq$ to ${}^I W$.

\begin{center}
\begin{tikzpicture}
\node(pseudo) at (-1,0){};
\node(0) at (0,0)[]        {$[123]$};
\node(1) at (1.5,0)[]        {$[132]$};
\node(2) at (3,0.5)[]        {$[142]$};
\node(3) at (3,-0.5)[]       {$[231]$};
\node(4) at (4.5,0.5)[]        {$[153]$};
\node(5) at (4.5,-0.5)[]       {$[241]$};
\node(6) at (6,0.5)[]        {$[263]$};
\node(7) at (6,-0.5)[]       {$[351]$};
\node(8) at (7.5,0.5)[]        {$[362]$};
\node(9) at (7.5,-0.5)[]       {$[451]$};
\node(10) at (9,0)[]        {$[462]$};
\node(11) at (10.5,0)[]       {$[563]$};
 \draw[->]
 (0)      edge                 node [above]  {}     (1)
  (1)      edge                 node [above]  {}     (2)
  (1)      edge                 node [above]  {}     (3)
  (3)      edge                 node [above]  {}     (5)
  (2)      edge                 node [above]  {}     (5)
  (2)      edge                 node [above]  {}     (4)
  (4)      edge                 node [above]  {}     (7)
  (4)      edge                 node [above]  {}     (6)
  (5)      edge                 node [above]  {}     (6)
  (5)      edge                 node [above]  {}     (7)
  (6)      edge                 node [above]  {}     (8)
  (7)      edge                 node [above]  {}     (8)
  (7)      edge                 node [above]  {}     (9)
  (8)      edge                 node [above]  {}     (10)
  (9)      edge                 node [above]  {}     (10)
  (10)      edge                 node [above]  {}     (11)  ;
\end{tikzpicture}
\end{center}

In the following, we take $\chi:=\lambda_{(1,0,0)}$, which is the fundamental weight of $Q$. Consider the element
\begin{equation}
w:=[351] \in {}^I W.
\end{equation}
One has $\ell(w)=4$. The set $E_w$ of "lower neighbors" of $w$ in $W$ consists of $4$ elements:
\begin{align*}
&w_1:=[153]=ws_{\alpha_1} \quad \textrm{ with } \ \alpha_1=e_1-e_3 \\
&w_2:=[241]=ws_{\alpha_2} \quad \textrm{ with } \ \alpha_2=e_1+e_2 \\
&w_3:=[315]=ws_{\alpha_3} \quad \textrm{ with } \ \alpha_3=e_2-e_3 \\
&w_4:=[321]=ws_{\alpha_4} \quad \textrm{ with } \ \alpha_4=2e_2
\end{align*}
An easy computation shows that the multiplicities $n_\alpha$ as in \eqref{eqpropHI} are given by:
\begin{align*}
&n_{\alpha_1}=p-1\\
&n_{\alpha_2}=2p-1\\
&n_{\alpha_3}=p-2\\
&n_{\alpha_2}=p-1
\end{align*}
Hence for $p=2$, one has $n_{\alpha_3}=0$, so $g_w$ is non-vanishing along $X_{w_3}$. This shows that no power of $h_{w,\chi}$ extends to a characteristic function of $G_w$. Actually, one can show that no power of $h_{w,\chi}$ extends at all to $\overline{G}_w$.

\section{Functoriality and discrete fiber theorem}

\subsection{Discrete fiber theorem}

We consider a morphism $f:\Zcal_1 \to \Zcal_2$ of zip data of exponent $n$, with $\Zcal_i=(G_i,P_i,L_i,Q_i,M_i,\varphi^n)$, and let $I_i$ be the type of $P_i$ (for $i=1,2$). We say that $f$ has central kernel if the underlying group homomorphism $f:G_1\to G_2$ has central (scheme-theoretic) kernel.

\begin{theorem} \label{discfib}
Let $f:\Zcal_1\to \Zcal_2$ be a morphism with central kernel. Then the attached morphism of stacks
\begin{equation}
\tilde{f}:\GoneZip^{\Zcal_1}\to \GtwoZip^{\Zcal_2}
\end{equation}
has discrete fibers on the underlying topological spaces.
\end{theorem}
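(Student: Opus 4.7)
The plan is to argue by contradiction, using the uniform principal purity statement of \Th~\ref{thmonw} as the main input. Suppose two distinct $\Zcal_1$-strata $S\subsetneq \overline{S'}$ in $\GoneZip^{\Zcal_1}$ map via $\tilde f$ to the same $\Zcal_2$-stratum $T$ of $\GtwoZip^{\Zcal_2}$, and aim for a contradiction.

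First I would normalize the setup. Using \Prop~\ref{harmless} and \Rmk~\ref{remBTcond}, replace $\Zcal_1$ and $\Zcal_2$ by conjugate zip data so that compatible $W$-frames $(B_i,T_i,z_i)$ exist, with $f(T_1)\subset T_2$ and $f(B_1)\subset B_2$. Because $\ker f$ is central and hence contained in every maximal torus of $G_1$, one checks $f^{-1}(T_2)=T_1$, so the induced Weyl-group homomorphism $W_1\hookrightarrow W_2$ is injective; moreover $f$ carries the type $I_1$ of $P_1$ into the type $I_2$ of $P_2$. Via the parametrization \eqref{param}, the topological map induced by $\tilde f$ becomes an explicit combinatorial map $^{I_1}W_1\to {}^{I_2}W_2$, and the discrete-fiber statement reduces to showing that two elements of $^{I_1}W_1$ comparable in the closure order $\preceq$ which have the same image in $^{I_2}W_2$ must coincide.

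The second and key step is the passage to large exponent, flagged in the introduction. Replace $n$ by a sufficiently large multiple $Nn$: the bound $q-1$ appearing in the orbitally $q$-close condition grows with $q=p^{Nn}$, so any fixed $\Zcal_2$-ample character $\chi_2\in X^*(L_2)$ becomes orbitally $q$-close once $N$ is large enough. \Th~\ref{thmonw} then supplies, for every $\Zcal_2$-stratum $T$, a characteristic section $h_T\in H^0(\overline T,\Vscr(\chi_2))$ cutting out $T$. Setting $\chi_1=f^*\chi_2\in X^*(L_1)$, the central-kernel hypothesis yields the coroot-level identity $\langle \chi_1,\alpha^\vee\rangle=\langle\chi_2,f(\alpha^\vee)\rangle$ for every coroot $\alpha^\vee$ of $G_1$, which is moreover compatible with $\varphi^n$ and with the embedding $W_1\hookrightarrow W_2$. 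Consequently the integers $n_\alpha$ of \Prop~\ref{prop HI} computed with $\chi_1$ on a $\Zcal_1$-stratum $G_w^1$ coincide with those computed with $\chi_2$ on $\tilde f(G_w^1)$. It follows that $\chi_1$ also satisfies the hypotheses of \Th~\ref{thmonw} and yields characteristic sections $h_w$ of $\Vscr(N'\chi_1)$ on each $\overline{G_w^1}$; moreover, tracing through the explicit construction of these Hasse invariants (via the product formula underlying \Prop~\ref{prop HI}) shows that $\tilde f^*h_T$ agrees with an appropriate power of $h_{w'}$, up to a nonzero scalar, as sections over $\overline{S'}$.

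The contradiction is then immediate: $h_{w'}$ vanishes identically on $\overline{S'}\setminus S'$, hence on $S$, whereas $\tilde f^*h_T$ has non-vanishing locus $\tilde f^{-1}(T)\cap\overline{S'}\supset S\cup S'$ and thus is non-zero on $S$. The identification of these two sections up to a scalar forces $S=S'$, proving the theorem. The main obstacle, I expect, lies in this last identification: one must verify that $\tilde f^*h_T$ genuinely restricts to a scalar multiple of a power of $h_{w'}$ on $\overline{S'}$, not merely to a section with the same vanishing locus. The one-dimensionality $\dim_k H^0([E\backslash S],\Vscr(N\chi))=1$ quoted before \Th~\ref{sectionszipstrata}, combined with the coroot identity from central-kernel-ness, is what lets this comparison go through, but the bookkeeping connecting the two sides of Pink--Wedhorn--Ziegler parametrization requires care.
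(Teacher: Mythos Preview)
Your overall strategy matches the paper's: pass to large exponent so that Hasse invariants exist, then derive a contradiction by comparing a pulled-back non-vanishing section with a characteristic section that must vanish on the smaller stratum. However, there is a genuine gap in the step ``replace $n$ by a sufficiently large multiple $Nn$.''

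When you change the exponent to $Nn$ you obtain new zip data $\Zcal_{i,N}=(G_i,P_i,L_i,Q_i,M_i,\varphi^{Nn})$. Although the same finite set of representatives parametrizes the strata for every $N$ (in a suitable congruence class), both the closure order $\preceq_N$ and the induced map $\Psi_N$ on strata genuinely depend on $N$: the former because the twisted order involves $\varphi^{n}$-conjugation by $W_I$ (\Th~\ref{thmPWZorder}), the latter because the zip group $E_{\Zcal_{2,N}}$ is defined via $\varphi^{Nn}$. Proving discrete fibers for $\tilde f_N$ with $N$ large therefore does not automatically yield discrete fibers for $\tilde f=\tilde f_1$. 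The paper supplies the missing transfer in two pieces. First, if $x\preceq_1 y$, then \Prop~\ref{relequiv} produces $h\in L_1$ with $hx\varphi^n(h)^{-1}\le y$ in the Bruhat order; taking $N\equiv 1$ modulo an integer $C$ with $h\in G_1(\FF_{p^C})$ gives $\varphi^{Nn}(h)=\varphi^n(h)$, hence $x\preceq_N y$. Second (\Lem~\ref{lem1N}), if $\Psi_1(x)=\Psi_1(y)$, a similar finite-field argument applied to a witness $a\in L_2$ shows $\Psi_N(x)=\Psi_N(y)$ for arbitrarily large $N$ in a further congruence class. These two steps are the substance of the exponent change, and your proposal omits them.

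A secondary point: your plan to match the integers $n_\alpha$ of \Prop~\ref{prop HI} across $f$ is both harder than needed and not clearly correct, since the element of ${}^{I_2}W_2$ labeling $\tilde f(G_w^1)$ is not in general the image of $w$ under $W_1\hookrightarrow W_2$, and the sets of lower neighbors differ. The paper bypasses this: once at large exponent it uses \Lem~\ref{restrictionample} and \Th~\ref{sectionszipstrata} to get a characteristic section $H_1$ on $\overline{C_1}$, takes any non-vanishing section $H_2$ on the single open stratum $C_2$ (no need for a characteristic section on $\overline{C_2}$), and identifies $\tilde f^*H_2^N$ with a scalar multiple of $H_1^r|_{C_1}$ using only $\dim_k H^0(C_1,\Vscr(Nr\chi_1))=1$. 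Density of $C_1$ in $\overline{C_1}$ extends this equality to $\tilde f^{-1}(C_2)\cap\overline{C_1}\supset C'_1$, where $H_1$ vanishes but $\tilde f^*H_2$ does not. You do invoke one-dimensionality at the end, but the comparison should take place on the open stratum $S'$, not on $\overline{S'}$.
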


\begin{proof}
We first reduce the proof to the case when $f$ is a closed embedding. Set $K:=\Ker(f)$ and define $\tilde{\square}=\square/K$ for $\square=G_1,P_1,L_1,Q_1,M_1$. We get a zip datum $\tilde{\Zcal}_1:=(\tilde{G}_1,\tilde{P}_1,\tilde{L}_1,\tilde{Q}_1,\tilde{M}_1,\varphi^n)$. The natural projection $\pi: G_1 \to \tilde{G}_1$ induces a map of zip data $\Zcal_1 \to \tilde{\Zcal}_1$ and the induced map
\begin{equation}
\tilde{\pi}:\GoneZip^{\Zcal_1}\to \GtiloneZip^{\tilde{\Zcal}_1}
\end{equation}
is a bijection on the underlying topological spaces, by the proof of \cite[\Prop~3.20]{Koskivirta-Wedhorn-Hasse}. We may factor the map $f:G_1\to G_2$ as the composition $G_1\xrightarrow{\pi} \tilde{G}_1 \xrightarrow{\iota} G_2$ where $\iota$ is a closed embedding, so it suffices to prove the result for the map $\iota$. Hence we assume from now on that $f$ is a closed embedding.

\begin{lemma} \label{restrictionample}
Let $\chi_2 \in X^*(L_2)$ be a $\Zcal_2$-ample character. Then the restriction $\chi_1=\chi_2|_{L_1}\in X^*(L_1)$ is $\Zcal_1$-ample.
\end{lemma}

\begin{proof}[Proof of Lemma \ref{restrictionample}]
We first show that $f^{-1}(Q_2)=Q_1$. By definition of a map of zip data, one has $f(Q_1)\subset Q_2$, so $Q_1\subset f^{-1}(Q_2)$. In particular, $f^{-1}(Q_2)$ is a parabolic subgroup of $G_1$. Then $f^{-1}(V_2)$ is a unipotent normal subgroup of $f^{-1}(Q_2)$, hence its identity component is contained in the unipotent radical of $f^{-1}(Q_2)$. Again by definition we have $V_1\subset f^{-1}(V_2)$, so we deduce that $V_1$ is contained in the unipotent radical of $f^{-1}(Q_2)$. This implies $Q_1=f^{-1}(Q_2)$. Since $f$ is defined over $\FF_p$, we obtain an embedding $G_1/\varphi^{-n}(Q_1) \to G_2/\varphi^{-n}(Q_2)$. This proves the result.
\end{proof}

We first prove \Th~\ref{discfib} for large $n$. More precisely, one has the following lemma:

\begin{lemma}\label{lemmadisc}
Assume that there exists a $\Zcal_2$-ample character $\chi_2\in X^*(L_2)$ such that $\chi_1:=\chi_2|_{L_1}$ is $p^n$-small. Then $\tilde{f}$ has discrete fibers.
\end{lemma}

\begin{proof}[Proof of Lemma \ref{lemmadisc}]
By \Lem~\ref{restrictionample} above, $\chi_1$ is $\Zcal_1$-ample, so we may apply \Th~\ref{sectionszipstrata} to $\chi_1$. If $\tilde{f}$ doesn't have discrete fibers, then we can find two zip strata $C_1\subset G_1$ and $C'_1\subset \overline{C_1}$ mapping to the same zip stratum $C_2$ of $G_2$. By \Th~\ref{sectionszipstrata}, there exists $N\geq 1$ such that $\Vscr(N\chi_1)$ admits a section $H_1$ on $\overline{C_1}$  with non-vanishing locus $C_1$. We can also find $r\geq 1$ and a non-vanishing section $H_2$ of $\Vscr(r\chi_2)$ on $C_2$. The pull-back of $H_2^N$ to $C_1$ agrees (up to a nonzero scalar) with $H_1^r$, since they are both nonzero sections of $\Vscr(Nr\chi_1)$, and the space $H^0(C_1,\Vscr(Nr\chi_1)$ has dimension one by \cite[\Prop 1.18]{Koskivirta-Wedhorn-Hasse}. This contradicts the fact that $H_1$ vanishes on $C_1'$.
\end{proof}

We now prove the general case. Fix an integer $A$ such that all the groups $P_i,L_i,Q_i,M_i$ for $i=1,2$ are defined over $\FF_{p^A}$. For an integer $N\equiv 1 \pmod A$, we consider new zip data
\begin{align*}
\Zcal_{1,N}&=(G_1,P_1,L_1,Q_1,M_1,\varphi^{nN}) \\
\Zcal_{2,N}&=(G_2,P_2,L_2,Q_2,M_2,\varphi^{nN}).
\end{align*}
By our assumption on $N$ and $A$, the tuples $\Zcal_{1,N}$ and $\Zcal_{2,N}$ are zip data of exponent $Nn$. The map $f$ induces a morphism of zip data $f_N:\Zcal_{1,N} \to \Zcal_{2,N}$ and a morphism of stacks
\begin{equation}
\tilde{f}_N:\GoneZip^{\Zcal_{1,N}}\to \GtwoZip^{\Zcal_{2,N}}.
\end{equation}
By the parametrization \eqref{param}, there exists a finite subset $\Sigma_1 \subset G_1(k)$ (resp. $\Sigma_2\subset G_2(k)$), independent of $N$, which is a system of representatives of the $\Zcal_{1,N}$-strata in $G_1$ (resp. the $\Zcal_{2,N}$-strata in $G_2$) for all $N$. Hence $f_N$ induces a map of sets
\begin{equation}
\Psi_N:\Sigma_1 \to \Sigma_2
\end{equation}

We believe that $\Psi_N$ is independent of $N$, but we only need the following weaker result:

\begin{lemma}\label{lem1N}
Fix an integer $C\geq 1$. Assume that $x,y\in \Sigma_1$ satisfy $\Psi_1(x)=\Psi_1(y)$. Then there exists arbitrary large $N\equiv 1 \pmod{C}$ such that $\Psi_N(x)=\Psi_N(y)$.
\end{lemma}

\begin{proof}[Proof of Lemma \ref{lem1N}]
Since $\Psi_1(x)=\Psi_1(y)$, the elements $f(x)$ and $f(y)$ are in the same $\Zcal_2$-zip stratum. Hence we can find $u\in U_2$, $v\in V_2$ and $a\in L_2$ such that
\begin{equation}
f(y)=uaf(x)\varphi^n(a)^{-1}v
\end{equation}
Choose $B\geq 1$ such that $a\in G(\FF_{p^B})$. Then for $N\equiv 1 \pmod{ABC}$, we can write $f(y)=uaf(x)\varphi^{nN}(a)^{-1}v$, which shows that $\Psi_N(x)=\Psi_N(y)$.
\end{proof}

To finish the proof of \Th~\ref{discfib}, denote by $\preceq_N$ the relation (\ref{curlyorder}) for the zip datum $\Zcal_{1,N}$. Assume that $\tilde{f}=\tilde{f}_1$ doesn't have discrete fibers.  Then there exists $x,y\in \Sigma_1$ such that $x\preceq_1 y$ and $\Psi_1(x)=\Psi_1(y)$. By \Prop~\ref{relequiv}, there exists $h\in L_1$ such that $hx\varphi^{n}(x)^{-1}\leq y$. Choose $C\geq 1$ such that $h\in G(\FF_{p^C})$. Then
\begin{equation}
hx\varphi^{nN}(x)^{-1}= hx\varphi^{n}(x)^{-1} \leq y
\end{equation}
for all $N\equiv 1 \pmod{C}$, which shows that $x\preceq_N y$.
By \Lem~\ref{lem1N}, there exists arbitrary large $N\equiv 1 \pmod{C}$ such that $\Psi_N(x)=\Psi_N(y)$, hence the map $\tilde{f_N}$ does not have discrete fibers for each such $N$. But for large $N$, we can find a $\Zcal_2$-ample character of $L_2$ restricting to a $p^{Nn}$-small character of $L_1$. By \Lem~\ref{lemmadisc} the map $\tilde{f}_N$ has discrete fibers, hence a contradiction.
\end{proof}

\begin{corollary}\label{cor-f-pure}
Let $f:\Zcal_1\to \Zcal_2$ be a morphism with central kernel. If the zip stratification of $\Zcal_2$ is principally pure (resp. uniformly principally pure), then so is the zip stratification of $\Zcal_1$.
\end{corollary}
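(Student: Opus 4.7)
The plan is to pull back a characteristic section from the target along $\tilde{f}$ and use Theorem~\ref{discfib} to identify its non-vanishing locus. Let $S_1 \subset \GoneZip^{\Zcal_1}$ be a zip stratum, and let $S_2 \subset \GtwoZip^{\Zcal_2}$ be the zip stratum containing $\tilde{f}(S_1)$. Since $\tilde{f}$ is continuous, $\tilde{f}(\overline{S_1}) \subset \overline{\tilde{f}(S_1)} \subset \overline{S_2}$, so $\overline{S_1} \subset \tilde{f}^{-1}(\overline{S_2})$. Invoking principal purity of the zip stratification of $\Zcal_2$, pick a line bundle $\Lscr_2$ on $\GtwoZip^{\Zcal_2}$ and a characteristic section $h_2 \in H^0(\overline{S_2}, \Lscr_2)$ for $S_2$. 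Set $\Lscr_1 := \tilde{f}^*\Lscr_2$ and $h_1 := \bigl(\tilde{f}^* h_2\bigr)|_{\overline{S_1}} \in H^0(\overline{S_1}, \Lscr_1)$.

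The key step is to identify the non-vanishing locus of $h_1$ on $\overline{S_1}$. By construction this locus is $\overline{S_1} \cap \tilde{f}^{-1}(S_2)$, which is a union of $\Zcal_1$-strata (since $\tilde{f}^{-1}(S_2)$ is). Pick any such stratum $S_1' \subset \overline{S_1}$ with $\tilde{f}(S_1') \subset S_2$. Then $S_1'$ and $S_1$ both lie in the fibre of $\tilde{f}$ over $S_2$, and by the inclusion $S_1' \subset \overline{S_1}$ they are comparable in the closure order on $\GoneZip^{\Zcal_1}$. Theorem~\ref{discfib} forces any two strata in the same topological fibre of $\tilde{f}$ to be incomparable; hence $S_1' = S_1$. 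This shows $\overline{S_1} \cap \tilde{f}^{-1}(S_2) = S_1$, i.e.\ $h_1$ is a characteristic section for $S_1$, proving principal purity of the zip stratification of $\Zcal_1$.

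For the uniform version, one simply observes that the construction above produces $\Lscr_1 = \tilde{f}^*\Lscr_2$ as the line bundle hosting the characteristic section for $S_1$. If $\Lscr_2$ can be chosen uniformly in $S_2$, then $\Lscr_1$ does not depend on $S_1$ either, and the argument above gives a characteristic section of $\Lscr_1$ for every zip stratum of $\GoneZip^{\Zcal_1}$.

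No serious obstacle arises: Theorem~\ref{discfib} does the real work, and the only mild point to check is that continuity of $\tilde{f}$ gives $\overline{S_1} \subset \tilde{f}^{-1}(\overline{S_2})$ so that the pullback section can indeed be restricted to $\overline{S_1}$. The argument is entirely formal once the discrete fibres theorem is in hand.
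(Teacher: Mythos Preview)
Your proof is correct and is precisely the argument the paper leaves implicit (no proof is given in the paper; the corollary is treated as an immediate consequence of \Th~\ref{discfib}). The pullback of a characteristic section together with the discrete-fibers statement is exactly what is intended.
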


\begin{example}
We expand on the counter-example in \S\ref{counterex}. Take $G=Sp(J)$ as in (\ref{Jmat}) and $\mu:\GG_{m,k}\to G_k$ as in (\ref{cocharcounter}). There is a natural embedding $\iota:G\to GL_{6,\FF_p}$, and the cocharacter datum $(GL_{6,\FF_P},\iota\circ \mu)$ yields a zip datum $\Zcal'$ for $GL_{6,\FF_p}$. For $p=2$, \Cor~\ref{cor-f-pure} implies that the zip stratification of $\GLsixZip^{\Zcal'}$ is not principally pure.
\end{example}

\subsection{Corollaries of the discrete-fiber theorem}

\begin{cor}
Let $\Zcal=(G,P,L,Q,M,\varphi^n)$ be a zip datum and set $q:=p^n$. Assume $G$ admits an embedding $G\to GL_{N,\FF_p}$ defined over $\FF_p$. If $q\geq N$, then the zip stratification of $G$ is uniformly principally pure.
\end{cor}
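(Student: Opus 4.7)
The strategy is to reduce uniform principal purity of $\GZip^\Zcal$ to the analogous statement for a suitable zip datum $\Zcal'$ on $GL_N$ obtained by pushing forward along $\iota$, and then transport it back to $\Zcal$ via the discrete-fiber functoriality of \Cor~\ref{cor-f-pure}. After the harmless conjugation of \Prop~\ref{harmless}, I may assume $\Zcal=\Zcal_\mu$ arises from a cocharacter $\mu:\GG_{m,k}\to G_k$; set $\mu':=\iota\circ\mu$ and $\Zcal':=\Zcal_{\mu'}$ on $GL_N$. Since $\iota$ is $\FF_p$-rational it commutes with $\varphi^n$, and the limit characterization~\eqref{limparab} of the cocharacter parabolic immediately yields the inclusions $\iota(\square)\subset\square'$ for each of $\square\in\{P,L,U,Q,M,V\}$. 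Thus $\iota:\Zcal\to\Zcal'$ is a morphism of zip data whose kernel is trivial (hence central); by \Cor~\ref{cor-f-pure} it suffices to prove uniform principal purity of the zip stratification of $\Zcal'$, and by \Th~\ref{sectionszipstrata} this reduces to producing a $\Zcal'$-ample, orbitally $q$-close character $\chi'\in X^*(L')$.

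The standard maximal torus $T'\subset GL_N$ is $\FF_p$-split, so $\Gal(k/\FF_p)$ acts trivially on $X^*(T')$ and the orbital condition only concerns the Weyl factor $W'=S_N$. Up to $GL_N(k)$-conjugation $L'\cong GL_{n_1}\times\cdots\times GL_{n_r}$ is the block-diagonal Levi indexed by the $r$ distinct weights of $\mu'$ on $V=k^N$, so $r\leq N$. Take $\chi'=(c_1\mathbf 1_{n_1},\ldots,c_r\mathbf 1_{n_r})\in X^*(L')$ with $c_i=i$ (reversing the order if needed by the sign convention). Ampleness (\Rmk~\ref{rmk-ample}) reduces to strict monotonicity of $(c_s)$ at block boundaries, which is immediate; for the orbital condition, $S_N$ acts transitively on ordered pairs $(k,l)$ with $k\neq l$ and the nonzero pairings $\langle\chi',e_k-e_l\rangle=c_{b(k)}-c_{b(l)}$ lie in $\{-(r-1),\ldots,r-1\}$, so every orbital ratio is bounded above in absolute value by $r-1\leq N-1\leq q-1$, as required.

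The chief technical care lies in the first step, particularly if $\Zcal$ is not of cocharacter type (so $Q\neq\varphi^n(P_+(\mu))$ for any $\mu$ defining $(P,L)$): one must then separately choose $Q'\subset GL_N$ with Levi $M'=\varphi^n(L')$ containing $\iota(Q)$. The rest of the argument is the essentially combinatorial orbital bound in $GL_N$ made possible by the hypothesis $q\geq N$.
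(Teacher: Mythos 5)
Your overall strategy is the paper's own: reduce to $GL_N$ via \Cor~\ref{cor-f-pure} and then produce a $\Zcal'$-ample, orbitally $q$-close character, and your final bound $r-1\leq N-1\leq q-1$ is exactly the paper's $(r,r-1,\ldots,1)$ computation. The opening step, however, is flawed. \Prop~\ref{harmless} only permits $G(k)$-conjugation of $\Zcal$, and conjugation does not turn a general zip datum into a cocharacter one: a zip datum merely requires $Q$ to be \emph{some} parabolic with Levi $\varphi^n(L)$, whereas $\Zcal_\mu$ pins down $Q=\varphi^n(P_+(\mu))$. You flag this at the end, but your proposed patch (keep $L'$ the centralizer of $\iota\circ\mu$ and ``separately choose $Q'$ with Levi $\varphi^n(L')$ containing $\iota(Q)$'') can fail because no such $Q'$ need exist. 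For a concrete failure take $G=SL_2\times SL_2\hookrightarrow GL_4$ block-diagonally, $L=T$, $P$ the product of upper-triangular Borels, $\mu_i(t)=\diag(t^{-1},t)$, and $Q$ the upper Borel on the first factor times the lower Borel on the second. Then the centralizer of $\iota\circ\mu$ in $GL_4$ is $GL(\langle e_1,e_3\rangle)\times GL(\langle e_2,e_4\rangle)$; the only two parabolics of $GL_4$ with this Levi are the stabilizers of $\langle e_1,e_3\rangle$ and of $\langle e_2,e_4\rangle$, and neither contains both $I+aE_{12}$ and $I+bE_{43}$, hence neither contains $\iota(Q)$.

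The correct reduction must also adjust $L'$. Choose cocharacters $\lambda,\kappa$ of $T_k$ with $P=P_-(\lambda)$, $L$ the centralizer of $\lambda$, $Q=P_+(\kappa)$, $M$ the centralizer of $\kappa$. For sufficiently large and suitably generic integers $K,K'$, set $\rho:=K\lambda+\varphi^{-n}\kappa$ and $\sigma:=K'\kappa+\varphi^{n}\lambda$; then $P_-(\rho)=P$, $P_+(\sigma)=Q$, and the centralizers in $GL_N$ of $\iota\circ\varphi^n\rho$ and of $\iota\circ\sigma$ both reduce to the common centralizer of $\iota\circ\kappa$ and $\iota\circ\varphi^n\lambda$. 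Taking $P':=P_-(\iota\circ\rho)$ with its Levi $L'$, and $Q':=P_+(\iota\circ\sigma)$ with its Levi $M'$, one gets $M'=\varphi^n(L')$ and all the required inclusions, so $\iota:\Zcal\to\Zcal'$ is a morphism of zip data with trivial kernel. With this in place the rest of your argument, and the paper's $GL_N$ computation, goes through unchanged. (The paper's proof is terse at precisely this point, but it does not make the erroneous cocharacter-reduction claim.)
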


\begin{proof}
By \Cor~\ref{cor-f-pure}, it suffices to consider the case $G=GL_{N,\FF_p}$. We may assume that $L$ is the Levi subgroup corresponding to a decomposition $N=N_1+...+N_r$, given by diagonal blocks of size $N_1$,...,$N_r$. We identify a character of $L$ with a tuple $(a_1,...,a_r)\in \ZZ^r$. Consider the character $\lambda=(r,r-1,...,1)$. This character is $\Zcal$-ample and orbitally $q$-close for $q\geq r$, hence the result.
\end{proof}

The shortcoming of \Th~\ref{sectionszipstrata} is that the set of $\Zcal$-ample, orbitally $q$-close characters could be empty for small values of $p$. However, we have the following result:

\begin{corollary}\label{corHT}
Let $(G,\mu)$ be a maximal cocharacter datum over $\FF_p$, and let $\chi$ be a maximal character of $L$ (\Def~\ref{charHTdef}). Then there exists $N\geq 1$ such that the line bundle $\Vscr(N\chi)$ admits characteristic sections on all zip strata of $G$. In particular, the zip stratification of $\GZip^\mu$ is uniformly principally pure.
\end{corollary}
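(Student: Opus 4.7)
The plan is to reduce, via the defining representation of maximality, to the known $GL(V)$ case of \Cor~\ref{glnsections}, using \Th~\ref{discfib} to control how strata behave under pullback.

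\textbf{Step 1 (choice of model representation).} By the definition of maximality and of a maximal character, fix a representation with central kernel $r:G\to GL(V)$ over $\FF_p$ such that $\mu':=r_k\circ\mu$ has exactly two weights and such that $\chi=\chi'|_{P}$, where $\chi'\in X^*(P'_-)$ is the determinant character $x\mapsto\det(x|V_-)$ attached to the parabolic $P'_-\subset GL(V)_k$ defined by $\mu'$. Because $\mu'$ has only two weights, $P'_-$ is a maximal parabolic of $GL(V)_k$. Set $\Zcal':=\Zcal_{\mu'}$. Since $r$ is a morphism of cocharacter data with central kernel, it induces a morphism of zip data $f:\Zcal_\mu\to\Zcal'$ with central kernel, hence, by Lemma~\ref{functo}, a morphism of stacks
\begin{equation*}
\tilde f:\GZip^\mu\longrightarrow\GLnZip^{\Zcal'}.
\end{equation*}

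\textbf{Step 2 (sections on the $GL(V)$-side).} Since $P'_-$ is a maximal parabolic of $GL(V)$, \Cor~\ref{glnsections} applies to $\Zcal'$ and the $\Zcal'$-ample character $\chi'$ (after replacing $\chi'$ by its negative if necessary, which just amounts to replacing $V_-$ by $V_+$ in the construction and does not affect the conclusion). We obtain an integer $N\geq 1$ and, for every zip stratum $S'\subset GL(V)_k$, a characteristic section of $\Vscr(N\chi')$ on $\overline{S'}$.

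\textbf{Step 3 (pullback and discrete fibers).} By functoriality of the construction $\Vscr(\cdot)$ and the identity $\chi'\circ r|_L=\chi$, we have $\tilde f^*\Vscr(N\chi')=\Vscr(N\chi)$. Each $E_\Zcal$-orbit $S\subset G$ is mapped into a single $E_{\Zcal'}$-orbit $S'\subset GL(V)_k$: if $y=uxv^{-1}$ with $(u,v)\in E_\Zcal$, then $r(y)=r(u)r(x)r(v)^{-1}$ with $(r(u),r(v))\in E_{\Zcal'}$. Hence $\tilde f^{-1}(S')$ is a union of zip strata of $\GZip^\mu$, and by \Th~\ref{discfib} these strata are pairwise incomparable for the closure order.

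\textbf{Step 4 (conclusion).} Let $S\subset G$ be any zip stratum, and let $S'\subset GL(V)_k$ be the unique stratum with $\tilde f(S)\subset S'$. Pulling back a characteristic section $h_{S',\chi'}$ of $\Vscr(N\chi')$ on $\overline{S'}$ yields a section $\tilde f^*h_{S',\chi'}$ of $\Vscr(N\chi)$ on $\tilde f^{-1}(\overline{S'})\supseteq\overline{S}$. This pulled-back section is nonzero on $S$ (since $h_{S',\chi'}$ is nonzero on $S'\supset\tilde f(S)$), and vanishes on $\overline{S}\setminus S$: a point of $\overline{S}\setminus S$ lies in some stratum $S_1\prec S$; if $\tilde f(S_1)\subset S'$, then $S_1$ and $S$ both lie in $\tilde f^{-1}(S')$, contradicting the incomparability guaranteed by \Th~\ref{discfib}, so $\tilde f(S_1)\subset\overline{S'}\setminus S'$ and $h_{S',\chi'}$ vanishes there. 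Thus $\Vscr(N\chi)$ admits characteristic sections on every zip stratum of $\GZip^\mu$, with $N$ uniform in $S$; this is uniform principal purity.

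The only delicate point is the sign/ampleness normalization in Step 2, i.e.\ matching the convention of \Def~\ref{def-ample-character} against the geometric meaning of $\chi'=\det(\cdot|V_-)$ on the Grassmannian $GL(V)/P'_-$; this is a direct check using Remark~\ref{rmk-ample}. Everything else is a formal combination of the $GL_n$ case (\Cor~\ref{glnsections}) with the discrete-fiber theorem (\Th~\ref{discfib}).
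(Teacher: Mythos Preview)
Your proof is correct and follows the same route as the paper, which simply cites \Cor~\ref{cor-f-pure} and \Cor~\ref{glnsections}; you have unpacked the (unproven in the paper) \Cor~\ref{cor-f-pure} into the explicit pullback-plus-discrete-fibers argument of Steps~3--4. In fact your version is slightly more precise: \Cor~\ref{cor-f-pure} only asserts uniform principal purity without tracking the line bundle, whereas the first sentence of \Cor~\ref{corHT} makes a claim about $\Vscr(N\chi)$ specifically, and your identification $\tilde f^*\Vscr(N\chi')=\Vscr(N\chi)$ is exactly what is needed for that.
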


\begin{proof}
This follows immediately from \Cor~\ref{cor-f-pure} and \Cor~\ref{glnsections}.
\end{proof}

\begin{corollary}
Let $(G,\mu)$ be a cocharacter datum over $\FF_p$ and assume $\mu$ is minuscule. For $p>2$, the zip stratification of $\GZip^\mu$ is uniformly principally pure.
\end{corollary}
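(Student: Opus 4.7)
The plan is to invoke Theorem~\ref{sectionszipstrata} by constructing a $\Zcal$-ample, orbitally $p$-close character $\chi\in X^*(L)$, where $\Zcal=\Zcal_\mu$. The natural candidate is the Hodge character
\[
\chi := \sum_{\alpha \in \Phi^+ \setminus \Phi_L^+}\alpha \;=\; 2(\rho-\rho_L),
\]
with the sign chosen to fit the ampleness convention of Remark~\ref{rmk-ample}.

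The first step, $\Zcal$-ampleness of $\chi$ (or $-\chi$), follows directly from the standard identity $\langle \rho-\rho_L,\alpha_i^\vee\rangle=1$ for every simple root $\alpha_i\notin I$, combined with Remark~\ref{rmk-ample}.

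The heart of the proof is to establish orbital $p$-closeness of $\chi$ for $p>2$. Since $\mu$ is minuscule, $\langle\alpha,\mu\rangle\in\{-1,0,1\}$ for every root $\alpha$, and so $\chi = \sum_{\alpha:\langle\alpha,\mu\rangle=1}\alpha$ is stabilized by the Weyl group $W_L$ of $L$. This immediately forces $\langle\chi,\beta^\vee\rangle=0$ whenever $\beta\in\Phi_L$. For $\beta\notin\Phi_L$, the minuscule condition tightly constrains the pairings: the plan is to show that within each $W$-orbit of coroots the nonzero values $|\langle\chi,\beta^\vee\rangle|$ are comparable up to a factor of $2$. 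Concretely, when $G$ is simple, $P_\mu$ is maximal and $\chi$ is a positive rational multiple of a minuscule fundamental weight, so the bound follows from the known pairings of a minuscule fundamental weight with coroots; the semisimple and reductive cases then reduce to simple factors. For $p\geq 3$ the inequality $p-1\geq 2$ then yields
\[
\Bigl|\tfrac{\langle\chi,\sigma\beta^\vee\rangle}{\langle\chi,\beta^\vee\rangle}\Bigr|\leq p-1
\]
for all $\sigma\in W\rtimes\Gal(k/\FF_p)$ and all $\beta$ with nonzero denominator, which is exactly orbital $p$-closeness. Theorem~\ref{sectionszipstrata} then yields a $d\geq 1$ such that, for every $E$-orbit $S\subset G$, the line bundle $\Vscr(d\chi)$ admits a characteristic section on $[E\backslash\overline{S}]$; this is precisely uniform principal purity of the zip stratification of $\GZip^\mu$.

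The main obstacle is the uniform ratio bound in the orbital $p$-close verification. It can certainly be checked case-by-case across the simple types admitting nontrivial minuscule coweights (types $A_n$, $B_n$ with first-node coweight, $D_n$, $E_6$, $E_7$, the remaining simple types having no nontrivial minuscule coweight), but a clean conceptual proof should exploit the identification of $\rho-\rho_L$ with a positive rational multiple of a minuscule fundamental weight, for which the pairings with coroots of a fixed length are uniform up to sign. The strictness $p>2$ is precisely what is needed to absorb the factor $2$ that appears when $G$ is of non-simply-laced type $B_n$.
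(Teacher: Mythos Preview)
Your approach is correct in outline but takes a genuinely different route from the paper. The paper does \emph{not} work directly on $G$: instead it applies the adjoint representation $\Ad:G\to GL(\gfr)$, observes that $\Ad\circ\mu$ has weights in $\{-1,0,1\}$ because $\mu$ is minuscule, and then invokes \Cor~\ref{cor-f-pure} (the discrete-fiber theorem) to reduce to a zip datum for $GL(\gfr)$ whose parabolic stabilizes a two-step flag $0\subset V_1\subset V_2\subset V$. For such a parabolic the character $(3,2,1)$ on the three-block Levi is $\Zcal$-ample and has orbital ratio at most $2$, hence is orbitally $3$-close; this is where $p>2$ enters. No case analysis over Dynkin types is needed.

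Your direct approach via $\chi=2(\rho-\rho_L)$ also works, but a few details are off. First, your list of simple types with a nontrivial minuscule cocharacter omits $C_n$, where $\omega_n^\vee$ is minuscule. Second, the assertion that $\rho-\rho_L$ is a rational multiple of a \emph{minuscule} fundamental weight fails in the non-simply-laced cases: for $B_n$ with $\mu=\omega_1^\vee$ one gets $\chi=2\omega_1$, and $\omega_1$ is only quasi-minuscule; similarly for $C_n$ with $\mu=\omega_n^\vee$ one gets $\chi=2\omega_n$, not minuscule. Third, your diagnosis of the factor $2$ is misplaced: a direct computation in each simple type shows that within any single $W$-orbit of coroots the nonzero values of $\langle\chi,\beta^\vee\rangle$ all have the \emph{same} absolute value, so the orbital ratio is at most $1$ (and $\Gal$ preserves root lengths, so this persists for $W\rtimes\Gal$). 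Thus your $\chi$ is in fact orbitally $p$-close for \emph{all} $p\geq 2$, and the hypothesis $p>2$ is not actually forced by your argument. The paper's argument, by contrast, genuinely uses $p>2$ because the reduction to a three-block Levi in $GL_N$ produces orbital ratio $2$.

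In summary: your strategy is sound and arguably yields a slightly sharper statement, but it trades the paper's clean functorial reduction for a type-by-type verification that you have only sketched and whose justification contains the inaccuracies above.
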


\begin{proof}
Consider the adjoint representation $\Ad:G\to GL(\gfr)$ where $\gfr$ is the Lie algebra of $G$. Since $\mu$ is minuscule, the cocharacter $\Ad \circ \mu:\GG_{m,k}\to GL(\gfr)_k$ has only weights $-1,0,1$. Hence we reduce to the case of $GL(V)$ (for a finite-dimensional $\FF_p$-vector space $V$) and a zip datum whose attached parabolic $P$ is the stabilizer of a flag $0\subset V_1 \subset V_2 \subset V$. It is clear that $P$ admits an $\Zcal$-ample orbitally $3$-close character, which proves the result.
\end{proof}

\subsection{Functoriality of zip flags}
Consider two zip data $\Zcal=(G,P,L,Q,M,\varphi^n)$ and $\Zcal'=(G',P',L',Q',M',\varphi^n)$ of exponent $n$. Choose parabolic subgroups $P_0\subset P$ and $P'_0\subset P'$, together with Levi subgroups $L_0 \subset P_0$ and $L'_0 \subset P'_0$ satisfying $L_0\subset L$ and $L'_0\subset L'$. Denote by $U_0$ and $U'_0$ the unipotent radicals of $P_0$ and $P'_0$ respectively.

Let $f:\Zcal\to \Zcal'$ be a morphism of zip data satisfying the additional conditions:
\begin{enumerate}[(i)]
\item $f(L_0) \subset L'_0$
\item $f(U_0)\subset U'_0$.
\end{enumerate}
For example, these conditions are satisfied if $(P_0,L_0)$ and $(P'_0,L'_0)$ are attached to cocharacters $\mu_0:\GG_{m,k}\to G_k$ and $\mu'_0=f\circ \mu_0$ respectively (\S\ref{subsec cocharzipdata}). The map $f$ sends the subgroup $E_\Zcal\subset G_k\times G_k$ to $E_{\Zcal'}\subset G'_k\times G'_k$. Since $f(P_0)\subset P'_0$, we also have $f(\hat{E}^{\Zcal}_{P_0})\subset \hat{E}^{\Zcal'}_{P'_0}$. Furthermore, define zip data $\Zcal_{0}:=(G,P_0,L_0,Q_0,M_0,\varphi^n)$ and $\Zcal'_{0}:=(G',P'_0,L'_0,Q'_0,M'_0,\varphi^n)$ as in (\ref{zipparab}). It follows from our assumptions that $f$ induces a morphism of zip data $\Zcal_{0}\to\Zcal'_{0}$ and a commutative diagram:

$$\xymatrix@1@M=5pt{
\GZip^{\Zcal_{0}} \ar[r]^{\tilde{f}^{\Zcal_0}}  &    \GpZip^{\Zcal'_{0}} \\
\GF^{(\Zcal,P_0)} \ar[u]\ar[u]_{\Psi^{\Zcal}_{P_0}} \ar[r]^{\tilde{f}_{P_0}} \ar[d]^{\pi_{P_0}} & \GpF^{(\Zcal',P'_0)} \ar[d]^{\pi_{P'_0}} \ar[u]_{\Psi^{\Zcal}_{P_0}}  \\
\GZip^{\Zcal} \ar[r]^{\tilde{f}}  & \GpZip^{\Zcal'}
}$$

Since the diagram commutes, the map $\tilde{f}_{P_0}$ induces a map between the fibers of $\pi_{P_0}$ and $\pi_{P'_0}$. These fibers can be identified with $P/P_0$ and $P'/P'_0$ respectively, and the map $\tilde{f}_{P_0}$ then identifies with the natural map $P/P_0 \to P'/P'_0$ induced by $f$.

\section{Flag spaces and stratifications}\label{sec flagsp}

\subsection{Shimura varieties of Hodge-type}

Let $(\GG,X)$ be a Shimura datum of Hodge type with reflex field $E\subset \CC$, given by a reductive group $\GG$ over $\QQ$ and a $\GG(\RR)$-conjugacy class of morphisms $\SS\to \GG_\RR$. For $h_0\in X$, define $\mu_0:\GG_{m,\CC}\to \GG_{\CC}$ as $\mu_0(z)=h_{0,\CC}(z,1)$ using the identification $\SS_\CC\simeq \GG_{m,\CC}\times \GG_{m,\CC}$ given by $z\mapsto (z,\overline{z})$ on $\RR$-points.

We choose an integer $m\geq 1$ and a sufficiently small compact open subgroup $K\subset \GG(\AA_f)$ such that the following properties hold:

\begin{enumerate}
\item There is a reductive model $\Gcal$ of $\GG$ over $\ZZ\left[\frac{1}{m}\right]$.
\item The Shimura variety $\mathbb{S}_K :=Sh_K(\GG,X)$ admits a smooth integral model $\Scal_K$ over $R:=\Ocal_E\left[\frac{1}{m}\right]$. This model can be obtained by glueing the $\Ocal_{E,\pfr}$-models of \cite{Vasiu-Preabelian-integral-canonical-models} and \cite{Kisin-Hodge-Type-Shimura} for primes $\pfr$ of good reduction.
\item There is a cocharacter $\mu$ of $\Gcal_R$ such that $\mu_\CC$ lies in the conjugacy class of $\mu_0$.
\end{enumerate}

For each prime $\pfr$ in $R$ lying above a prime number $p$, we obtain a cocharacter datum $(G,\mu)$ where $G:=\GG \otimes \FF_p$. 

Let $f:\Ascr \to \Scal_K$ be the universal abelian scheme obtained by pull-back from the Siegel case. For a prime $\pfr$ of $R$, define $S_K:=\Scal_K \otimes R/\pfr$ the special fiber of $\Scal_K$ and $A:=\Ascr\otimes R/\pfr$. 
The de Rham cohomology $H^1_{dR}(A / S_K)$ together with its Hodge filtration and conjugate filtration naturally defines a
$G$-zip over $S_K$ (\cite[\Th~2.4.1]{ZhangEOHodge}). This induces a smooth morphism of stacks (\cite[\Th~3.1.2]{ZhangEOHodge})
\begin{equation}
\zeta:S_K \longrightarrow \GZip^\mu.
\end{equation}
The geometric fibers of $\zeta$ are the Ekedahl-Oort strata of $S_K$. See the introduction of \cite{Goldring-Koskivirta-Strata-Hasse} for further references and
remarks about $\zeta$. The Hodge line bundle of $\Scal_K$ is defined as
\begin{equation}
\omega:=f_*(\det(\Omega_{\Ascr/\Scal_K})).
\end{equation}
It is ample on $\Scal_K$. Denote by $\PP\subset \GG_{\overline{\QQ}}$ the parabolic subgroup attached to $(\GG,\mu_E)$. For a character $\lambda \in X^*(\PP)$, there is an automorphic line bundle $\Vcal_K(\lambda)$ on $\Scal_K$ attached to $\lambda$ and its special fiber coincides with the line bundle $\zeta^*(\Vcal(\lambda))$. Furthermore, there exists a character $\eta_\omega\in X^*(\PP)$ such that $\omega=\Vcal(\eta_\omega)$.

\subsection{Flag spaces}
Choose a parabolic subgroup $\PP_0\subset \PP$. We can find a finite extension $E\subset E_0 \subset \CC$ such that $\PP$ and $\PP_0$ admit models $\Pcal$ and $\Pcal_0$ over $R_0:=\Ocal_{E_0}\left[\frac{1}{m}\right]$ and such that all automorphic line bundles are defined over $R_0$.

The scheme $\Scal_K$ carries a universal $\Pcal$-torsor $\Pfr$ arising from the Hodge filtration of the de Rham cohomology of $\Ascr$, as defined in \cite[\Prop~5.3.4]{MadapusiHodgeTor} (see also \S2.1.5 in \cite{Goldring-Koskivirta-Strata-Hasse}). Then the quotient of $\Pfr$ by the group $\Pcal_0$ defines a smooth $\Pcal/\Pcal_0$-bundle
\begin{equation}
\pi : \Fcal l_K \longrightarrow \Scal_K\otimes R_0.
\end{equation}
It generalizes the flag space considered by Ekedahl and Van der Geer in \cite{EkedahlGeerEO} and by the authors in \cite{Goldring-Koskivirta-Strata-Hasse}. We call $\Fcal l_{K}$ the flag space attached to the parabolic $\PP_0$. Define $\mathbb{F}l_K:=\Fcal l_K \otimes E_0$. For a given prime $\pfr_0 \subset R_0$ lying above $\pfr\subset R$ and $p\in \ZZ$, write $Fl_K$ for the reduction of $\Fcal l_{K}$ modulo $\pfr_0$. As explained in \cite{Goldring-Koskivirta-Strata-Hasse} (10.3.3), we have a Cartesian diagram:
$$\xymatrix@1@M=5pt{
Fl_K \ar[r]^-{\zeta_{P_0}} \ar[d]_-{\pi} & \GF^{(\Zcal,P_0)} \ar[d]^-{\pi_{P_0}} \\
S_K \ar[r]_-{\zeta} & \GZip^{\Zcal}
}$$

A character $\lambda\in X^*(\PP_0)$ naturally gives rise to a line bundle $\Lcal_K(\lambda)$ on $\Fcal l_K$ (see \cite{jantzen-representations} \S5.8). On the special fiber, one has the following formula:
\begin{equation}
\Lscr_K(\lambda) = \zeta_{P_0}^*(\Lscr_{P_0}(\lambda)).
\end{equation}

\subsection{Stratifications}

We define a stratification on $F l_{K}$ by pulling back fine flag strata of $\GF^{(\Zcal,P_0)}$ along the smooth morphism $\zeta_{P_0}$. Equivalently, we define strata as the fibers of the composition
\begin{equation}
F l_{K} \xrightarrow{\zeta_{P_0}} \GF^{(\Zcal, P_0)} \xrightarrow{\Psi_{P_0}} \GZip^{\Zcal_0}.
\end{equation}

For $w\in {}^{I_0}W\cup W^{J_0}$, define the corresponding fine flag strata as:
\begin{equation}
F l_{w}:=\zeta_{P_0}^{-1}\left( Z_{P_0,w}\right)
\end{equation}
endowed with the reduced structure. Similarly, for $w\in {}^{I}W\cup  W^{J}$, define Ekedahl-Oort strata of $S_K$ by:
\begin{equation}
S_{w}:=\zeta^{-1}\left( Z_{P,w}\right).
\end{equation}
For $w\in {}^{I_0}W^{J_0}$, define the coarse flag strata by
\begin{equation}
\FF_{w}:=\zeta_{P_0}^{-1}\left( \ZZ_{P_0,w}\right).
\end{equation}
The following is a simple consequence of the results in sections \ref{sec stofzf},\ref{sec ffs}:

\begin{proposition} \
\begin{enumerate}
\item For $w\in {}^{I_0}W\cup W^{J_0}$, the fine flag stratum $F l_{w}$ is smooth of dimension $\ell(w)$.
\item For $w\in {}^{I_0} W^{J_0}$, the coarse flag stratum $\FF_{w}$ is smooth of dimension $\ell(w)+\ell(w_{0,J_0})-\ell(w_{0,I_w})-\dim(P_0)$, where $I_w:=J_0\cap w^{-1}I_0 w$.
\item The Zariski closure of a fine (resp. coarse) stratum is a union of fine (resp. coarse) strata.
\item The image by $\pi^{*}_{P_0}$ of a fine stratum is a union of Ekedahl-Oort strata.
\item For $w\in {}^{I_0} W \cup W^{J_0}$ one has $\pi^{*}_{P_0}(F l_{w})=S_{w}$ and the map $\pi:F l_{w}\to S_{w}$ is finite.
\end{enumerate}
\end{proposition}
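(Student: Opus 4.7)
The entire proposition reduces to the corresponding stack-theoretic facts established in Sections~\ref{sec stofzf} and~\ref{sec ffs}, pulled back via the Cartesian square displayed just above the statement. The pivotal observation I will exploit is that $\zeta_{P_0}$ is smooth of relative dimension $\dim S_K = \dim G - \dim P$, being the base change of Zhang's smooth morphism $\zeta$ along the smooth projection $\pi_{P_0}:\GF^{(\Zcal,P_0)}\to \GZip^{\Zcal}$; the text preceding the proposition already records its smoothness.

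For parts (1) and (2), I will invoke the smoothness and dimension of the fine and coarse strata in $\GF^{(\Zcal,P_0)}$ (proved after \Prop~\ref{affinebun} and in \Prop~\ref{coarseprop} respectively). Since smoothness is preserved by smooth pullback, $Fl_w = \zeta_{P_0}^{-1}(Z_{P_0,w})$ and $\FF_w = \zeta_{P_0}^{-1}(\ZZ_{P_0,w})$ are smooth over $k$. The dimensions drop out of the relative-dimension formula for smooth morphisms: in (1),
\begin{equation*}
\dim Fl_w = \dim Z_{P_0,w} + \dim S_K = (\ell(w)+\dim P-\dim G)+(\dim G-\dim P)=\ell(w),
\end{equation*}
and part (2) is analogous, reading off $\dim\ZZ_{P_0,w}$ from \Prop~\ref{coarseprop}~(3).

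Part (3) is purely topological: the fine, resp.\ coarse, stratifications of $\GF^{(\Zcal,P_0)}$ satisfy the closure property by construction (see Section~\ref{subsec def fine strata} and \Prop~\ref{coarseprop}), and this property is preserved under the continuous preimage by $\zeta_{P_0}$.

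For (4) and (5), the Cartesian square gives
\begin{equation*}
\pi(Fl_w)=\pi\bigl(\zeta_{P_0}^{-1}(Z_{P_0,w})\bigr)=\zeta^{-1}\bigl(\pi_{P_0}(Z_{P_0,w})\bigr).
\end{equation*}
Invoking \Prop~\ref{propimagemin} for the pair $P_0\subset P$ yields $\pi_{P_0}(Z_{P_0,w})=Z_{P,w}$, hence $\pi(Fl_w)=\zeta^{-1}(Z_{P,w})=S_w$. Finiteness of $\pi:Fl_w\to S_w$ then follows from finiteness of $\pi_{P_0}:Z_{P_0,w}\to Z_{P,w}$ (same proposition), since finite morphisms of stacks are stable under base change along $\zeta$. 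I do not anticipate any essential obstacle here: the argument is pure bookkeeping that compiles already-proved stack-level statements using the smoothness and Cartesian-ness of the square.
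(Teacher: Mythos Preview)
Your approach is exactly the paper's: the proposition is stated as ``a simple consequence of the results in sections~\ref{sec stofzf},\ref{sec ffs}'', and you correctly identify the smooth Cartesian square with $\zeta_{P_0}$ as the mechanism for transporting the stack-level statements to $Fl_K$.

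There is one small gap. You treat (4) and (5) together via the identity $\pi(Fl_w)=\zeta^{-1}(\pi_{P_0}(Z_{P_0,w}))$ and then invoke \Prop~\ref{propimagemin}. But \Prop~\ref{propimagemin}, applied to the pair $P_0\subset P$, requires $w\in{}^{I}W\cup W^{J}$ (the type of the \emph{larger} parabolic), so it only handles the $P$-minimal/cominimal case, i.e.\ part~(5). For part~(4), which concerns an arbitrary fine stratum $Fl_w$ with $w\in{}^{I_0}W$, you need instead the (unnumbered) proposition in \S\ref{subsec def fine strata} stating that $\pi_{P_1,P_0}$ sends a fine stratum to a union of fine strata; combined with your displayed identity this gives (4). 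Once you separate these two citations, the argument is complete.
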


We now prove \Th~3 of the introduction, which states that for large $p$, the fine strata $F l_{w}$ of the flag space $Fl_{K}$ are affine and define a uniformly principally pure stratification for large $p$, which generalizes \cite[\Prop~10.5(ii)]{EkedahlGeerEO} in the Siegel case.

\begin{theorem}\label{thmshim}
There exists an integer $N$ depending on $\GG,X$ and $K$ such that for all $p\geq N$, the Shimura variety $Sh_K(\GG,X)$ has good reduction at all places $\pfr|p$ and the following holds:
\begin{enumerate}
\item\label{item-thmunif} The fine stratification of $F l_{K}$ is uniformly principally pure.
\item \label{item-affine} If the closure $\overline{S}$ of a fine flag stratum $S\subset F l_{K}$  is proper, then $S$ is affine.
\end{enumerate}
\end{theorem}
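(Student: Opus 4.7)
The plan is to combine Theorem~\ref{thmonw} with the ampleness of the Hodge bundle $\omega$ on $\Scal_K$, pulling the resulting sections back along the smooth morphism $\zeta_{P_0}\colon F l_K \to \GF^{(\Zcal,P_0)}$. Part (1) is classical: the integer $m$ in the setup of $\Scal_K$ is fixed, and for any $N\geq m+1$ any prime $p\geq N$ avoids the bad primes, so $\Gcal$ is reductive over $\ZZ_{(p)}$ and $\Scal_K$ is smooth over $\Ocal_{E,\pfr}$ for every $\pfr\mid p$.

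For (2), fix a character $\lambda_0\in X^*(L_0)$ that is $\Zcal_0$-ample (Definition~\ref{defample}). Such $\lambda_0$ exists because the $\Zcal_0$-ample characters form a nonempty open cone in $X^*(L_0)\otimes\RR$ (anti-ample line bundles on the projective variety $G/\varphi^{-n}(Q_0)$), and the defining positivity conditions on coroots (Remark~\ref{rmk-ample-case}) depend only on the root datum, independent of $p$. The ratios $|\langle\lambda_0,\sigma\alpha^\vee\rangle/\langle\lambda_0,\alpha^\vee\rangle|$ appearing in Definition~\ref{def-orb-p-close} are rational numbers depending only on $\lambda_0$ and the root datum, so for $p$ beyond a constant $C(\lambda_0,G)$, $\lambda_0$ is orbitally $q$-close. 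Theorem~\ref{thmonw} then yields uniform principal purity of the fine stratification on $\GF^{(\Zcal,P_0)}$, and pulling back along the smooth $\zeta_{P_0}$ transfers this to $F l_K$.

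For (3), assume $\overline{S}$ is proper. Being a closed subscheme of the quasi-projective variety $F l_K$, it is projective. By Serre's affineness criterion it suffices to exhibit an ample line bundle on $\overline{S}$ carrying a section whose zero locus is exactly $\overline{S}\setminus S$. I would take $\chi_N := \lambda_0 + N\eta_\omega$, viewed as a character of $L_0$ via restriction of $\eta_\omega\in X^*(P)$. Three properties need to be verified for $N$ and $p$ sufficiently large. First, $\chi_N$ is $\Zcal_0$-ample: by Remark~\ref{rmk-ample-case} this is a sign condition on coroots, and the Hodge character $\eta_\omega$ vanishes on coroots in $I$ (being central on the Levi of $P$) while having non-positive sign on $\Phi^+\setminus\Phi_L^+$ (the Griffiths-Schmid positivity, cf.\ Remark~\ref{rmk-GS}). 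Second, $\chi_N$ is orbitally $q$-close for $p$ large. Third, $\Lscr_K(\chi_N)=\Lscr_K(\lambda_0)\otimes \pi^*\omega^N$ is ample on $\overline{S}$: $\Lscr_K(\lambda_0)$ is $\pi$-ample (the $I\setminus I_0$ positivity forces ampleness on the fibers $P/P_0$), and $\omega|_{\pi(\overline{S})}$ is ample on the projective image $\pi(\overline{S})\subseteq S_K$, so the standard relative-ampleness argument for projective morphisms applied to $\overline{S}\to \pi(\overline{S})$ shows $\Lscr_K(\chi_N)|_{\overline{S}}$ is ample for $N$ large. Theorem~\ref{thmonw} applied to $\chi_N$ then furnishes a characteristic section of some power of $\Lscr_K(\chi_N)$ on $\overline{S}$ whose zero locus is $\overline{S}\setminus S$; Serre's criterion concludes that $S$ is affine.

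The main obstacle is verifying the first of these three properties, namely the sign of $\eta_\omega$ on coroots in $\Phi^+\setminus\Phi_L^+$. Equivalently, one needs $\eta_\omega$ to lie in the closure of the $\Zcal$-ample cone; this is the Griffiths-Schmid positivity for Hodge-type Shimura data and requires an explicit analysis of the Hodge character in relation to the cocharacter $\mu$.
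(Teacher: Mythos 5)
Your proposal tracks the paper's proof essentially step for step: the paper also derives (1) directly from Theorem~\ref{thmonw}, and for (2) it likewise picks a $\Zcal_0$-ample character $\eta$, observes via Remark~\ref{rmk-ample-case} that $\Lscr_K(\eta)$ is $\pi$-relatively ample, twists by $(\pi^*\omega)^m$ to make $\Rcal=\Lscr_K(\eta+m\eta_\omega)$ ample, notes that $\eta+m\eta_\omega$ is still $\Zcal_0$-ample and (for $p$ large) orbitally $q$-close, and then reapplies Theorem~\ref{thmonw} to obtain a characteristic section of the ample line bundle. Your closing remark correctly isolates the one unstated ingredient: the claim that $\eta+m\eta_\omega$ remains $\Zcal_0$-ample rests precisely on $\langle\eta_\omega,\alpha^\vee\rangle\le 0$ for $\alpha\in\Phi^+\setminus\Phi_L^+$ (plus vanishing on $I$-coroots), i.e.\ the Griffiths-Schmid positivity of the Hodge character, which the paper uses implicitly without comment.
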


\begin{proof}
Property \eqref{item-thmunif} is a simple consequence of \Th~\ref{thmonw}, so it remains to prove \eqref{item-affine}. Choose an $\Zcal_0$-ample character $\eta\in X^*(\PP_0)$. By \Rmk~\ref{rmk-ample-case}, the line bundle $\Lscr(\eta)$ is relatively ample with respect to the morphism $\pi:\Fcal l_K\to \Scal_K$. By standard arguments, there exists an integer $m\geq 1$ such that the line bundle
\begin{equation}\label{Rlb}
\Rcal:=\Lscr_K(\eta) \otimes (\pi^*\omega)^m=\Lscr_K(\eta+m\eta_\omega)
\end{equation}
is ample on $\Fcal l_K$. The character $\eta+m\eta_\omega$ is $\Zcal_0$-ample and for large prime numbers $p$, it is also $p$-small. Hence there exists an integer $N$ such that for all primes $p\geq N$, the ample line bundle $\Rcal$ admits characteristic sections on all fine flag strata, again by \Th~\ref{thmonw}. This concludes the proof.
\end{proof}

\subsection{Is there an analogue of $\GZip^{\Zcal_0}$ at the level of Shimura varieties?}
\label{sec-analogue-shimura} We have seen that the flag spaces $Fl_K$ are counterparts of the stacks $\GFZP$ in the context of Shimura varieties. Likewise, it would be interesting to construct a counterpart $S^{?}_{P_0}$ to the stack $\GZip^{\Zcal_0}$. Note that $\Zcal_0$ is usually not a zip datum arising from a cocharacter datum. The properties that should be satisfied by the scheme $S^{?}_{P_0}$ are the following: There should exist natural morphisms $\Psi^{?}_{P_0}: F l_{K} \to S^{?}_{K}$ and $\zeta^{?}:S^{?}_{P_0} \to \GZip^{\Zcal_0}$ (i.e $S^{?}_{P_0}$ should carry a universal $G$-zip of type $\Zcal_0$). These maps should give rise to a commutative diagram

$$\xymatrix@1@M=5pt{
 S^{?}_{K} \ar[r]^-{\zeta^{?}} &  \GZip^{\Zcal_0} \\
F l_{K} \ar[u]^{\Psi^{?}_{P_0}} \ar[r]^-{\zeta_{P_0}} \ar[d]_-{\pi^{*}_{P_0}} & \GF^{(\Zcal, P_0)} \ar[d]^-{\pi_{P_0}} \ar[u]_{\Psi_{P_0}}  \\
S_K \ar[r]_-{\zeta}& \GZip^{\Zcal}
}$$
such that both squares are Cartesian, that is
$$Fl_{K} \simeq S^{?}_{K} \times_{\GZip^{\Zcal_0}}\GF^{(\Zcal, P_0)}.$$
In particular, the map $\Psi^{?}_{P_0}$ should be an affine bundle with fibers isomorphic to $\AA^r$ where $r=\dim(G/P_0)$. 

As mentioned in the introduction, one of the main aspects of our program initiated in \cite{Goldring-Koskivirta-Strata-Hasse} is the analogy/connection between Griffiths-Schmid manifolds and stacks of $G$-Zips, see esp. \S1.4 of \loccit More precisely, it appears that stacks of $G$-Zips may be viewed as characteristic $p$ analogues of Mumford-Tate domains. Recall that Mumford-Tate domains are the Hodge-theoretic generalizations of period domains which admit Griffiths-Schmid manifolds as arithmetic quotients.

This analogy may shed some light on the problem of constructing a pair $(S_K^?, \Psi^?_{P_0})$ as above. 
Specifically, in \cite{Green-Griffiths-Kerr-CBMS-Texas}, Green-Griffiths-Kerr introduce and study several complex manifolds associated with a Mumford-Tate domain $D$. In the terminology and notation of \loccit (see esp. \Chap~6), these include the correspondence space $\Wcal$, the incidence variety $\Jcal$ and the cycle space $\Ucal$. For more on the cycle space, 
see \cite{Fels-Huckleberry-Wolf-book}. 

The "basic diagram" of \cite[\Chap~6]{Green-Griffiths-Kerr-CBMS-Texas} is: 
$$ \xymatrix{& \Wcal \ar[ldd] \ar[rdd] \ar[d] & \\ & \Jcal \ar[ld] \ar[rd] & \\ D& & \Ucal}.
$$
It is reminiscent of the desired diagram above for the hypothetical pair $(S_K^?, \Psi^?_{P_0})$. For example, it is suggestive that, as explained in \loccitn, the maps $\Wcal \to D$ and $\Wcal \to \Jcal$ are contractible. This should be compared with the above desideratum that $\Psi_{P_0}^?$ be an affine bundle.

\bibliographystyle{amsalpha}
\bibliography{biblio_overleaf}

\end{document}